\DeclareMathOperator*{\argmax}{arg\!\max}
\DeclareMathOperator*{\argmin}{arg\!\min}
\DeclareMathOperator*{\rmse}{RMSE}
\DeclareMathOperator*{\snr}{SNR}
\DeclareMathOperator*{\mae}{MAE}
\DeclareMathOperator*{\Var}{Var}
\definecolor{mygreen}{rgb}{0,0.40,0}
\title{Linear Support Vector Regression with Linear Constraints\thanks{Submitted.\funding{This work was partly supported by ANR GraVa ANR-18-CE40-0005, Projet ANER RAGA G048CVCRB-2018ZZ and INSERM Plan cancer 18CP134-00.}}}
\author{
    Quentin Klopfenstein\thanks{Institut Mathématique de Bourgogne, Université de Bourgogne, Dijon, France (\email{quentin.klopfenstein@u-bourgogne.fr}).}
    \and
    Samuel Vaiter\thanks{CNRS \& Institut Mathématique de Bourgogne, Université de Bourgogne, Dijon, France (\email{samuel.vaiter@u-bourgogne.fr}).}}
\date{}
\begin{document}

\maketitle

\begin{abstract}
    This paper studies the addition of linear constraints to the Support Vector Regression (SVR) when the kernel is linear.
    Adding those constraints into the problem allows to add prior knowledge on the estimator obtained, such as finding probability vector or monotone data. 
    We propose a generalization of the Sequential Minimal Optimization (SMO) algorithm for solving the optimization problem with linear constraints and prove its convergence. 
    Then, practical performances of this estimator are shown on simulated and real datasets with different settings: non negative regression, regression onto the simplex for biomedical data and isotonic regression for weather forecast.
\end{abstract}

\begin{keywords}
  Support Vector Machine, Support Vector Regression, Sequential Minimal Optimization
\end{keywords}

\begin{AMS}
  90C25, 49J52
\end{AMS}
                              
\section{Introduction}
The Support Vector Machine (SVM) \cite{Vapnik_SVM} is a class of supervised learning algorithms that have been widely used in the past 20 years for classification tasks and regression. 
These algorithms rely on two main ideas: the first one is the maximum margin hyperplane which consists in finding the hyperplane that maximises the distance between the vectors that are to be classified and the hyperplane.
The second idea is the kernel method that allows the SVM to be used to solve non-linear problems. The technic is to map the vectors in a higher dimensional space which is done by using a positive definite kernel, then a maximum margin hyperplane is computed in this space which gives a linear classifier in the high dimensional space. In general, it leads to a non-linear classifier in the original input space.

\paragraph{From SVM to Support Vector Regression}
Different implementations of the algorithms haven been proposed such as C-SVM, $\nu$-SVM \cite{Scholkopf}, Least-Squares SVM \cite{LSsvm}, Linear Programming SVM \cite{lpsvm} among others. Each of these versions have their strenghs and weaknesses depending on which application they are used. They differ in terms of constraints considered for the hyperplane (C-SVM  and Least-Squares SVM), in terms of norm considered on the parameters (C-SVM and Linear Programming SVM) and in terms of optimization problem formulation (C-SVM and $\nu$-SVM). Overall, these algorithms are a great tool for classification tasks and they have been used in many different applications like facial recognition \cite{facial}, image classification \cite{image}, cancer type  classification \cite{cancer}, text categorization \cite{text} to only cite a few examples. Even though, SVM was first developped for classification, an adaptation for regression estimation was proposed in \cite{SVR} under the name Support Vector Regression (SVR).
In this case, the idea of maximum margin hyperplane is slightly changed into finding a tube around the regressors. The size of the tube is controlled by a hyperparameter chosen by the user: $\epsilon$.
This is equivalent to using an $\epsilon$-insensitive loss function, $|y-f(x)|_{\epsilon} = \max\{0, |y-f(x)|-\epsilon\}$ which only penalizes the error above the chosen $\epsilon$ level. 
As for the classification version of the algorithm, a $\nu$-SVR method exists. 
In this version, the hyperparameter $\epsilon$ is computed automatically but a new hyperparameter $\nu$ has to be chosen by the user which controls asymptotically the proportions of support vectors \cite{Scholkopf}. SVR has proven to be a great tool in the field of function estimation for many different applications: predicting times series in stock trades \cite{timeseries}, travel-time prediction \cite{traveltime} and for estimating the amount of cells present inside a tumor \cite{cibersort}.

\paragraph{Incorporating priors}
In this last example of application, the authors used SVR to estimate a vector of proportions, however the classical SVR estimator does not take into account the information known about the space in which the estimator lives. Adding this prior information on the estimator may lead to better estimation performance. Incorporating information in the estimation process is a wide field of studies in statistical learning (we refer to Figure 2 in~\cite{priorSVR} for a quick overview in the context of SVM). 
A growing interest in prior knowledge incorporated as regularization terms has emerged in the last decades. Lasso \cite{lasso}, Ridge \cite{ridge}, elastic-net \cite{elasticnet} regression are examples of regularized problem where a prior information is used to fix an ill-posed problem or an overdetermined problem. The $\ell_{1}$ regularization of the Lasso will force the estimator to be sparse and bring statistical guarantees of the Lasso estimator in high dimensional settings. 
Another commun way to add prior knowledge on the estimator is to add constraints known a-priori on this estimator. The most commun examples are the ones that constrain the estimator to live in a subspace such as Non Negative Least Squares Regression (NNLS) \cite{Lawson}, isotonic regression \cite{isotonic}. These examples belong to a more general type of constraints: linear constraints. Other types of constraints exist like constraints on the derivative of the function that is to be estimated, smoothness of the function for example. Adding those constraints on the Least Squares estimator has been widely studied \cite{isotonic,inequality_LS, NNLS_alg} and similar work has been done for the Lasso estimator in \cite{ConstrainedLasso}. Concerning the SVR, inequality and equality constraints added as prior knowledge were studied in \cite{priorSVR}.
In this paper, the authors described a method for adding linear constraints on the Linear Programming SVR \cite{lpsvm}. This implementation of the algorithm considers the $\ell_{1}$ norm of the parameters in the optimization problem instead of the classical $\ell_{2}$ norm which leads to a linear programming optimization problem to solve instead of a quadratic programming problem. They also described a method for using information about the derivative of the function that is estimated. 

\paragraph{Sequential Minimal Optimization}
One of the main challenges of adding these constraints is that it often increases the difficulty of solving the optimization problem related to the estimator. For example, the Least Squares optimization problem has a closed form solution whereas the NNLS uses sophisticated algorithms \cite{NNLS_alg} to approach the solution. SVM and SVR algorithms were extensively studied and used in practise because very efficient algorithms were developped to solve the underlying optimization problems. 
One of them is called Sequential Minimal Optimization (SMO) \cite{SMO} and is based on a well known optimization technic called coordinate descent. The idea of the coordinate descent is to break the optimization problem into sub-problems selecting one coordinate at each step and minimizing the function only via this chosen coordinate. The developpement of parallel algorithms have increased the interest in these coordinate descent methods which show to be very efficient for large scale problems. One of the key settings for the coordinate descent is the choice of the coordinate at each step, the choice's strategy will affect the efficiency of the algorithm. There exists three families of strategies for coordinate descent: cyclic \cite{Tseng}, random \cite{nesterov} and greedy.  The SMO algorithm is a variant of a greedy coordinate descent \cite{Wright} and is the algorithm implemented in LibSVM \cite{libsvm}. It is very efficient to solve SVM/SVR optimization problems.
In the context of linear kernel, other algorithm are used such as dual coordinate descent~\cite{largescale_SVM} or trust region newton methods~\cite{lin2008trust}.

\paragraph{Priors and SMO}
In one of the application of SVR cited above, information a-priori about the estimator is not used in the estimation process and is only used in a post-processing step. 
This application comes from the cancer research field, where regression algorithms have been used to estimate the proportions of cell populations that are present inside a tumor (see \cite{survey_unmixing} for a survey). 
Several estimators have been proposed in the biostatistics litterature, most of them based on constrained least squares \cite{Abbas, Qiao, Gong} but the gold standard is the estimator based on the Support Vector Regression \cite{cibersort}.
Our work is motivated by incorporating the fact that the estimator for this application belongs to the simplex: $\mathcal{S} = \{x \in\mathbb{R}^{n}: \sum_{i=1}^{n}x_{i}=1,\text{ }x_{i}\geq 0\}$ in the SVR problem. We believe that for this application, it will lead to better estimation performance. 
From an optimization point of view, our motivation is to find an efficient algorithm that is able to solve the SVR optimization problem where generic linear constraints is added to the problem as prior knowledge, including simplex prior as described. This work follows the one from $\cite{priorSVR}$ except that in our case, we keep the $\ell_{2}$ norm on the parameters in the optimization problem which is the most commun version of the SVR optimization problem and we only focus on inequality and equality constraints as prior knowledge.  

\paragraph{Contributions}
In this paper, we study a linear SVR with linear constraints optimization problem. We show that the dual of this new problem shares similar properties with the classical $\nu$-SVR optimization problem (\cref{prop_structure}). We also prove that adding linear constraints to the SVR optimization problem does not change the nature of its dual problem, in the fact that the problem stays a semi-definite positive quadratic function subject to linear constraints.
We propose a generalized SMO algorithm that allows the resolution of the new optimization problem. We show that the updates in the SMO algorithm keep a closed form (\cref{def_updates}) and prove the convergence of the algorithm to a solution of the problem (\cref{convergence_theorem}). We illustrate on synthetic and real datasets the usefulness of our new regression estimator under different regression settings: non-negative regression, simplex regression and isotonic regression.

\paragraph{Outline}
The article proceeds as follows: we introduce the optimization problem coming from the classical SVR and describe the modifications brought by adding linear constraints in \cref{C-SVR}. We then present the SMO algorithm, its generalization for solving constrained SVR and present our result on the convergence of the algorithm in \cref{SMO}. In \cref{expe}, we use synthetic and real datasets on different regression settings to illustrate the practical performance of the new estimator.

\paragraph{Notations}
We write $||.||$ (resp. $\langle .,.\rangle$) for the euclidean norm (resp. inner product) on vectors. We use the notation $X_{:i}$ (resp. $X_{i:}$) to denote the vector corresponding the the $i^{th}$ column of the matrix $X$ (resp. $i^{th}$ row of the matrix $X$). Throughout this paper, the design matrix will be $X\in \mathbb{R}^{n\times p}$ and $y\in\mathbb{R}^{n}$ will be the response vector. $X^{T}$ will be used for the transposed matrix of $X$. The vector \textbf{e} denote the vector with only ones on each of its coordinates and $e_{j}$ denotes the canonical vector with a one at the $j^{\text{th}}$ coordinate. $\nabla_{x_{i}} f$ is the partial derivative $\frac{\partial f}{\partial x_{i}}$.

\section{Constrained Support Vector Regression}
 \label{C-SVR}
First we introduce the optimzation problem related to adding linear constraints to the SVR and discuss some interesting properties about this problem. 
\subsection{Previous work : $\nu$-Support Vector Regression}
The $\nu$-SVR estimator \cite{Scholkopf} is obtained solving the following quadratic optimization problem: 
\begin{equation}\tag{\text{SVR-P}}
    \begin{aligned}
    & \underset{\beta,\beta_{0},\xi_{i},\xi_{i}^{*},\epsilon}{\min}
    &\frac{1}{2}||\beta||^{2}+C(\nu\epsilon + \frac{1}{n}\sum_{i=1}^{n}(\xi_{i}+\xi_{i}^{*})) \\
    & \text{subject to}
    &y_{i}-\beta^{T}X_{i:}-\beta_{0}\leq \epsilon + \xi_{i} \\
    & & \beta^{T}X_{i:}+\beta_{0} -y_{i}\leq \epsilon+\xi_{i}^{*} \\
    & & \xi_{i},\xi_{i}^{*}\geq 0,\epsilon\geq 0.
    \end{aligned}
    \label{svr}
\end{equation}

By solving problem \cref{svr}, we seek a linear function $f(x)=\beta^{T}x+\beta_{0}$ where $\beta\in\mathbb{R}^{p}$ and $\beta_{0}\in\mathbb{R}$, that is at most $\epsilon$ deviating from the response vector coefficient $y_{i}$. This function does not always exist which is why slack variables $\xi \in \mathbb{R}^{n}$ and $\xi^{*}\in\mathbb{R}^{n}$ are introduced in the optimization problem to allow some observations to break the condition given before. $C$ and $\nu$ are two hyperparameters. $C\in \mathbb{R}$ controls the tolerated error and $\nu\in [0,1]$ controls the number of observations that will lay inside the tube of size $2\epsilon$ given by the two first constraints in \cref{svr}. It can be seen as an $\epsilon$-insensitive loss function where a linear penalization is put on the observations that lay outside the tube and the observations that lay inside the tube are not penalized (see \cite{Smola} for more details). \\
\\
The different algorithms proposed to solve \cref{svr} often use its dual problem like in \cite{SMO, largescale_SVM}. The dual problem is also a quadratic optimization problem with linear constraints but its structure allows an efficient resolution as we will see in more details in \cref{SMO}. The dual problem of \cref{svr} is the following optimization problem:

\begin{equation}\tag{\text{SVR-D}}
    \begin{aligned}
        & \underset{\alpha,\alpha^{*}}{\min}
        &\frac{1}{2}(\alpha-\alpha^{*})^{T}Q(\alpha-\alpha^{*})+y^{T}(\alpha-\alpha^{*}) \\
        & \text{subject to}
        &  0\leq \alpha_{i},\alpha_{i}^{*}\leq \frac{C}{n} \\
        & &  \textbf{e}^{T}(\alpha+\alpha^{*})\leq C\nu\\
        & & \textbf{e}^{T}(\alpha-\alpha^{*})=0, \\
    \end{aligned}
    \label{svr_dual}
\end{equation}

where $Q=XX^{T} \in \mathbb{R}^{2n\times 2n}$.

The equation link between \cref{svr} and \cref{svr_dual} is given by the following formula: 
\begin{equation*}
    \begin{aligned}
\beta=-\sum_{i=1}^{n}(\alpha_{i}-\alpha_{i}^{*})X_{i:}.
\end{aligned}
\label{eqlink_svr}
\end{equation*}

\subsection{The constrained optimization problem}
We propose a constrained version of problem \eqref{svr} that allows the addition of prior knowledge on the linear function $f$ that we seek to estimate. The constrained estimator is obtained solving the optimization problem: 

\begin{equation}\tag{\text{LSVR-P}}
    \begin{aligned}
    & \underset{\beta,\beta_{0},\xi_{i},\xi_{i}^{*},\epsilon}{\min}
    &\frac{1}{2}||\beta||^{2}+C(\nu\epsilon + \frac{1}{n}\sum_{i=1}^{n}(\xi_{i}+\xi_{i}^{*})) \\
    & \text{subject to}
    &\beta^{T}X_{i:}+\beta_{0} -y_{i}\leq \epsilon + \xi_{i} \\
    & & y_{i}-\beta^{T}X_{i:}-\beta_{0}\leq \epsilon+\xi_{i}^{*} \\
    & & \xi_{i},\xi_{i}^{*}\geq 0,\epsilon\geq 0 \\
    & & A\beta \leq b \\
    & & \Gamma\beta = d, \\
    \end{aligned}
    \label{polyhedral_svr}
\end{equation}

where $A\in\mathbb{R}^{k_{1}\times p}$, $\Gamma\in\mathbb{R}^{k_{2}\times p}$,  $\beta\in \mathbb{R}^{p}$, $\xi$, $\xi^{*} \in \mathbb{R}^{n}$ and $\beta_{0}$, $\epsilon$, $\in \mathbb{R}$.

The algorithm that we propose in \cref{SMO} also uses the structure of the dual problem of \cref{polyhedral_svr}. The next proposition introduces the dual problem and some of its properties.

\begin{proposition}
    \label{dual_prop}
    If the set $\{ \beta\in\mathbb{R}^{n}, A\beta\leq b, \Gamma\beta\ = d\}$ is not empty then, 
    \begin{enumerate}
        \item Strong duality holds for \cref{polyhedral_svr}.
        \item The dual problem of \cref{polyhedral_svr} is
        \begin{equation}\tag{\text{LSVR-D}}
            \begin{aligned}
            & \underset{\alpha,\alpha^{*},\gamma,\mu}{\min}
            & \frac{1}{2}\bigg[ (\alpha-\alpha^{*})^{T}Q(\alpha-\alpha^{*})+\gamma^{T}AA^{T}\gamma +\mu^{T}\Gamma\Gamma^{T}\mu\\
            & & +2\sum^{n}_{i=1}(\alpha_{i}-\alpha_{i}^{*})\gamma^{T}AX_{i:}-2\sum^{n}_{i=1}(\alpha_{i}-\alpha_{i}^{*})\mu^{T}\Gamma X_{i:}-2\gamma^{T}A\Gamma^{T}\mu\bigg]\\
            & &+y^{T}(\alpha-\alpha^{*})+\gamma^{T}b-\mu^{T}d\\
            & \text{subject to}
            & 0 \leq \alpha_{i}^{(*)}\leq \frac{C}{n} \\
            & & \textbf{e}^{T}(\alpha+\alpha^{*})\leq C\nu \\
            & & \textbf{e}^{T}(\alpha-\alpha^{*})=0 \\
            & & \gamma_{j}\geq 0. \\
            \end{aligned}
            \label{dual_polyhedral_svr}
        \end{equation}
        \item The equation link between primal and dual is $$\beta= -\sum_{i=1}^{n}(\alpha_{i}-\alpha_{i}^{*})X_{i:} - A^{T}\gamma +\Gamma^{T}\mu.$$ 
    \end{enumerate}
\end{proposition}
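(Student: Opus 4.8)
The plan is to read \cref{polyhedral_svr} as a convex quadratic program whose constraints are all affine and to construct its Lagrange dual by hand. For item (1), the relevant fact is that for a convex program in which every constraint is affine, the refined Slater condition asks only that the feasible set be nonempty for strong duality to hold; no strict feasibility is needed. So it suffices to exhibit a feasible point of \cref{polyhedral_svr}. Picking $\beta$ in the polyhedron $\{\beta : A\beta\le b,\ \Gamma\beta=d\}$, which is nonempty by hypothesis, and setting $\beta_0=0$, $\epsilon=0$, $\xi_i=\max\{0,\ \beta^{T}X_{i:}-y_i\}$, $\xi_i^{*}=\max\{0,\ y_i-\beta^{T}X_{i:}\}$ gives a point satisfying every constraint of \cref{polyhedral_svr}. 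Since the objective is also bounded below by $0$, the primal value is finite and strong duality holds.

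\emph{Items (2)--(3).} I would introduce multipliers $\alpha_i,\alpha_i^{*}\ge 0$ for the two tube constraints, $\eta_i,\eta_i^{*}\ge 0$ for $\xi_i\ge 0$ and $\xi_i^{*}\ge 0$, $\delta\ge 0$ for $\epsilon\ge 0$, $\gamma\ge 0$ in $\mathbb{R}^{k_1}$ for $A\beta\le b$, and a free $\mu\in\mathbb{R}^{k_2}$ for $\Gamma\beta=d$, writing the last term as $\mu^{T}(d-\Gamma\beta)$ so that the sign agrees with the claimed equation link. Form the Lagrangian $L$. Since $L$ is affine in $\beta_0,\xi,\xi^{*},\epsilon$, its infimum over these variables is $-\infty$ unless their coefficients vanish, which forces $\mathbf{e}^{T}(\alpha-\alpha^{*})=0$, $\alpha_i+\eta_i=\frac{C}{n}$, $\alpha_i^{*}+\eta_i^{*}=\frac{C}{n}$ and $\mathbf{e}^{T}(\alpha+\alpha^{*})+\delta=C\nu$; eliminating $\eta_i,\eta_i^{*},\delta\ge 0$ turns these into $0\le\alpha_i^{(*)}\le\frac{C}{n}$ and $\mathbf{e}^{T}(\alpha+\alpha^{*})\le C\nu$. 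What is left of $L$ is a strictly convex quadratic in $\beta$, and $\nabla_{\beta}L=0$ gives $\beta=-\sum_i(\alpha_i-\alpha_i^{*})X_{i:}-A^{T}\gamma+\Gamma^{T}\mu$, which is item (3). Substituting this $\beta$ back, the $\frac12\|\beta\|^2$ term and the terms linear in $\beta$ collapse into $-\frac12\|\beta\|^2$; expanding $\|\beta\|^2=\big\|\sum_i(\alpha_i-\alpha_i^{*})X_{i:}+A^{T}\gamma-\Gamma^{T}\mu\big\|^2$ into its six inner products yields exactly the bracketed quadratic form of \cref{dual_polyhedral_svr}, and the leftover linear terms are $-y^{T}(\alpha-\alpha^{*})-\gamma^{T}b+\mu^{T}d$. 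Passing from the $\max$ of the dual function to a $\min$ flips the sign and gives the objective of \cref{dual_polyhedral_svr} together with the remaining constraint $\gamma_j\ge 0$, which proves item (2).

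The only delicate part is the last piece of bookkeeping: expanding the squared norm of $\sum_i(\alpha_i-\alpha_i^{*})X_{i:}+A^{T}\gamma-\Gamma^{T}\mu$ and matching every cross term — via $\sum_{i,j}(\alpha_i-\alpha_i^{*})(\alpha_j-\alpha_j^{*})X_{i:}^{T}X_{j:}=(\alpha-\alpha^{*})^{T}Q(\alpha-\alpha^{*})$, $\langle A^{T}\gamma,A^{T}\gamma\rangle=\gamma^{T}AA^{T}\gamma$, $\langle\Gamma^{T}\mu,\Gamma^{T}\mu\rangle=\mu^{T}\Gamma\Gamma^{T}\mu$, and the three mixed products — against the six summands listed in \cref{dual_polyhedral_svr}, while keeping the sign convention on $\mu$ consistent between the Lagrangian, the equation link, and the dual objective. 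Beyond this routine computation there is no genuine obstacle, since convexity together with affine constraints hands us strong duality (and primal attainment) essentially for free.
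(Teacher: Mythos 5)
Your proposal is correct and follows essentially the same route as the paper: write the Lagrangian of \cref{polyhedral_svr}, force the coefficients of $\beta_{0},\xi,\xi^{*},\epsilon$ to vanish (yielding the box constraints, $\textbf{e}^{T}(\alpha-\alpha^{*})=0$ and $\textbf{e}^{T}(\alpha+\alpha^{*})\leq C\nu$), solve $\nabla_{\beta}L=0$ to obtain the primal--dual link, and substitute back so that the quadratic collapses to $-\frac{1}{2}\|\beta\|^{2}$, whose expansion gives the bracketed form of \cref{dual_polyhedral_svr}. The only (minor) difference is part 1, where you exhibit an explicit primal feasible point and invoke the refined Slater condition for affine constraints, which is a slightly more careful justification than the paper's brief remark about Slater's condition for the dual quadratic program, but not a different proof strategy.
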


The proof of the first statement of the proposition is given in the discussion below whereas the proofs for the two other statements are given in the \cref{proof_section}.
We have that $\alpha$, $\alpha^{*}\in \mathbb{R}^{n}$, $\gamma \in \mathbb{R}^{k_{1}}$ is the vector of Lagrange multipliers associated the the inequality constraint $A\beta\leq b$ which explains the non-negative constraints on its coefficients. $\mu\in\mathbb{R}^{k_{2}}$ are the Lagrange multipliers associated to the equality constraint $\Gamma\beta=d$ which also explains that there is no constraints in the dual problem on $\mu$. The objective function $f$ which we will write in the stacked form as: $$f(\theta)=\theta^{T}\bar{Q}\theta+l^{T}\theta,$$
where 
$$\theta=\begin{bmatrix} 
    \alpha \\
    \alpha^{*} \\
    \gamma \\
    \mu 
    \end{bmatrix}, \text{ }l= \begin{bmatrix}
        y \\ -y \\ b \\ -d
    \end{bmatrix} \in \mathbb{R}^{2n+k_{1}+k_{2}}, \text{ }\bar{Q}=\begin{bmatrix}
      Q & -Q & XA^{T} & -X\Gamma^{T} \\
      -Q & Q & -XA^{T} & X\Gamma^{T} \\
      AX^{T} & -AX^{T} & AA^{T} & -A\Gamma^{T} \\
      -\Gamma X^{T} & \Gamma X^{T} & -\Gamma A^{T} & \Gamma\Gamma^{T}

    \end{bmatrix}$$ is a square matrix of size $2n+k_{1}+k_{2}$.

    An important observation is that this objective function is always convex. The matrix $\bar{Q}$ is the product of the matrix $\begin{bmatrix} 
        X \\
        -X \\
        A \\
        -\Gamma
        \end{bmatrix}$ and its transpose matrix. It means that $\bar{Q}$ is a Gramian matrix and it is positive semi-definite which implies that $f$ is convex. The problem \cref{dual_polyhedral_svr} is then a quadratic programming optimization problem which meets Slater's condition if there exists a $\theta$ that belongs to the feasible domain which we will denote by $\mathcal{F}$. If there is such a $\theta$ we have strong duality holding between problem \cref{polyhedral_svr} and \cref{dual_polyhedral_svr}. The only condition we need to have on $A$ and $\Gamma$ is that they define a non-empty polyhedron in order to be able to solve the optimization problem.

    Our second observation on problem \cref{dual_polyhedral_svr} is that the inequality constraints $\textbf{e}^{T}(\alpha+\alpha^{*})\leq C\nu$ is replaced by an equality constraints in the same way that it was suggested in \cite{Chang} for the classical problem \cref{svr_dual}.

    \begin{proposition}\label{prop_structure}

        If $\epsilon>0$, all optimal solutions of \cref{dual_polyhedral_svr} satisfy
        \begin{enumerate}
            \item  $\alpha_{i}\alpha_{i}^{*}=0$, $\forall i$
            \item  $\textbf{e}^{T}(\alpha+\alpha^{*})=C\nu$
        \end{enumerate}
    \end{proposition}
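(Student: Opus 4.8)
The plan is to exploit strong duality, which holds by \cref{dual_prop} under the stated non-emptiness assumption, together with the fact that for a convex program any optimal primal--dual pair forms a KKT point. Concretely, I would fix an optimal solution $(\beta,\beta_{0},\xi,\xi^{*},\epsilon)$ of \cref{polyhedral_svr} with $\epsilon>0$ and an \emph{arbitrary} optimal solution $(\alpha,\alpha^{*},\gamma,\mu)$ of \cref{dual_polyhedral_svr}. Writing $L$ for the Lagrangian of \cref{polyhedral_svr}, with multipliers $\alpha,\alpha^{*}\ge0$ attached to the two tube constraints, $\eta,\eta^{*}\ge0$ to $\xi_{i},\xi_{i}^{*}\ge0$, $\rho\ge0$ to $\epsilon\ge0$, $\gamma\ge0$ to $A\beta\le b$ and $\mu$ (free) to $\Gamma\beta=d$, strong duality implies that $(\beta,\beta_{0},\xi,\xi^{*},\epsilon)$ minimizes $L(\,\cdot\,;\alpha,\alpha^{*},\gamma,\mu,\eta,\eta^{*},\rho)$ and that all complementary slackness relations hold. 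The reason this applies to every dual optimum, and not just to one convenient pair, is that in a convex program with strong duality the set of KKT pairs is the product of the primal and dual optimal sets; and the multipliers named $\alpha,\alpha^{*},\gamma,\mu$ in \cref{dual_polyhedral_svr} are exactly the KKT multipliers used when deriving that dual in \cref{proof_section}.

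For the first identity I would argue by contradiction. Suppose $\alpha_{i}>0$ and $\alpha_{i}^{*}>0$ for some $i$. Complementary slackness on the two tube constraints then forces the equalities
\[
\beta^{T}X_{i:}+\beta_{0}-y_{i}=\epsilon+\xi_{i},\qquad y_{i}-\beta^{T}X_{i:}-\beta_{0}=\epsilon+\xi_{i}^{*},
\]
and summing them gives $0=2\epsilon+\xi_{i}+\xi_{i}^{*}$, which is impossible since $\epsilon>0$ and $\xi_{i},\xi_{i}^{*}\ge0$. Hence $\alpha_{i}\alpha_{i}^{*}=0$ for all $i$.

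For the second identity I would use stationarity of $L$ with respect to $\epsilon$. The only $\epsilon$-dependent contributions are the term $C\nu\epsilon$ from the objective, the terms $-\alpha_{i}\epsilon$ and $-\alpha_{i}^{*}\epsilon$ coming from the tube constraints, and $-\rho\epsilon$ from $\epsilon\ge0$, so $\partial L/\partial\epsilon=0$ reads $C\nu-\textbf{e}^{T}(\alpha+\alpha^{*})-\rho=0$; since $\rho\ge0$ this in passing re-proves the dual feasibility inequality $\textbf{e}^{T}(\alpha+\alpha^{*})\le C\nu$. Complementary slackness for $\epsilon\ge0$ gives $\rho\epsilon=0$, and because $\epsilon>0$ we get $\rho=0$, hence $\textbf{e}^{T}(\alpha+\alpha^{*})=C\nu$.

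The computations themselves are the short, classical $\nu$-SVR ones, so the only real obstacle is bookkeeping: checking carefully that the $\alpha,\alpha^{*},\gamma,\mu$ of \cref{dual_polyhedral_svr} are genuinely the primal KKT multipliers (so complementary slackness and the $\epsilon$-stationarity relation transfer verbatim), and making sure the conclusion is stated for every dual optimum — which is exactly why one invokes the product structure of the KKT set rather than constructing a single advantageous primal--dual pair.
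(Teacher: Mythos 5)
Your proposal is correct and follows essentially the same route as the paper: part 1 is the identical complementary-slackness contradiction on the two tube constraints, and part 2 is exactly the paper's combination of $\nabla_{\epsilon}L=0$ (the paper's multiplier $\eta$ playing the role of your $\rho$) with $\eta\epsilon=0$ and $\epsilon>0$. The only difference is that you make explicit the justification (product structure of the KKT set under strong duality) for why the argument covers \emph{every} dual optimum, a point the paper leaves implicit.
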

  The proof is given in \cref{proof_structure}.
  This observation will be important for the algorithm that we propose in \cref{SMO}.

\section{Generalized Sequential Minimal Optimization}
\label{SMO}
In this section we propose a generalization of the SMO algorithm \cite{SMO} to solve problem \cref{dual_polyhedral_svr} and present our main result on the convergence of the proposed algorithm to the solution of \cref{dual_polyhedral_svr}. The SMO algorithm is a variant of greedy coordinate descent \cite{Wright} taking into consideration non-separable constraints, which in our case are the two equality constraints. We start by describing the previous algorithm that solve \cref{svr_dual}.
\subsection{Previous work : Sequential Minimal Optimization}
\label{previous_SMO}
In this subsection, we define $f(\alpha,\alpha^{*})=\frac{1}{2}(\alpha-\alpha^{*})^{T}Q(\alpha-\alpha^{*})+y^{T}(\alpha-\alpha^{*})$ and we note $\nabla f\in \mathbb{R}^{2n}$ its gradient. From \cite{Keerthi}, we rewrite the Karush-Kuhn-Tucker (KKT) conditions in the following way: 

\begin{equation}
    \underset{i\in I_{\text{up}}}{\min}\nabla_{\alpha_{i}} f\geq \underset{j\in I_{\text{low}}}{\max}\nabla_{\alpha_{j}} f
    \label{opt_condition}
    \end{equation}

where $$
I_{\text{up}}(\alpha)=\{i \in \{1,\hdots,l\} : \alpha_{i}<\frac{C}{l}\}
$$

$$
I_{\text{low}}(\alpha)=\{i \in \{1,\hdots,l\} : \alpha_{i}>0\}.
$$

The same condition is written for the $\alpha^{*}$ variables replacing $\alpha_{i}$ by $\alpha_{i}^{*}$ above. These conditions leads to an important definition for the rest of this paper. 

\begin{definition}
    We will say that $(i,j)$ is a violating pair of variables if one of these two conditions is satisfied:
    \begin{equation*}
        \begin{aligned}
        i\in I_{\text{up}}(\alpha), j\in I_{\text{low}}(\alpha) \text{ and } \nabla_{\alpha_{i}}f< \nabla_{\alpha_{j}}f \\
        i\in I_{\text{low}}(\alpha), j\in I_{\text{up}}(\alpha) \text{ and } \nabla_{\alpha_{i}}f> \nabla_{\alpha_{j}}f.
    \end{aligned}
    \end{equation*}
\end{definition}

Because the algorithm SMO does not provide in general an exact solution in a finite number of steps there is a need to relax the optimality conditions which gives a new definition.
\begin{definition}
    We will say that $(i,j)$ is a $\tau$-violating pair of variables if one of these two conditions is satisfied:
    \begin{equation*}
        \begin{aligned}
              i\in I_{\text{up}}(\alpha), j\in I_{\text{low}}(\alpha) \text{ and } \nabla_{\alpha_{i}}f< \nabla_{\alpha_{j}}f -\tau\\
        i\in I_{\text{low}}(\alpha), j\in I_{\text{up}}(\alpha) \text{ and } \nabla_{\alpha_{i}}f> \nabla_{\alpha_{j}}f +\tau.
        \end{aligned}
    \end{equation*}
\end{definition}

The SMO algorithm will then choose at each iteration a pair of violating variables in the $\alpha$ block or in the $\alpha^{*}$ block. Once the choice is done, a subproblem of size two is solved, considering that only the two selected variables are to be minimized in problem \cref{svr_dual}.
The outline of the algorithm is presented in \cref{SMO_alg}.

The choice of the violating pair of variables presented in \cite{Keerthi2} was to always work with the most violating pairs of variables, which means the variables that leads to the largest gap compared to the optimality conditions given in \cref{opt_condition}. This choice is what makes a link with greedy coordinate descent, however greedy here is related to the largest gap with the optimality score and is not related to the largest decrease in the objective function.

The resolution of the subproblem of size two has a closed form. The idea is to use the two equality constraints to go from a problem of size two to a problem of size one. Then, the goal is to minimize a quadratic function of one variable under box constraints which is done easily. We will give more details of the resolution of these subproblems in \cref{updates} for our proposed algorithm.

The proof of convergence of SMO algorithm was given in \cite{Keerthi} without convergence rate. The proof relies on showing that the sequence defined by the algorithm $f(\alpha^{k},(\alpha^{*})^{k})$ is a decreasing sequence and that there cannot be the same violating pair of variables infinitely many times. The linear convergence rate was proved later by Schmidt and She \cite{She2017} as well as the identification of the support vectors in finite time.

\begin{algorithm}[t]
    \caption{SMO algorithm}\label{SMO_alg}
    \begin{algorithmic}
    \Require $\tau$>0
    \State Initializing $\alpha^{0}\in\mathbb{R}^{n}$, $(\alpha^{*})^{0}\in\mathbb{R}^{n}$ in $\mathcal{F}$ and set $k=0$

    \While{$\Delta > \tau$}
    \State $i\gets\underset{i\in I_{\text{up}}}{\argmin }\nabla_{\alpha_{i}}f $\hspace{1em} $j\gets\underset{i\in I_{\text{low}}}{\argmax }\nabla_{\alpha_{j}}f $
    
    \State $i^{*}\gets\underset{i\in I_{\text{up}}^{*}}{\argmin }\nabla_{\alpha_{i}^{*}}f$\hspace{1em} $j^{*}\gets\underset{i\in I_{\text{low}}^{*}}{\argmax }\nabla_{\alpha_{j}^{*}}f$
    
    \State $\Delta_{1}\gets\nabla_{\alpha_{j}}f-\nabla_{\alpha_{i}}f$
    \State $\Delta_{2}\gets\nabla_{\alpha_{j}^{*}}f-\nabla_{\alpha_{i}^{*}}f$
    \State $\Delta\gets\max(\Delta_{1},\Delta_{2})$ \Comment{Select the maximal violating pair}

    \State \If{$\Delta=\Delta_{1}$}
    \State $\alpha^{k+1}\gets$ Solution of subproblem for variables $\alpha_{i}$ and $\alpha_{j}$
    \State \Else
    \State $(\alpha^{*})^{k+1}\gets $Solution of subproblem for variables $\alpha_{i^{*}}$ and $\alpha_{j^{*}}$
    \State \EndIf
    \State $k \gets k+1$
    \EndWhile
    \Return $\alpha^{k}, (\alpha^*)^{k}$
    \end{algorithmic}
    \end{algorithm}
    \subsection{Optimality conditions for the constrained SVR}

    In this subsection we define $f$ as the objective function of problem \cref{dual_polyhedral_svr} and $\nabla f\in \mathbb{R}^{2n+k_{1}+k_{2}}$ its gradient. The Lagrangian of optimization problem \cref{dual_polyhedral_svr} is defined by : 

    \begin{multline*}
        L=f-\sum_{i=1}^{n}(\lambda_{i}\alpha_{i}+\lambda_{i}^{*}\alpha_{i}^{*})+\sum_{i=1}^{n}\beta_{i}(\alpha_{i}-\frac{C}{n})+\beta_{i}^{*}(\alpha_{i}^{*}-\frac{C}{n})\\ -\sigma(\sum_{i=1}^{n}(\alpha_{i}+\alpha_{i}^{*})-C\nu)-\delta\sum_{i=1}^{n}(\alpha_{i}-\alpha_{i}^{*})-\sum_{j=1}^{k_{1}}\eta_{j}\gamma_{j}.
    \end{multline*}

    We then give KKT conditions for each block of variables: \\
\\
    \textbf{The $\alpha$ block}\hspace{1em} \begin{equation*}
        \begin{aligned}
            \nabla_{\alpha_{i}}L= \nabla_{\alpha_{i}}f-\lambda_{i}+\beta_{i}-\sigma-\delta=0 \\
            \lambda_{i}\alpha_{i}=0 \\
            \beta_{i}(\alpha_{i}-\frac{C}{n})=0 \\
            \lambda_{i}\geq 0 \\
            \beta_{i} \geq 0
        \end{aligned}
    \end{equation*}

   We will consider different possiblities of value for $\alpha_{i}$.\\
   
   \textbf{Case 1}- $\alpha_{i}=0$ then $\beta_{i}=0$ and $\lambda_{i}\geq 0$\\
   \begin{equation*}
       \nabla_{\alpha_{i}} f-\sigma-\delta\geq 0
   \end{equation*}

   \textbf{Case 2}- $\alpha_{i}=\frac{C}{n}$ then $\lambda_{i}=0$ and $\beta_{i}\geq 0$ \\
   \begin{equation*}
       \nabla_{\alpha_{i}} f-\sigma-\delta\leq 0
   \end{equation*}
   
   \textbf{Case 3}- $0<\alpha_{i}<\frac{C}{n}$ then $\beta_{i}=0$, $\theta_{i}=0$ \\
   \begin{equation*}
       \nabla_{\alpha_{i}} f-\sigma-\delta=0
   \end{equation*}
   
   We then consider the set of indices :
   $$
   I_{\text{up}}(\alpha)=\{i \in \{1,\hdots,n\} : \alpha_{i}<\frac{C}{n}\}
   $$
   
   $$
   I_{\text{low}}(\alpha)=\{i \in \{1,\hdots,n\} : \alpha_{i}>0\}
   $$

   The optimality conditions are satisfied if and only if 
   \begin{equation*}
   \underset{i\in I_{\text{up}}}{\min}\nabla_{\alpha_{i}} f\geq \underset{j\in I_{\text{low}}}{\max}\nabla_{\alpha_{j}} f.
   \end{equation*}
\\
   \textbf{The $\alpha^{*}$ block}\hspace{1em}
In this block, the conditions are very similar to the ones given for the block $\alpha$, the only difference here is that we will have two new sets of indices: 

$$
I_{\text{up}}^{*}(\alpha^{*})=\{i \in \{1,\hdots,n\} : \alpha_{i}^{*}<\frac{C}{n}\}
$$
and
$$
I_{\text{low}}^{*}(\alpha)=\{i \in \{1,\hdots,n\} : \alpha_{i}^{*}>0\}
$$

which gives the following optimality condition: 
\begin{equation*}
\underset{i\in I_{\text{up}}^{*}}{\min}\nabla_{\alpha_{i}^{*}} f\geq \underset{j\in I_{\text{low}}^{*}}{\max}\nabla_{\alpha_{j}^{*}} f.
\end{equation*}
\\
\textbf{The $\gamma$ block}\hspace{1em}

\begin{equation*}
    \nabla_{\gamma_{j}}L= \nabla_{\gamma_{j}}f-\eta_{j}=0 \\
    \eta_{j}\gamma_{j}=0 \\
    \eta_{j}\geq 0
\end{equation*}

We will consider different possiblities of value for $\gamma_{j}$.\\

\textbf{Case 1}- $\gamma_{j}=0$ then \\
\begin{equation*}
    \nabla_{\gamma_{j}} f\geq 0
\end{equation*}

\textbf{Case 2}- $\gamma_{j}>0$ \\
\begin{equation*}
    \nabla_{\gamma_{j}} f= 0
\end{equation*}

\begin{definition}
    We will say that $j$ is a $\tau$-violating variable for the block $\gamma$ if 
    \begin{equation*}
        \nabla_{\gamma_{j}}f + \tau < 0.
    \end{equation*}
\end{definition}

\textbf{The $\mu$ block}\hspace{1em}

\begin{equation*}
    \nabla_{\mu_{j}}L= \nabla_{\mu_{j}}f=0 
\end{equation*}

\begin{definition}
    We will say that $j$ is a $\tau$-violating variable for the block $\mu$ if 
    \begin{equation*}
        |\nabla_{\mu_{j}} f|> \tau.
    \end{equation*}
\end{definition}

From these conditions on each block, we build an optimization strategy that follows the idea of the SMO described in \cref{previous_SMO}. For each block of variables, we compute what we call a \emph{violating optimality score} based on the optimality conditions given above. Once the scores are computed for each block, we select the block which has the largest score and solve an optimization subproblem in the block selected. If the block $\alpha$ or the block $\alpha^{*}$ is selected, we will update a pair of variables by solving a minization problem of size two. However if the block $\gamma$ or the block $\mu$ is selected, we will update only one variable at a time. This is justified by the fact that the variables $\alpha$ and $\alpha^{*}$ have non-separable equality constraints linking them together. The rest of this section will be dedicated to the presentation of our algorithm and to giving some interesting properties such as a closed form for updates on each of the blocks and a convergence theorem.

\begin{algorithm}[t]
    \caption{Generalized SMO algorithm}\label{euclid}
    \begin{algorithmic}
    \Require $\tau>0$
    \State Initializing $\alpha^{0}\in\mathbb{R}^{n}$, $(\alpha^{*})^{0}\in\mathbb{R}^{n}$, $\gamma^{0}\in\mathbb{R}^{k_{1}}$ and $\mu^{0}\in\mathbb{R}^{k_{2}}$ in $\mathcal{F}$ and set $k=0$
    \While{$\Delta > \tau$}
    \State $i\gets\underset{i\in I_{\text{up}}}{\argmin }\nabla_{\alpha_{i}}f $\hspace{5em} $j\gets\underset{i\in I_{\text{low}}}{\argmax }\nabla_{\alpha_{j}}f $
    \State $i^{*}\gets\underset{i\in I_{\text{up}}^{*}}{\argmin }\nabla_{\alpha_{i}^{*}}f$ \hspace{3.7em} $j^{*}\gets\underset{i\in I_{\text{low}}^{*}}{\argmax }\nabla_{\alpha_{j}^{*}}f$
    \State $\Delta_{1}\gets\nabla_{\alpha_{j}}f-\nabla_{\alpha_{i}}f$ \hspace{2.5em}
    $\Delta_{2}\gets\nabla_{\alpha_{j}^{*}}f-\nabla_{\alpha_{i}^{*}}f$
    \State $\Delta_{3}\gets-\underset{j\in \{1,\hdots,k_{1}\}}{\min} \nabla_{\gamma_{j}}f$\hspace{1.5em}
    $\Delta_{4}\gets\underset{j\in \{1,\hdots,k_{2}\}}{\max} |\nabla_{\mu_{j}}f|$
    \State
    \State $\Delta\gets\max(\Delta_{1},\Delta_{2},\Delta_{3},\Delta_{4})$ \Comment{Select the maximal violating variables}
    \State

    \State \If{$\Delta=\Delta_{1}$}
    \State $\alpha^{k+1}\gets$Solution of subproblem for variables $\alpha_{i}$ and $\alpha_{j}$
    \State \ElsIf{$\Delta=\Delta_{2}$}
    \State $(\alpha^{*})^{k+1}\gets$Solution of subproblem for variables $\alpha_{i^{*}}$ and $\alpha_{j^{*}}$
    \State \ElsIf{$\Delta=\Delta_{3}$}
    \State $u=\underset{i\in \{1,\hdots,k_{1}\}}{\argmin} \nabla_{\gamma_{i}}f$
    \State $\gamma^{k+1}\gets$Solution of subproblem for variable $\gamma_{u}$
    \State \Else
    \State $u=\underset{i\in \{1,\hdots,k_{2}\}}{\argmax} \nabla_{\mu_{i}}f$
    \State $\mu^{k+1}\gets$ Solution of subproblem for variable $\mu_{u}$
    \State \EndIf
    \State $k \gets k+1$
    \EndWhile
    \Return $\alpha^{k}, (\alpha^*)^{k}$, $\gamma^{k}$, $\mu^{k}$

    \end{algorithmic}
    \end{algorithm}

    \subsection{Updates rules and convergence}
    \label{updates}
    The first definition describes the closed form updates for the different blocks of variables.

    \begin{definition}
        \label{def_updates}
        The update between iterate $k$ and iterate $k+1$ of the generalized SMO algorithm has the following form:
        \begin{enumerate}
            \item if the block $\alpha$ is selected and $(i,j)$ is the most violating pair of variable then the update will be as follows:
            \begin{equation*}
                \begin{aligned}
                    \alpha_{i}^{k+1}=\alpha_{i}^{k}+t^{*} \\
                    \alpha_{j}^{k+1}=\alpha_{j}^{k}-t^{*}, 
                \end{aligned}
            \end{equation*}
           where $t^{*}=\min(\max(I_{1},-\frac{(\nabla_{\alpha_{i}}f-\nabla_{\alpha_{j}}f)}{(Q_{ii}-2Q_{ij}+Q_{jj})}),I_{2})$ with $I_{1}=\max(-\alpha_{i}^{k},\alpha_{j}^{k}-\frac{C}{n})$ and $I_{2}=\min(\alpha_{j}^{k},\frac{C}{n}-\alpha_{i}^{k})$.
            
           \item if the block $\alpha^{*}$ is selected and $(i^{*},j^{*})$ is the most violating pair of variable then the update will be as follows:
           \begin{equation*}
            \begin{aligned}
                (\alpha_{i}^{*})^{k+1}=(\alpha_{i}^{*})^{k}+t^{*} \\
               (\alpha_{j}^{*})^{k+1}=(\alpha_{j}^{*})^{k}-t^{*}, 
            \end{aligned}
           \end{equation*}
          where $t^{*}=\min(\max(I_{1},-\frac{(\nabla_{\alpha_{i}^{*}}f-\nabla_{\alpha_{j}^{*}}f)}{(Q_{ii}-2Q_{ij}+Q_{jj})}),I_{2})$ with $I_{1}=\max(-(\alpha_{i}^{*})^{k},(\alpha_{j}^{*})^{k}-\frac{C}{n})$ and $I_{2}=\min((\alpha_{j}^{*})^{k},\frac{C}{n}-(\alpha_{i}^{*}))^{k}$.
           \item if the block $\gamma$ is selected and $i$ is the index of the most violating variable in this block then the update will be as follows:
           $$
           \gamma_{i}^{k+1}=\max(-\frac{\nabla_{\gamma_{i}}f}{(AA^{T})_{ii}}+\gamma_{i}^{k},0).
           $$
           \item if the block $\mu$ is selected and $i$ is the index of the most violating variable in this block then the update will be as follows:
           $$
           \mu_{i}^{k+1}=-\frac{\nabla_{\mu_{i} f}}{(\Gamma\Gamma^{T})_{ii}}+\mu_{i}^{k}.
           $$
        \end{enumerate}
    \end{definition}

    This choice of updates comes from solving the optimization problem \cref{dual_polyhedral_svr} considering that only one or two variables are updated at each step. One of the key elements of the algorithm is to make sure that at each step the iterate belongs to $\mathcal{F}$. Let's suppose that the block $\alpha$ is selected as the block in which the update will happen and let $(i,j)$ be the most violating pair of variables. The update is the resolution of a subproblem of size 2, considering that only $\alpha_{i}$ and $\alpha_{j}$ are the variables, the rest remains constant. The two equality constraints in \cref{dual_polyhedral_svr}, $\sum_{i=1}^{n}\alpha_{i}-\alpha_{i}^{*}=0$ and $\sum_{i=1}^{n}\alpha_{i}+\alpha_{i}^{*}=C\nu$, lead to the two following equalities: $\alpha_{i}^{k+1}+\alpha_{j}^{k+1}=\alpha_{i}^{k}+\alpha_{j}^{k}$.  The later yields to using a parameter $t$ for the update of the variables leading to: 
    \begin{equation*}
        \begin{aligned}
            \alpha_{i}^{k+1}=\alpha_{i}^{k}+t, \\
            \alpha_{j}^{k+1}=\alpha_{j}^{k}-t.
        \end{aligned}
    \end{equation*}
    Updating the variable in the block $\alpha$ this way will force the iterates of \cref{SMO_alg} to meet the two equalities constraints at each step.
    We find $t$ by solving \cref{dual_polyhedral_svr} considering that we minimize only over $t$. Let $u\in\mathbb{R}^{2n+p+k_{1}+k_{2}}$ be the vector that contains only zeros except at the $i^{th}$ coordinate where it is equal to $t$ and at $j^{th}$ coordinate where it is equal to $-t$. 
    Therefore, we find $t$ by minimizing the following optimization problem:
    \begin{equation*}
        \begin{aligned}
            & \underset{t \in \mathbb{R}}{\min}
            & \psi (t)=\frac{1}{2}\bigg[ (\theta^{k}+u)^{T}\bar{Q}(\theta^{k}+u)\bigg]+l^{T}(\theta^{k}+u)\\
            & \text{subject to}
            & 0 \leq \alpha_{i}^{k+1},\alpha_{j}^{k+1}\leq \frac{C}{n}. \\
            \end{aligned}
    \end{equation*}

    First we minimize the objective function without the constraints and since it is a quadratic function of one variable we just clip the solution of unconstrained problem to have the solution of the constrained problem. We will use the term "clipped update" or "clipping" when the update is projected unto the constraints space and is not the result of the unconstrainted optimization problem. As we only consider size one problem for the updates, it will mean that the update will be a bound of an interval. We will use the notation K as a term containing the terms that do not depend on $t$. We write that 

    \begin{align*}
        \psi (t) &=\frac{1}{2}u^{T}\bar{Q}u +u^{T}\bar{Q}\theta^{k}+l^{T}u+ K \\
                 & = \frac{1}{2}t^{2}(\bar{Q}_{ii}+\bar{Q}_{jj}-2\bar{Q}_{ij})+u^{T}\nabla f(\theta^{k}) + K \\
                 & = \frac{1}{2}t^{2}(\bar{Q}_{ii}+\bar{Q}_{jj}-2\bar{Q}_{ij})+t(\nabla_{\alpha_{i}} f(\theta^{k})-\nabla_{\alpha_{j}} f(\theta^{k})) + K.
    \end{align*}
    It follows that the unconstrained minimum of $\psi (t)$ is $t_{q}=\frac{-(\nabla_{\alpha_{i}} f(\theta^{k})-\nabla_{\alpha_{j}} f(\theta^{k}))}{(\bar{Q}_{ii}+\bar{Q}_{jj}-2\bar{Q}_{ij})}$. Taking the constraints into account we have that:
    \begin{align*}
        0\leq \alpha_{i}^{k}+t\leq \frac{C}{n}, \\
        0\leq \alpha_{j}^{k}-t\leq \frac{C}{n},
    \end{align*}
    it yields to $t^{*}=\min(\max(I_{1},t_q),I_{2})$ with $I_{1}=\max(-\alpha_{i},\alpha_{j}-\frac{C}{n})$ and $I_{2}=\min(\alpha_{j},\frac{C}{n}-\alpha_{i})$.
    The definition of the updates for the block $\alpha^{*}$ relies on the same discussion.

    Let's now make an observation that will explain the definition of the updates for the blocks $\gamma$ and $\mu$. Let $i$ be the index of the variable that will be updated. Solving the problem:
    \begin{equation*}
         \theta_{i}^{k+1}=\underset{\theta_{i}}{\argmin}\frac{1}{2}\theta^{T}\bar{Q}\theta+l^{T}\theta,
    \end{equation*}
    leads to the following solution $\theta^{k+1}_{i}=\frac{-\nabla_{i}f(\theta^{k})}{\bar{Q}_{ii}}+\theta_{i}^{k}$. 

    Let's recall that the update for the block $\gamma$ has to keep the coefficient of $\gamma$ positive to stay in $\mathcal{F}$ hence we have to perform the following clipped update with $i\in\{2n+p+1,\hdots,2n+p+k_{1}\}$:
    $$
    \theta_{i}^{k+1}=\max(\frac{-\nabla_{\gamma_{i}}f(\theta^{k})}{\bar{Q}_{ii}}+\theta^{k}_{i},0).
    $$
    Then noticing that $\bar{Q}_{ii} = AA^{T}_{ii}$ for this block, we obtain the update for the block $\gamma$. 

    There are no constraints on the variables in the blok $\mu$, so the update comes from the fact that $\bar{Q}_{ii}= \Gamma\Gamma^{T}_{ii}$  for $i\in \{ 2n+p+k_{1}+1,\hdots,2n+p+k_{1}+k_{2} \}$ which corresponds to the indices of the block $\mu$.

 From these updates we have to make sure that $Q_{ii}+Q_{jj}-2Q_{ij}\neq 0$, let us recall that $Q_{ij}=\langle X_{i:}, X_{j:}\rangle$ which means that $Q_{ii}+Q_{jj}-2Q_{ij}= ||X_{i:}-X_{j:}||^{2}$. This quantity is zero only when $X_{i:}=X_{j:}$ coordinate wise. It would mean that the same row appears two times in the design matrix which does not bring any new information for the regression and can be avoided easily. $(AA^{T})_{ii}=\langle A_{i:}, A_{i:}\rangle$ is zero if and only if $A_{i:}=0$ which means that a row of the matrix A is zero, so there is no constraint on any variable of the optimization problem which will never happen. It is the same discussion for $(\Gamma\Gamma^{T})_{ii}$.

    The next proposition makes sure that once a variable (resp. pair of variables) is updated, it cannot be a violating variable (resp. pair of variables) at the next step. This proposition makes sure, for the two blocks $\alpha$ and $\alpha^{*}$, that the update $t^{*}$ cannot be $0$. 
    \begin{proposition}
        \label{prop_non_violating}
        If $(i,j)$ (resp.$i$) was the pair of most violating variable (resp. the most violating variable) in the block $\alpha$ or $\alpha^{*}$  (resp. block $\gamma$ or $\mu$) at iteration $k$ then at iteration $k+1$, $(i,j)$ (resp. $i$) cannot be violating the optimality conditions.  
    \end{proposition}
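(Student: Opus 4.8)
The plan is to handle the four blocks separately. The arguments for the blocks $\alpha$ and $\alpha^{*}$ are identical, and the blocks $\gamma$ and $\mu$ reduce to a one-line observation about the unconstrained minimizer of a one-variable quadratic, so the real work is the $\alpha$ block. Throughout I use that $f$ is quadratic, hence $\nabla f$ is affine, so that for any move $\theta^{k+1}=\theta^{k}+v$ we have $\nabla f(\theta^{k+1})=\nabla f(\theta^{k})+\bar Q v$; I will apply this with $v$ supported on the updated coordinates.

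First consider the block $\alpha$ with selected pair $(i,j)$. Since the block was chosen and the stopping test failed, we have $i\in I_{\text{up}}(\alpha^{k})$, $j\in I_{\text{low}}(\alpha^{k})$ and $\nabla_{\alpha_{i}}f(\theta^{k})<\nabla_{\alpha_{j}}f(\theta^{k})$ (the orientation $i\in I_{\text{low}}$, $j\in I_{\text{up}}$ is symmetric). I would first show $t^{*}>0$, which is exactly the second assertion of the proposition: from $\alpha_{i}^{k}<\tfrac{C}{n}$ and $\alpha_{j}^{k}>0$ we get $I_{1}=\max(-\alpha_{i}^{k},\alpha_{j}^{k}-\tfrac{C}{n})\le 0$ and $I_{2}=\min(\alpha_{j}^{k},\tfrac{C}{n}-\alpha_{i}^{k})>0$, while $\bar Q_{ii}+\bar Q_{jj}-2\bar Q_{ij}=||X_{i:}-X_{j:}||^{2}>0$ (no duplicated rows) together with $\nabla_{\alpha_{i}}f(\theta^{k})-\nabla_{\alpha_{j}}f(\theta^{k})<0$ forces $t_{q}>0$; hence $t^{*}=\min(t_{q},I_{2})>0$. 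Next I would record the identity $\psi'(t^{*})=\nabla_{\alpha_{i}}f(\theta^{k+1})-\nabla_{\alpha_{j}}f(\theta^{k+1})$: both sides equal $\nabla_{\alpha_{i}}f(\theta^{k})-\nabla_{\alpha_{j}}f(\theta^{k})+t^{*}(\bar Q_{ii}+\bar Q_{jj}-2\bar Q_{ij})$, using the affine form of $\nabla f$ and the expression of $\psi$ from the derivation of \cref{def_updates}. Since $\psi$ is strictly convex and $t^{*}=\min(t_{q},I_{2})\le t_{q}$ with $t_{q}$ the root of $\psi'$, we get $\psi'(t^{*})\le 0$, with equality exactly when $t^{*}=t_{q}$.

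Now I would split into the two remaining cases. If $t^{*}=t_{q}$, then $\nabla_{\alpha_{i}}f(\theta^{k+1})=\nabla_{\alpha_{j}}f(\theta^{k+1})$, so neither strict inequality in the definition of a violating pair can hold and $(i,j)$ is not violating at step $k+1$. If $t^{*}=I_{2}<t_{q}$, then $\nabla_{\alpha_{i}}f(\theta^{k+1})<\nabla_{\alpha_{j}}f(\theta^{k+1})$, which rules out the orientation "$i\in I_{\text{low}}$, $j\in I_{\text{up}}$, $\nabla_{\alpha_{i}}f>\nabla_{\alpha_{j}}f$"; for the other orientation I use the clipping: either $I_{2}=\alpha_{j}^{k}$, whence $\alpha_{j}^{k+1}=0$ and $j\notin I_{\text{low}}(\alpha^{k+1})$, or $I_{2}=\tfrac{C}{n}-\alpha_{i}^{k}$, whence $\alpha_{i}^{k+1}=\tfrac{C}{n}$ and $i\notin I_{\text{up}}(\alpha^{k+1})$ — in both subcases the membership condition of the first orientation fails. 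Hence $(i,j)$ cannot violate the optimality conditions at step $k+1$; the block $\alpha^{*}$ is word-for-word the same. For the block $\gamma$ with selected index $i$, the selection gives $\nabla_{\gamma_{i}}f(\theta^{k})<0$, and since $(AA^{T})_{ii}=||A_{i:}||^{2}>0$ the candidate $-\nabla_{\gamma_{i}}f(\theta^{k})/(AA^{T})_{ii}+\gamma_{i}^{k}$ is strictly larger than $\gamma_{i}^{k}\ge 0$, so the $\max$ with $0$ is inactive and $\gamma_{i}^{k+1}$ is precisely the unconstrained minimizer of the one-variable quadratic subproblem; therefore its derivative vanishes, $\nabla_{\gamma_{i}}f(\theta^{k+1})=0$, and $\nabla_{\gamma_{i}}f(\theta^{k+1})+\tau=\tau>0$, so $i$ is not violating. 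The block $\mu$ is the same but without clipping: $\mu_{i}^{k+1}$ is the unconstrained minimizer, $\nabla_{\mu_{i}}f(\theta^{k+1})=0$, and $|\nabla_{\mu_{i}}f(\theta^{k+1})|=0\le\tau$.

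The step I expect to be the main obstacle is the clipped subcase of the $\alpha$/$\alpha^{*}$ block: there the updated gradient difference still points the "wrong way" for the first orientation, so one cannot conclude from the gradient values alone and must combine the sign information $\psi'(t^{*})<0$ with the precise identification of which box bound is attained ($\alpha_{j}^{k+1}=0$ or $\alpha_{i}^{k+1}=\tfrac{C}{n}$) to eject the pair from $I_{\text{up}}\times I_{\text{low}}$. The rest is routine bookkeeping with the KKT index sets and the convexity of the one-dimensional subproblems.
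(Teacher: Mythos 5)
Your proposal is correct and rests on the same ingredients as the paper's own proof: the affine gradient-update identities (\cref{lemma_gradient} and \cref{gradient_update}), the sign of $t_{q}$ forced by the violation at iteration $k$, and the fact that a clipped update sends $\alpha_{j}$ to $0$ or $\alpha_{i}$ to $\tfrac{C}{n}$, ejecting the pair from $I_{\text{up}}\times I_{\text{low}}$, together with the vanishing gradient for the $\gamma$ and $\mu$ blocks. The only difference is organizational: you compress the paper's edge-by-edge case analysis of the square $[0,\tfrac{C}{n}]^{2}$ into a single two-case argument via $\psi'(t^{*})\le 0$, and you observe that for the $\gamma$ block the clipped case cannot occur when the selected variable is violating — a mild streamlining of the same route rather than a different proof.
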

    The proof of this proposition is left in the \cref{proof_non_violating}. 

    Finally, we show that the algorithm converges to a solution of \cref{dual_polyhedral_svr} and since strong duality holds it allows us to have a solution of \cref{polyhedral_svr}. 
    \begin{theorem}
        \label{convergence_theorem}
        For any given $\tau>0$ the sequence of iterates $\{\theta^{k}\}$, defined by the generalized SMO algorithm, converges to an optimal solution of the optimization problem \cref{dual_polyhedral_svr}
    \end{theorem}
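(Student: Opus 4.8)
The plan is to adapt the convergence argument of \cite{Keerthi} for classical SMO to the present four-block structure: the blocks $\alpha$ and $\alpha^{*}$, coupled by the two equality constraints; the nonnegativity-constrained block $\gamma$; and the free block $\mu$. First I would record two elementary facts. Since $\bar{Q}$ is a Gramian, $f$ is convex; and since strong duality holds for \cref{polyhedral_svr} (item~1 of \cref{dual_prop}) while \cref{polyhedral_svr} is feasible with nonnegative objective, $\inf_{\mathcal{F}}f$ is finite. Moreover every iteration minimizes $f$ exactly over a feasible segment (for the $\alpha$, $\alpha^{*}$ blocks) or a feasible half-line or line (for $\gamma$, $\mu$) through $\theta^{k}$, so $\{f(\theta^{k})\}$ is non-increasing; with the lower bound it therefore converges and $f(\theta^{k})-f(\theta^{k+1})\to 0$.

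Next I would quantify the per-step decrease from the closed forms of \cref{def_updates}. Denoting by $\sigma_{k}$ the curvature of the one-dimensional subproblem solved at step $k$ — one of $||X_{i:}-X_{j:}||^{2}$, $||A_{i:}||^{2}$, $||\Gamma_{i:}||^{2}$, all contained in $[\sigma_{\min},\rho]$ with $\sigma_{\min}>0$ whenever $X$ has no repeated rows and $A,\Gamma$ have no zero rows — a short computation gives $f(\theta^{k})-f(\theta^{k+1})=\Delta_{k}t^{*}-\tfrac12\sigma_{k}(t^{*})^{2}$ with $0<t^{*}\le\Delta_{k}/\sigma_{k}$, whence the decrease is at least $\tfrac12\sigma_{k}(t^{*})^{2}$ and at least $\tfrac12\Delta_{k}t^{*}$, and it equals $\Delta_{k}^{2}/(2\sigma_{k})\ge\tau^{2}/(2\rho)$ whenever $t^{*}$ is not clipped. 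A small but decisive observation is that a $\gamma$-step or a $\mu$-step is never clipped: a $\tau$-violating index of the $\gamma$ block satisfies $\nabla_{\gamma_{j}}f<-\tau<0$, and $\mu$ carries no constraint, so in both cases the unconstrained one-dimensional minimizer is already feasible. Hence every non-clipped step — in particular every $\gamma$- and every $\mu$-step, and every $\alpha$/$\alpha^{*}$-step whose optimum falls inside the box — cuts $f$ by at least $\tau^{2}/(2\rho)>0$, so only finitely many such steps can occur; past some index $K_{0}$ every step is a clipped update in the block $\alpha$ or $\alpha^{*}$, so $\gamma^{k}$ and $\mu^{k}$ are constant for $k\ge K_{0}$ and $(\alpha^{k},(\alpha^{*})^{k})$ remains in the compact box $[0,C/n]^{2n}$.

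Finally I would conclude by contradiction. If the loop never stopped, then from $K_{0}$ on we would be in the purely-clipped $\alpha$/$\alpha^{*}$ regime; there I would run the combinatorial argument of \cite{Keerthi}, using \cref{prop_non_violating} (a just-updated pair is non-violating at the next iteration) together with the two equality constraints, to show that no ordered pair of indices can be the selected most-violating pair at infinitely many iterations — which, there being only finitely many pairs, contradicts $\Delta_{k}>\tau$ for all $k$. Hence the algorithm terminates after finitely many steps, $\{\theta^{k}\}$ is eventually constant and therefore convergent, and its limit $\bar\theta$ satisfies, block by block, the optimality conditions derived in \cref{SMO} (within the tolerance $\tau$, handled as in \cite{Keerthi}); since $f$ is convex and \cref{dual_polyhedral_svr} is feasible (\cref{dual_prop}), such a point is a global optimum of \cref{dual_polyhedral_svr}, and by strong duality it yields a solution of \cref{polyhedral_svr}. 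I expect the last step to be the main obstacle — excluding an infinite run of clipped $\alpha$/$\alpha^{*}$ updates: compared with classical SMO the bookkeeping is heavier because three qualitatively different subproblems are interleaved, so one must verify that \cref{prop_non_violating} forbids cycling \emph{across} blocks and not merely within a single one, and the reduction of the previous paragraph (eventually only clipped $\alpha$/$\alpha^{*}$ steps) is what simultaneously makes the classical combinatorial argument applicable and removes the a priori unboundedness of the iterates $\gamma^{k}$ and $\mu^{k}$.
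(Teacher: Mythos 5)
Your quantitative bookkeeping (the per-step decrease $\Delta_{k}t^{*}-\tfrac12\sigma_{k}(t^{*})^{2}$, the bound $\Delta_{k}^{2}/(2\sigma_{k})$ for unclipped steps, and the observation that a selected $\gamma$- or $\mu$-step is never clipped) is sound, but the overall strategy does not prove the stated theorem. Your argument runs as follows: while the loop is active, every unclipped step decreases $f$ by at least $\tau^{2}/(2\rho)$, so only finitely many unclipped steps occur; an infinite tail of clipped $\alpha/\alpha^{*}$ steps is excluded by a Keerthi-type shuffling argument; hence the algorithm halts in finite time. What you obtain at that point is an iterate whose violation scores are all at most $\tau$, i.e.\ a $\tau$-approximate KKT point. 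The final inference --- that such a point ``is a global optimum of \cref{dual_polyhedral_svr}'' because $f$ is convex --- is where the proof breaks: satisfying the block optimality conditions only up to tolerance $\tau$ does not make a point optimal, only suboptimal by an amount controlled by $\tau$. So your route establishes finite termination of the $\tau$-stopping rule, which is strictly weaker than the claim of \cref{convergence_theorem} that the iterates converge to an optimal solution of \cref{dual_polyhedral_svr}. Note also the structural tension: your reduction ``eventually only clipped $\alpha/\alpha^{*}$ updates'' is exactly the situation the paper's lemma on the existence of a clipping-free subsequence rules out for a non-terminating run; combining the two pieces can only ever yield termination, never exact optimality of a limit.

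The paper's proof takes a different, asymptotic route that avoids this obstruction: it shows the dual objective sequence is monotone and convergent (\cref{lemma_decrease}); it extracts an infinite subsequence of iterations where no clipping occurs, along which the \emph{exact} violation score $\Delta^{k_{j}}$ tends to $0$ (\cref{delta_convergence}); it proves this subsequence is bounded (\cref{lemma_bounded}), extracts a limit point $\bar{\theta}$, and shows by a continuity argument that $\bar{\theta}$ satisfies the exact optimality conditions, hence is a true dual solution; finally the primal--dual distance bound of \cref{lemma_bound}, $\tfrac12\|\beta^{k}-\beta^{\text{opt}}\|^{2}\leq f(\theta^{k})-f(\theta^{\text{opt}})$, transfers the convergence $f(\theta^{k})\to f(\bar{\theta})$ of the whole sequence to convergence of the primal iterates. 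To repair your proposal you would have to replace the termination-based ending with an argument of this kind (a limit point at which the violation scores vanish exactly), since no amount of combinatorics on clipped updates can upgrade a $\tau$-approximate stopping point to an exact optimizer.
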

    
    The proof of this theorem relies on the same idea as the one proposed in \cite{Lopez} for the classical SMO algorithm and is given in \cref{proof_theorem}. We show that it can be extended to our algorithm with some new observations. The general idea of the proof is to see that the distance between the primal vector generated by the SMO-algorithm and the optimal solution of the primal is controled by the following expression $\frac{1}{2}||\beta^{k}-\beta^{\text{opt}}||\leq f(\theta^{k})-f(\theta^{\text{opt}})$, where $\beta^{k}$ is the $k^{th}$ primal iterate obtained via the relationship primal-dual and $\theta^{k}$ and where $\beta^{\text{opt}}$ is a solution of \cref{polyhedral_svr}. From this observation, we show that we can find a subsequence of the SMO-algorithm $\theta^{k_{j}}$ that converges to some $\bar{\theta}$, solution of the dual problem. Using the continuity of the objective function of the dual problem, we have that $f(\theta^{k_{j}})\rightarrow f(\bar{\theta})$. Finally, we show that the sequence $\{f(\theta^{k})\}$ is decreasing and bounded which implies its convergence and from the convergence monotone theorem we know that  to $f(\theta^{k})$ converges to $f(\bar{\theta})$ since one of its subsequence converges. This proves that $||\beta^{k}-\beta^{\text{opt}}||\rightarrow 0$ and finishes the proof. The convergence rate for the SMO algorithm is difficult to obtain considering the greedy choice of the blocks and the greedy choice inside the blocks. A proof for the classical SMO exists but with uniformly at random choice of the block \cite{She2017}. Convergence rate for greedy algorithms in optimization can be found in \cite{Nutini} for example but the assumption that the constraints must be separable is a major issue for our case. The study of this convergence rate is out of scope of this paper. 

\section{Numerical experiments}
\label{expe}

The code for the different regression settings is available on a GitHub repository\footnote{\url{https://github.com/Klopfe/LSVR}}, each setting is wrapped up in a package and is fully compatible with scikit learn \cite{scikit-learn} \texttt{BaseEstimator} class.

In order to compare the estimators, we worked with the Mean Absolute Error (MAE) and the Root Mean Squared Error (RMSE) which are given by the following expressions: 

$$
\mae = \frac{1}{p}\sum_{i=1}^{p} |\beta^{*}_{i}-\hat{\beta}_{i}|,
$$

$$
\rmse=\sqrt{\frac{1}{p}||\beta^{*}-\hat{\beta}||^{2}},
$$
where $\beta^{*}$ are the ground truth coefficients and $\hat{\beta}$ are the estimated coefficients. We also used the Signal-To-Noise Ratio (SNR) to control the level noise simulated in the data. We used the following definition: 
$$
\snr=10\log 10(\frac{\mathbb{E}(X\beta(X\beta)^{T}             )}{\Var(\epsilon)}).
$$

\subsection{Non Negative regression}
First, the constraints are set to force the coefficient of $\beta$ to be positive and we compare our constrained-SVR estimator with the NNLS \cite{Lawson} estimator which is the result of the following optimization problem: 
\begin{equation}\tag{\text{NNLS}}
    \begin{aligned}
    & \underset{\beta}{\min}
    &\frac{1}{2}||y-X\beta|| \\
    & \text{subject to}
    &\beta_{i}\geq 0. \\
\end{aligned}
    \label{NNLS} 
\end{equation}
In this special case of non-negative regression, $A=-I_{p}$, $b=0$, $C=0$, $d=0$, the constrained-SVR optimization problem which we will call Non-Negative SVR (NNSVR) then becomes: 
\begin{equation}\tag{\text{NNSVR}}
    \begin{aligned}
    & \underset{\beta,\beta_{0},\xi_{i},\xi_{i}^{*},\epsilon}{\min}
    &\frac{1}{2}||\beta||^{2}+C(\nu\epsilon + \frac{1}{n}\sum_{i=1}^{n}(\xi_{i}+\xi_{i}^{*})) \\
    & \text{subject to}
    &\beta^{T}X_{i:}+\beta_{0} -y_{i}\leq \epsilon + \xi_{i} \\
    & & y_{i}-\beta^{T}X_{i:}-\beta_{0}\leq \epsilon+\xi_{i}^{*} \\
    & & \xi_{i},\xi_{i}^{*}\geq 0,\epsilon\geq 0 \\
    & & \beta_{i} \geq 0. \\
    \end{aligned}
    \label{nn-lssvr} 
\end{equation}

\paragraph{Synthetic data}
We generated the design matrix $X$ from a gaussian distribution $\mathcal{N}(0,1)$ with 500 samples and 50 features. The true coefficients to be found $\beta^{*}$ were gererated taking the exponential of a gaussian distribution $\mathcal{N}(0,2)$ in order to have positive coefficients. $Y$ was simply computed as the product between $X$ and $\beta^{*}$. We wanted to test the robustness of our estimator compared to NNLS and variant of SVR estimators. To do so, we simulated noise in the data using different types of distributions, we tested gaussian noise and laplacian noise under different levels of noise. For this experiment, the noise distributions were generated to have a SNR equals to 10 and 20, for each type of noise we performed 50 repetitions. The noise was only added in the matrix $Y$ the design matrix $X$ was left noiseless. We compared different estimators NNLS, NNSVR, the Projected-SVR (P-SVR) which is simply the projection of the classical SVR estimator unto the positive orthant and also the classical SVR estimator without constraints. The results of this experiment are in \cref{table1}. We see that for a low gaussian noise level ($\snr = 20$) the NNLS has a lower RMSE and lower MAE. However, we see that the differences between the four compared methods are small. When the level of noise increases ($\snr = 10$), the NNSVR estimator is the one with the lowest RMSE and MAE. The NNLS estimator performs poorly in the presence of high level of noise in comparison to the SVR based estimator. When a laplacian noise is added to the data, the NNSVR is the estimator that has the lowest RMSE and MAE for low level of noise $\snr = 20$ and high level of noise $\snr = 10$. 

\begin{table}
\caption{Results for the Support Vector Regression (SVR), Projected Support Regression (P-SVR), Non-Negative Support Vector Regression (NNSVR) and Non-Negative Least Squares (NNLS) for simulated data with $n= 500$ and $p=50$. The mean (standard deviation) of the Root Mean Squared Error (RMSE) and the Mean Absolute Error (MAE) over 50 repetitions are reported. Different noise distribution (gaussian and laplacian) and different Signal to Noise Ratio (SNR) values were tested.  }
\label{table1}
\centering
\begin{tabular}{|c|c|c|c|}
    \hline
    \textbf{Distribution} & \textbf{Estimator} & \textbf{RMSE} & \textbf{MAE} \\
    \hline
     \multirow{2}{*}{Gaussian noise } & SVR & 2.238 (0.081)& 29.288 (2.452) \\
   
    & P-SVR & 2.178 (0.087) & 27.248 (2.545)\\
    
    SNR = 20 & NNSVR & 2.174 (0.089)& 27.224 (2.480)\\
    ($\sigma = 773.1$) & NNLS & \textbf{2.120 (0.114)} & \textbf{25.226 (2.699)}\\
    
    \hline
     \multirow{2}{*}{Gaussian noise } & SVR & 2.732 (0.099)& 44.764 (4.230) \\
   
    & P-SVR & 2.584 (0.154) & 39.687 (5.963)\\
    
    SNR = 10 & NNSVR & \textbf{2.536 (0.105)}& \textbf{37.740 (3.866)}\\
    ($\sigma = 2444.9$) & NNLS & 3.478 (0.208) & 60.553 (7.923)\\
    \hline
    \multirow{2}{*}{Laplacian noise} & SVR & 2.086 (0.109) & 25.538 (3.181) \\
   
    & P-SVR & 2.039 (0.109) & 23.978 (3.059) \\
    
    SNR = 20& NNSVR & \textbf{2.035 (0.115)} & \textbf{23.827 (3.146)} \\
    (b = 546.7) & NNLS & 2.115 (0.103) & 25.028 (2.571) \\
    \hline
    \multirow{2}{*}{Laplacian noise} & SVR & 2.665 (0.148) & 42.245 (5.777) \\
   
    & P-SVR & 2.526 (0.198) & 37.745 (7.271) \\
    
    SNR = 10& NNSVR & \textbf{2.480 (0.157)} & \textbf{35.786 (5.761)} \\
    (b = 1728.8) & NNLS & 3.463 (0.230) & 63.940 (8.375) \\
    \hline
 \end{tabular}
\end{table}

\subsection{Regression unto the simplex}
\label{simplex_simu}
 In this subsection, we study the performance of our proposed estimator on simplex constraints Simplex Support Vector Regression (SSVR). In this case, $A=-I_{p}$, $b = 0$, $\Gamma = \textbf{e}$ and $d=1$. The optimization problem that we seek to solve is: 

\begin{equation}\tag{\text{SSVR}}
    \begin{aligned}
    & \underset{\beta,\beta_{0},\xi_{i},\xi_{i}^{*},\epsilon}{\min}
    &\frac{1}{2}||\beta||^{2}+C(\nu\epsilon + \frac{1}{n}\sum_{i=1}^{n}(\xi_{i}+\xi_{i}^{*})) \\
    & \text{subject to}
    &\beta^{T}X_{i:}+\beta_{0} -y_{i}\leq \epsilon + \xi_{i} \\
    & & y_{i}-\beta^{T}X_{i:}-\beta_{0}\leq \epsilon+\xi_{i}^{*} \\
    & & \xi_{i},\xi_{i}^{*}\geq 0,\epsilon\geq 0 \\
    & & \beta_{i} \geq 0 \\
    & & \sum_{i}\beta_{i} = 1. \\
    \end{aligned}
    \label{lssvr}
\end{equation}

\paragraph{Synthetic data}
 We first tested on simulated data generated by the function \texttt{make\_regression} of scikit-learn. Once the design matrix $X$ and the response vector $y$ were generated using this function, we had access to the ground truth that we will write $\beta^{*}$. This function was not designed to generate data with a $\beta^{*}$ that belongs to the simplex so we first projected $\beta^{*}$ unto the simplex and then recomputed $y$ multiplying the design matrix by the new projected $\beta^{*}_{\mathcal{S}}$. We added a centered gaussian noise   in the data with the standard deviation of the gaussian was chosen such as the signal-to-noise ratio (SNR) was equal to a defined number, we used the following formula for a given SNR: 
 $$
 \sigma = \sqrt{\frac{\Var(y)}{10^{SNR/10}}},
 $$
 where $\sigma$ is the standard deviation used to simulate the noise in the data.
The choice of the two hyperparameters $C$ and $\nu$ was done using 5-folds cross validation on a grid of possible pairs. The values of $C$ were taken evenly spaced in the $log10$ base between $[-3,3]$, we considered 10 different values. The values of $\nu$ were taken evenly spaced in the linear space between $[0.05,1.0]$ and we also considered 10 possible values. We tested different size for the matrix $X\in \mathbb{R}^{n\times p}$ to check the potential effects of the dimensions on the quality of the estimation and we did 50 repetitions for each point of the curves. The measure that was used to compare the different estimators is the RMSE between the true $\beta$ and the estimated $\hat{\beta}$.

We compared the RMSE of our estimator to the Simplex Ordinary Least Squares (SOLS) which is the result of the following optimization problem: 
\begin{equation}\tag{\text{SOLS}}
    \begin{aligned}
    & \underset{\beta}{\min}
    &\frac{1}{2}||y-X\beta|| \\
    & \text{subject to}
    &\beta_{i}\geq 0, \\
    & &\sum_{i=1}^{p} \beta_{i} = 1, \\
\end{aligned}
    \label{SOLS} 
\end{equation}
and to the estimator proposed in the biostatics litterature that is called Cibersort. This estimator is simply the result of using the classical SVR and project the obtained estimator unto the simplex. The RMSE curves as a function of the SNR are presented in \cref{simplex_simul}. We observe that the SSVR is generally the estimator with the lowest RMSE, this observation becomes clearer as the level of noise increases in the data. We notice that when there is a low level of noise and when $n$ is not too large in comparison to $p$, the three compared estimator perform equally. However, there is a setting when $n$ is large in comparison to $p$ (in this experiment for $n = 250$ or $500$ and $p = 5$) where the SSVR estimator has a higher RMSE than the Cibersort and SOLS estimator untill a certain level of noise ($\snr<15$). Overall, this simulation shows that there is a significant improvement in the estimation performance of the SSVR mainly when there is noise in the data. 

\begin{figure}
    \label{simplex_simul}
    \centering
    \includegraphics[scale=0.85 ]{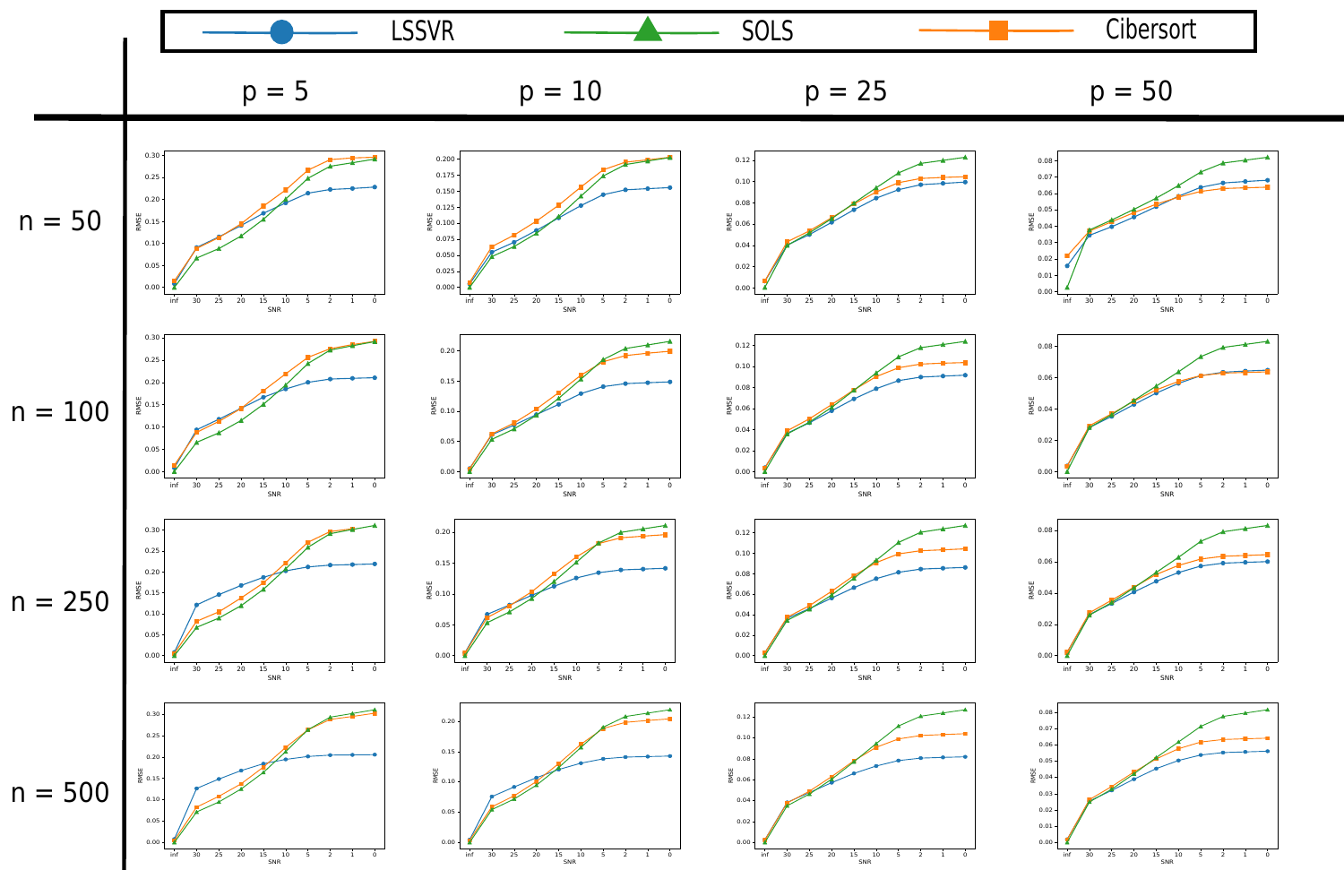}
    \caption{The Root Mean Squared Error (RMSE) as a function of the Signal to Noise Ration (SNR) is presented. Different dimensions for the design matrix $X$ and the response vector $y$ were considered. $n$ represents the number of rows of $X$ and $p$ the number of columns. For each plot, the blue line represents the RMSE for the Linear Simplex SVR (LSSVR) estimator, the green one the Simplex Ordinary Least Squares (SOLS) estimator and the orange on the Cibersort estimator. Each point of the curve is the mean RMSE of 50 repetitions. The noise in the data has a gaussian distribution. }
\end{figure}

\paragraph{Real dataset}
In the cancer research field, regression algorithms have been used to estimate the proportions of cell populations that are present inside a tumor. Indeed, a tumor is composed of different types of cells such as cancer cells, immune cells, healthy cells among others. Having access to the information of the proportions of these cells could be a key to understanding the interactions between the cells and the cancer treatment called immunotherapy \cite{immunotherapy}. The modelization done is that the RNA extracted from the tumor is seen as a mixed signal composed of different pure signals coming from the different types of cells. This signal can be unmixed knowing the different pure RNA signal of the different types of cells. In other words, $y$ will be the RNA signal coming from a tumor and $X$ will be the design matrix composed of the RNA signal from the isolated cells. The number of rows represent the number of genes that we have access to and the number of columns of $X$ is the number of cell populations that we would like to quantify. The hypothesis is that there is a linear relationship between $X$ and $y$. As said above, we want to estimate proportions which means that the estimator has to belong to the probability simplex $\mathscr{S}=\{x : x_{i}\geq 0\text{ , }\sum_{i}x_{i}=1\}$. \\
\\
Several estimators have been proposed in the biostatistics litterature most of them based on constrained least squares \cite{Qiao,Gong,Abbas} but the gold standard is the estimator based on the SVR.

We compared the three same estimators on a real biological dataset where the real quantities of cells to obtain were known. The dataset can be found on the GEO website under the accession code GSE11103\footnote{The dataset can be downloaded from the \href{https://www.ncbi.nlm.nih.gov/geo/}{Gene Expression Omnibus} website under the accession code GSE11103.}. For this example $n=584$ and $p=4$ and we have access to 12 different samples that are our repetitions. Following the same idea than previous benchmark performed in this field of application, we increased the level of noise in the data and compared the RMSE of the different estimators. gaussian and laplacian distributions of noise were added to the data. The choice of the two hyperparameters $C$ and $\nu$ was done using 5-folds cross validation on a grid of possible pairs. The values of $C$ were taken evenly spaced in the $\log_{10}$ base between $[-5,-3]$, we considered 10 different values. The interval of $C$ is different than the simulated data because of the difference in the range value of the dataset. The values of $\nu$ were taken evenly spaced in the linear space between $[0.05,1.0]$ and we also considered 10 possible values. 

We see that when there is no noise in the data ($\snr = \infty$) both Cibersort and SSVR estimator perform equally. The SOLS estimator already has a higher RMSE than the two others estimator probably due to the noise already present in the data. As the level of noise increases, the SSVR estimator remains the estimator with the lowest RMSE in both gaussian and laplacian noise settings.  

\begin{figure}
    \centering
    \includegraphics[scale=0.80 ]{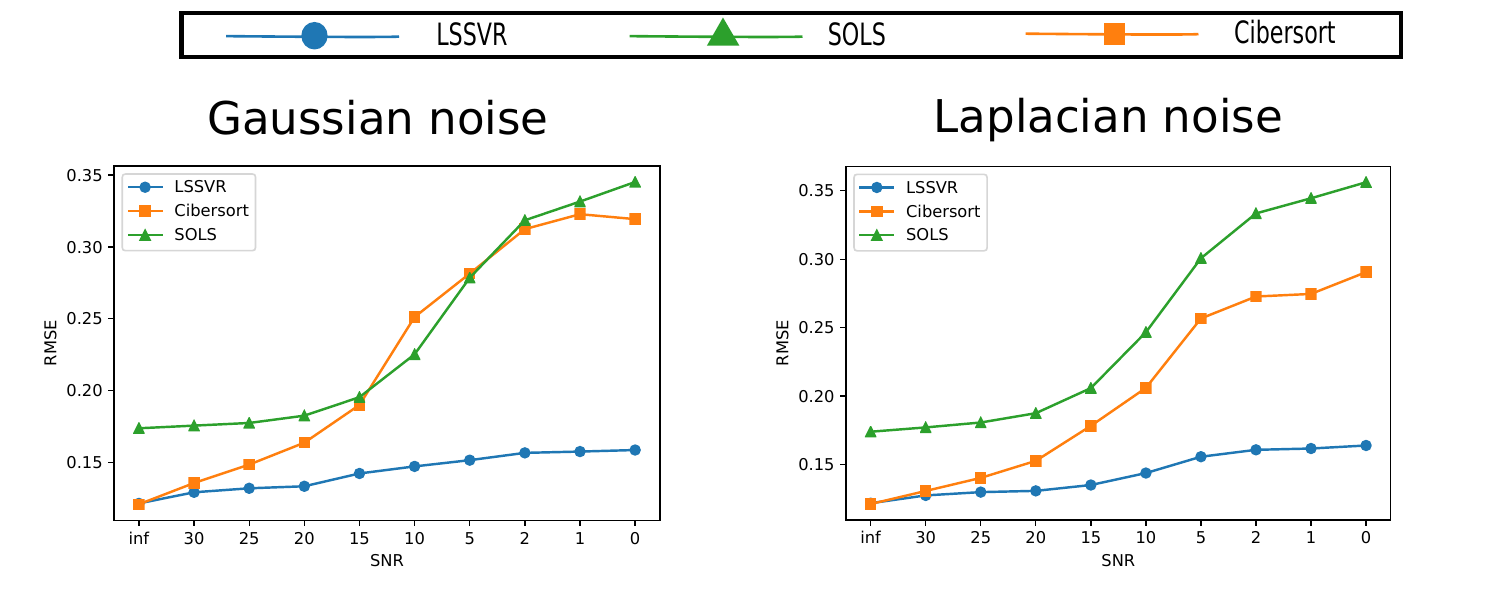}
    \caption{The Root Mean Squared Error (RMSE) as a function of the Signal to Noise Ration (SNR) is presented on a real dataset where noise was manually added. Two different noise distribution were tested: gaussian and laplacian. Each point of the curve is the mean RMSE of 12 different response vectors and we repeated the process four times for each level of noise. This would be equivalent to having 48 different repetitions. }
\end{figure}

\subsection{Isotonic regression}

In this subsection, we will consider constraints that impose an order on the variables. This type of regression is usually called isotonic regression. Such constraints appear when prior knowledge are known on a certain order on the variables. This partial order on the variables can also be seen as an acyclic directed graph. More formally, we note $G = (V, E)$ a directed acyclic graph where $V$ is the set of vertices and $E$ is the set of nodes. On this graph, we define a partial order on the vertices. We will say for $u, v\in V$ that $u\leq v$ if and only if there is a path joining $u$ and $v$ in $G$. This type of constraints seems natural in different applications such as biology, medicine, weather forecast.

The most simple example of this type of constraints might be the monotonic regression where we force the variables to be in a increasing or decreasing order. It means that with our former notations that we would impose that $\beta_{1}\leq \beta_{2} \leq \hdots \leq \beta_{p}$ on the estimator.
This type of constraints can be coded in a finite difference matrix (or more generally any incidence matrix of a graph)
\begin{equation*}
    A =\begin{bmatrix} 
        1 & -1 & 0 & \hdots & 0 \\
        0 & 1 & -1 & \ddots & \vdots \\
        \vdots  & \ddots & \ddots & \ddots & 0\\
        0 & \hdots & 0 & 1 & -1
        \end{bmatrix}
\end{equation*}
and $\Gamma = 0$, $b=0$, $d=0$ forming linear constraints as in the scope of this paper. The Isotonic Support Vector Regression (ISVR) optimization problem is written as follows:

\begin{equation}\tag{\text{ISVR}}
    \begin{aligned}
    & \underset{\beta,\beta_{0},\xi_{i},\xi_{i}^{*},\epsilon}{\min}
    &\frac{1}{2}||\beta||^{2}+C(\nu\epsilon + \frac{1}{n}\sum_{i=1}^{n}(\xi_{i}+\xi_{i}^{*})) \\
    & \text{subject to}
    &\beta^{T}X_{i:}+\beta_{0} -y_{i}\leq \epsilon + \xi_{i} \\
    & & y_{i}-\beta^{T}X_{i:}-\beta_{0}\leq \epsilon+\xi_{i}^{*} \\
    & & \xi_{i},\xi_{i}^{*}\geq 0,\epsilon\geq 0 \\
    & & \beta_{1} \leq \beta_{2}\leq \hdots \leq \beta_{n}. \\
    \end{aligned}
    \label{isvr}
\end{equation}

We compare our proposed ISVR estimator with the classical least squares isotonic regression (IR) \cite{isotonic} which is the solution of the following problem:

\begin{equation}\tag{\text{IR}}
    \begin{aligned}
    & \underset{\beta}{\min}
    &\frac{1}{2}||\beta-y||^{2} \\
    & \text{subject to} & \beta_{1} \leq \beta_{2}\leq \hdots \leq \beta_{n}. \\
    \end{aligned}
    \label{isotonic_ls}
\end{equation}

\paragraph{Synthetic dataset}
We first generated data from a gaussian distribution ($\mu=0$, $\sigma=1$) that we sorted and then added noise in the data following the same process as described in \cref{simplex_simu} with different SNR values (10 and 20). We tested gaussian noise and laplacian noise. 
We compared the estimation quality of both methods using MAE and RMSE. In this experiment, the design matrix $X$ is the identity matrix. We performed grid search selection via cross validation for the hyperparameters $C$ and $\nu$. 
$C$ had 5 different possible values taken on the logscale from 0 to 3, and $\nu$ had 5 different values taken between 0.05 and 1 on the linear scale.
The dimension of the generated gaussian vector was 50 and we did 50 repetitions. 
We present in \cref{table2} the results of the experiment, the value inside a cell is the mean RMSE or MAE over the 50 repetitions and the value between brackets is the standard deviation over the repetitions. Under a low level of gaussian noise or laplacian noise, both methods are close in term of RMSE and MAE with a little advantage for the classical isotonic regression estimator. When the level of noise is important ($\snr = 10$), our proposed ISVR has the lowest RMSE and MAE for the two noise distribution tested.

\begin{table}[t]
    \caption{Results for the Isotonic Support Vector Regression (ISVR), and the Isotonic regression (IR) for simulated data with $p=50$. The mean (standard deviation) of the Root Mean Squared Error (RMSE) and the Mean Absolute Error (MAE) over 50 repetitions are reported. Different noise distribution (gaussian and laplacian) and different Signal to Noise Ratio (SNR) values were tested.}
    \label{table2}
    \centering
    \begin{tabular}{|c|c|c|c|}
        \hline
        \textbf{Distribution} & \textbf{Estimator} & \textbf{RMSE} & \textbf{MAE} \\
        \hline
         Gaussian noise & ISVR & 0.212 (0.02) & 0.254 (0.06)  \\
       
         SNR = 20 & IR & \textbf{0.203 (0.02)} & \textbf{0.229 (0.04)}\\

        \hline
         Gaussian noise & ISVR & \textbf{0.284 (0.04)} & \textbf{0.446 (0.12)}  \\
        SNR = 10 & IR & 0.311 (0.04) & 0.534 (0.12) \\
        \hline
        Laplacian noise & ISVR & \textbf{0.202 (0.03)} &  0.223 (0.05) \\
        SNR = 20& IR & 0.203 (0.02) & \textbf{0.221 (0.04)} \\
        \hline
        Laplacian noise & ISVR & \textbf{0.276 (0.05)}  & \textbf{0.414 (0.11)} \\
        SNR = 10& IR & 0.312 (0.05) & 0.513 (0.13) \\
        \hline
     \end{tabular}
    \end{table}

\paragraph{Real dataset}
Isotonic types of constraints can be found in different applications such as biology, ranking and weather forecast for example. Focusing on global warming type of data, reserchers have studied the anomaly of the average temperature over a year in comparison to the years 1961-1990. These temperature anomalies have a monotenous trend and keep increasing since 1850 untill 2015. Isotonic regression estimator was used on this dataset\footnote{This dataset can be downloaded from the \href{https://cdiac.ess-dive.lbl.gov/trends/temp/jonescru/jones.html}{Carbon Dioxide Information Analysis Center} at the Oak Ridge National Laboratory.} in \cite{ConstrainedLasso} and we compared our proposed ISVR estimator for anomaly prediction. The hyperparameter for the ISVR were set manually for this simulation.
\cref{weather} shows the result for the two estimators. The classical isotonic regression estimator perform better than our proposed estimator globally which is confirmed by the RMSE and MAE values of $\rmse_{IR} = 0.0067$ against $\rmse_{ISVR} = 0.083$ and $\mae_{IR} = 0.083$ against $\mae_{ISVR} = 0.116$. Howevever, we notice that in the portions where there is a significant change like between 1910-1940 and 1980-2005, the IR estimation looks like a step function whereas the ISVR estimation follows an increasing trend without these piecewise constant portions.
Note that the bias induced by the use of constraints can be overcome with reffiting methods such as~\cite{deledalle2017clear}.

\begin{figure}
    \label{weather}
    \centering
    \includegraphics[scale=0.65 ]{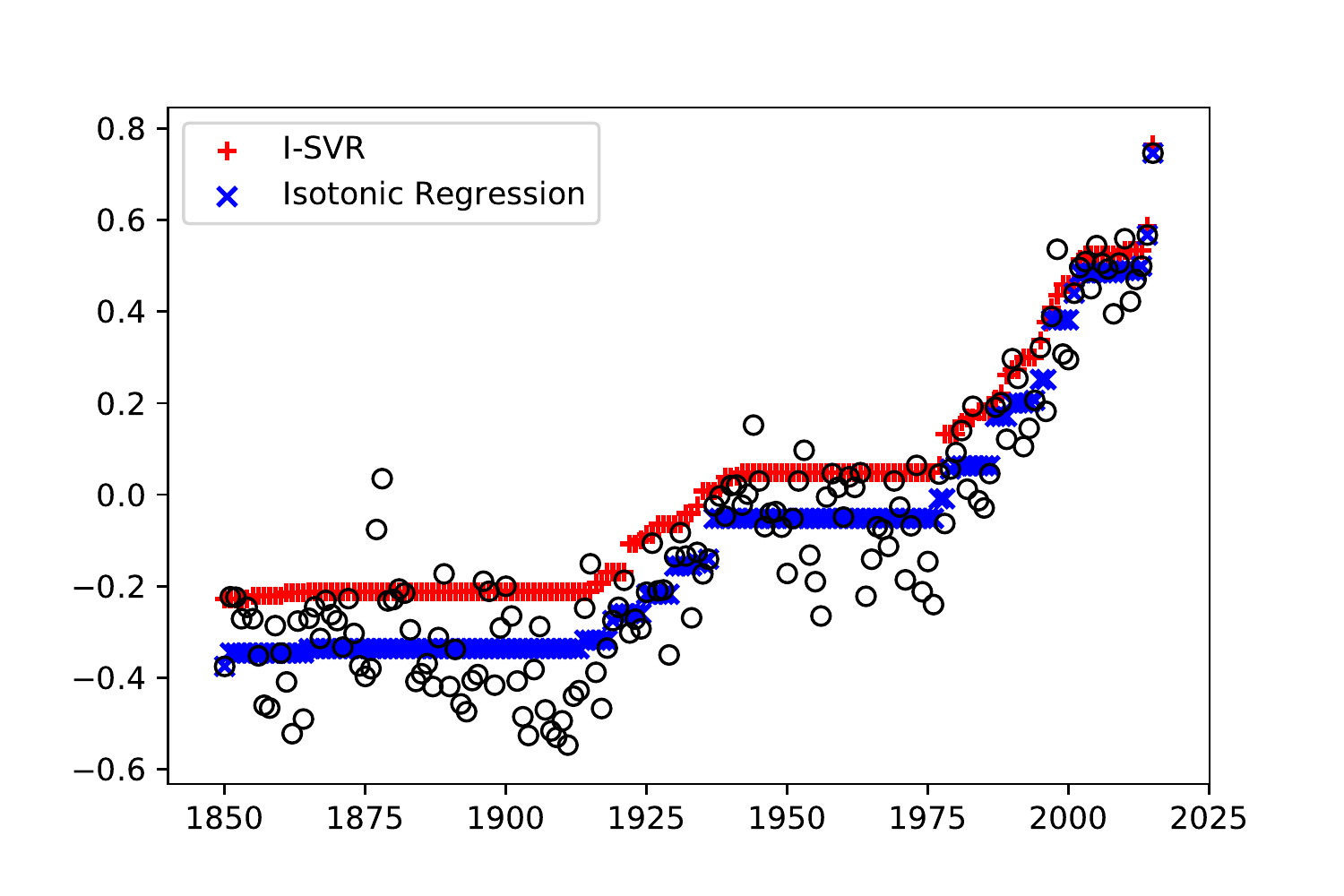}
    \caption{Global warming dataset. Annual temperature anomalies relative to 1961-1990 average, with estimated trend using Isotonic Support Vector Regression (ISVR) and the classical Isotonic Regression (IR) estimator. }
\end{figure}

\subsection{Performance of the GSMO versus SMO}
We compared the efficiency of the SMO algorithm to solve the classical SVR optimization problem and the SSVR optimization problem. To do so, we used the same data simulation process described earlier in this subsection and set the number of rows of the matrix $X$, $n = 200$ and the number of columns $p=25$. Two different settings were considered here, one without any noise in the data and another one with gaussian noise added such that the SNR would be equal to 30. The transparent trajectories represent the decrease of the objective function or the optimality score $\Delta$ for the classical SMO in blue and for the generalized SMO in red for the 50 repetitions considered. The average trajectory is represented in dense color. \cref{fig7:a} and \cref{fig7:b} are the results for the noiseless setting and  \cref{fig7:c} and \cref{fig7:d} for the setting with noise. 
When there is not noise in the data, the generalized SMO decreases faster than the classical SMO. It is important to remind that the true vector here belongs to the simplex so without any noise it is not surprising that our proposed algorithm goes faster than the classical SMO. However, when noise is adding to the data, it takes more iterations for the generalized SMO to find the solution of the optimization problem. 
\begin{figure}[t] 
    \begin{subfigure}[b]{0.5\linewidth}
      \centering
      \includegraphics[width=1.0\linewidth]{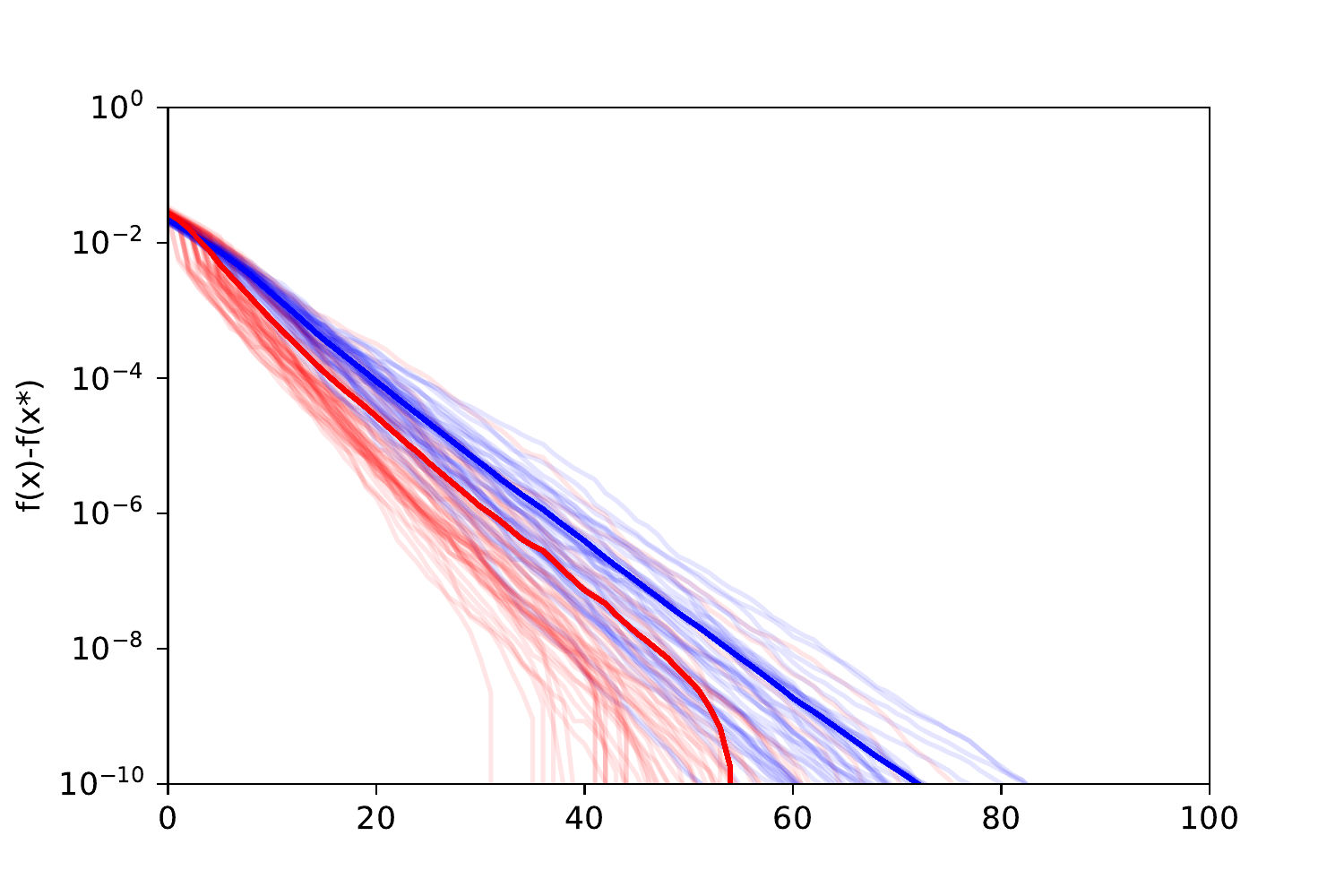} 
      \caption{Dual objective function without noise} 
      \label{fig7:a} 
      \vspace{4ex}
    \end{subfigure}
    \begin{subfigure}[b]{0.5\linewidth}
      \centering
      \includegraphics[width=1.0\linewidth]{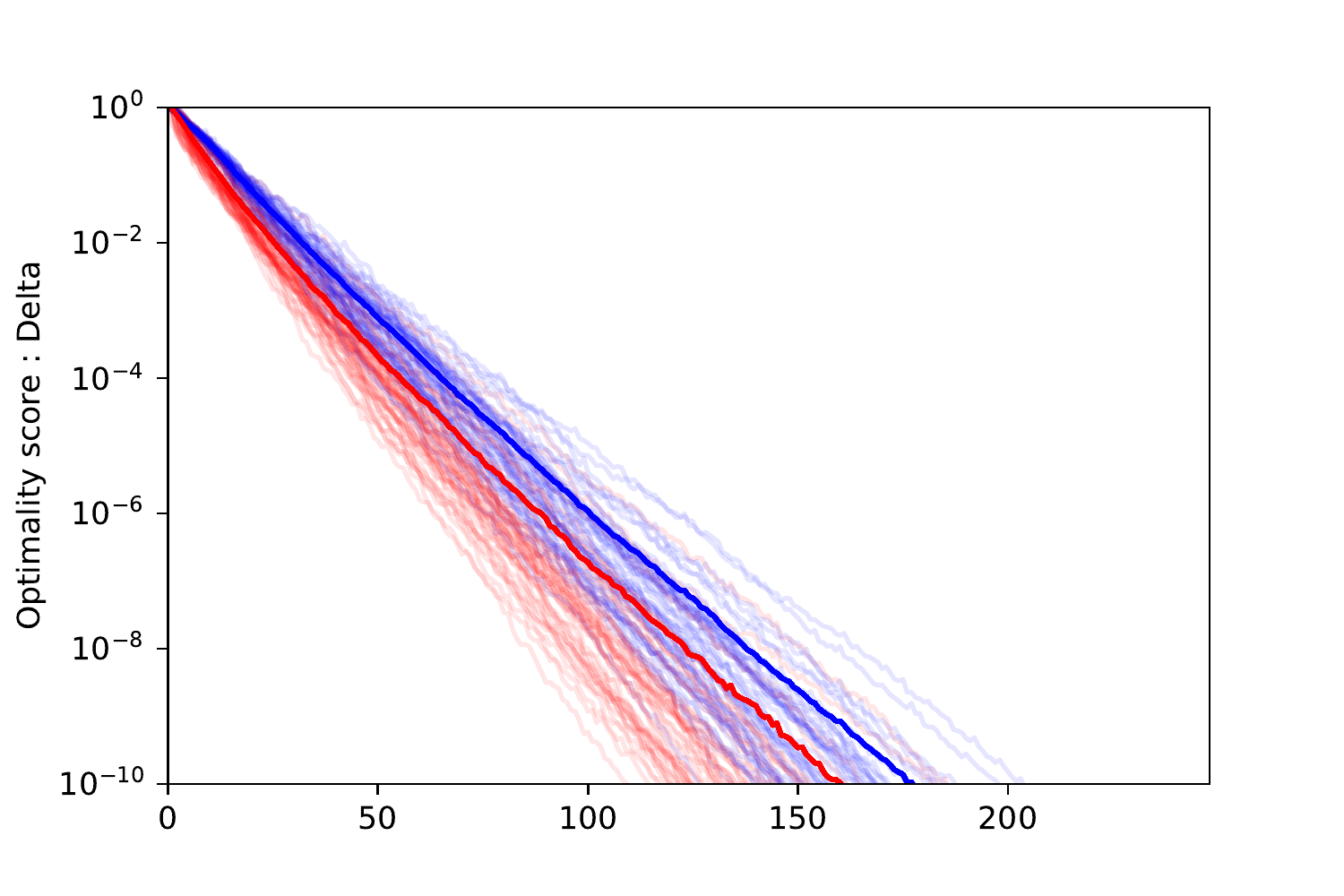} 
      \caption{Delta optimality score without noise} 
      \label{fig7:b} 
      \vspace{4ex}
    \end{subfigure} 
    \begin{subfigure}[b]{0.5\linewidth}
      \centering
      \includegraphics[width=1.0\linewidth]{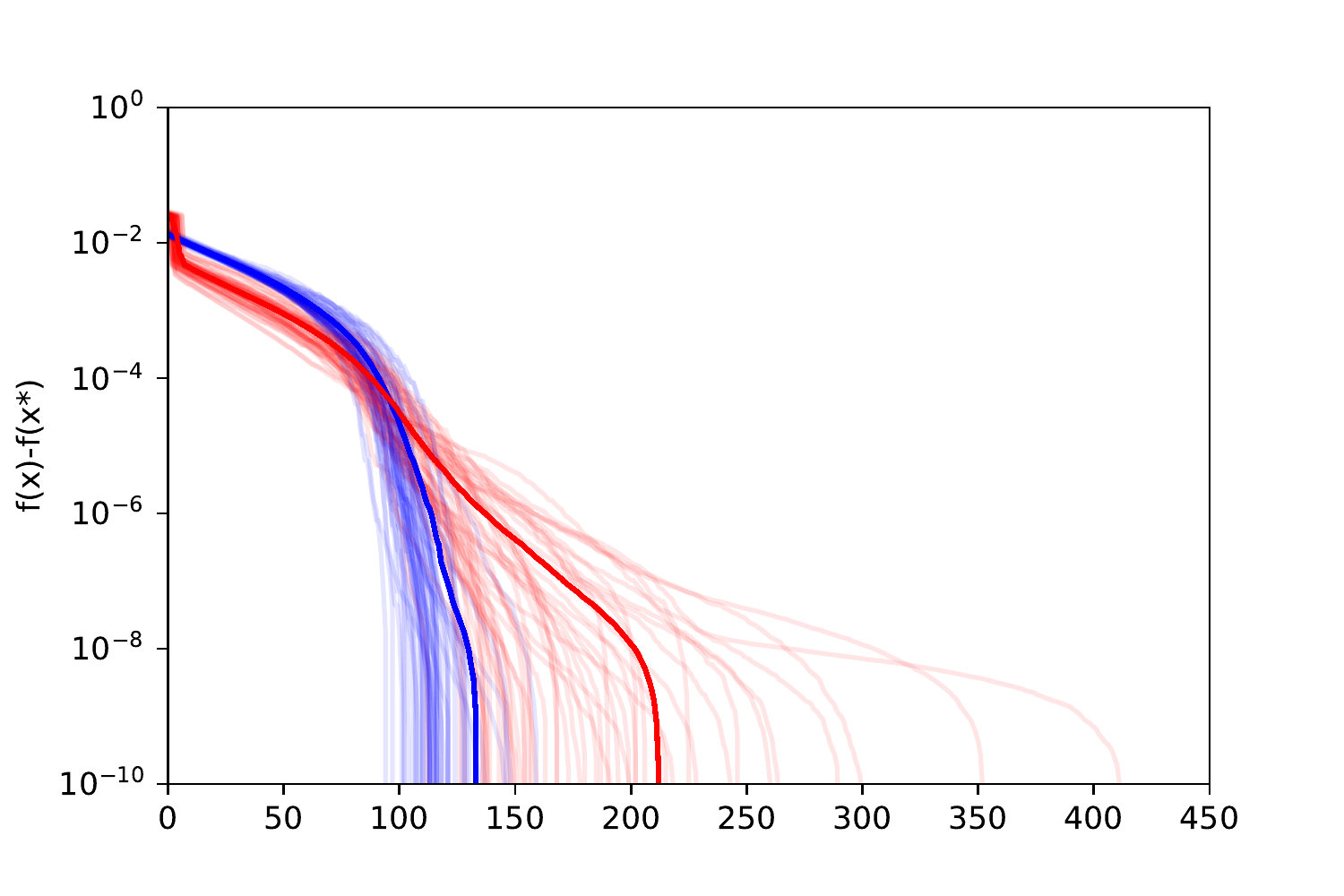} 
      \caption{Dual objective function with noise} 
      \label{fig7:c} 
    \end{subfigure}
    \begin{subfigure}[b]{0.5\linewidth}
      \centering
      \includegraphics[width=1.0\linewidth]{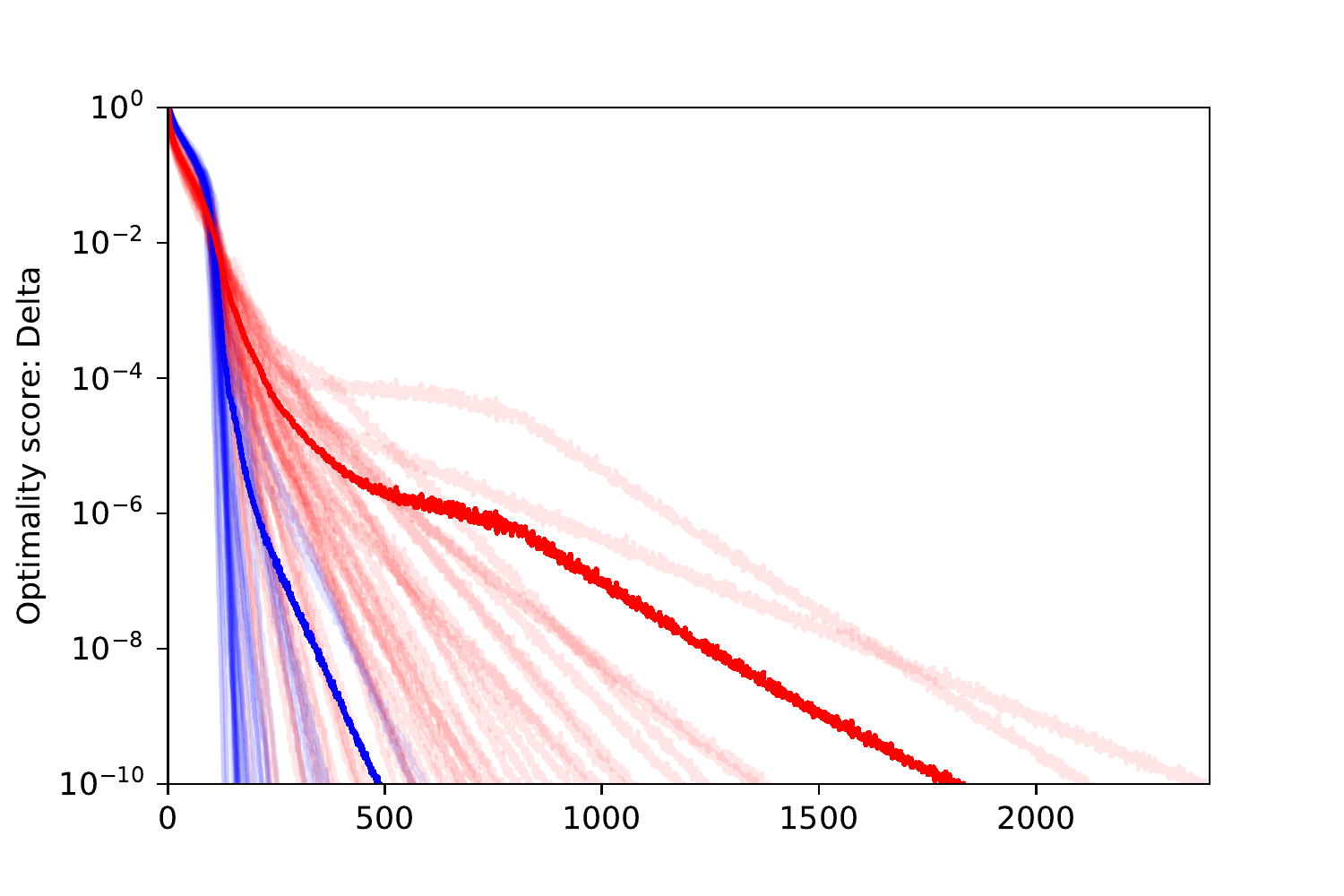} 
      \caption{Optimality score: Delta with noise} 
      \label{fig7:d} 
    \end{subfigure} 
    \caption{Plots of 50 trajectories of the dual objective function value (\cref{fig7:a}, \cref{fig7:c}) and the optimality score (\cref{fig7:b}, \cref{fig7:d}) in function of the number of iterations for the classical SMO algorithm in blue and the proposed generalized SMO in red. Two settings were used, one without noise and another one with additive gaussian noise. }
    \label{fig7} 
  \end{figure}

\section{Conclusion}
In this paper, we studied the optimization problem related to SVR with linear constraints. We showed that for this optimization problem, strong duality holds and that the dual problem is convex. We presented a generalized SMO algorithm that solve the dual problem and we proved its convergence to a solution. This algorithm uses a coordinate descent strategy where a closed form of the updates were defined. The proposed algorithm is easy to implement and shows good performance in practise. We demonstrated the good performance of our proposed estimator on different regression settings. In presence of high level of noise,  our estimator has shown to be robust and has better estimation performance in comparison to Least Squares based estimators or projected SVR estimators. 

This work leaves several open questions for future works. The question of the convergence rate of the algorithm is very natural and will have to be address in the future. Another natural question rises about the possiblity to extend our method on non-linear function estimation with linear constraints. From our point of view, it is a very challenging question because the dual optimization problem of the linearly constrained SVR loses its only dependance on the inner product between the columns of $X$, crossed terms appear in the objective function which makes it difficult to use the kernel trick as it would naturally be used for classical SVR.

\bibliographystyle{siamplain}
\bibliography{bibliography}

\newpage
\appendix
\section{Proof of \cref{dual_prop}}
\label{proof_section}
\begin{proof}
    We prove the part 2 and 3 of \cref{dual_prop} starting by writing the Lagrangian associated to Problem \cref{polyhedral_svr}:
    \begin{equation}
        \label{lagrangian}
        \begin{split}
            L= & \frac{1}{2}||\beta||^{2}+C(\nu\epsilon + \frac{1}{n}\sum_{i=1}^{n}(\xi_{i}+\xi_{i}^{*}))+\sum^{n}_{i=1} \alpha_{i}(-\epsilon-\xi_{i}-y_{i}+\beta^{T}X_{i:}+\beta_{0})\\
            + &\sum^{n}_{i=1} \alpha_{i}^{*}(-\epsilon-\xi_{i}^{*}+y_{i}-\beta^{T}X_{i:}-\beta_{0})-\sum_{i=1}^{n} \lambda_{i}\xi_{i}+\lambda_{i}^{*}\xi_{i}^{*}-\eta\epsilon \\
            + & \gamma^{T}(A\beta-b)-\mu^{T}(\Gamma\beta-d) 
        \end{split}
    \end{equation}  

We will use the notation $x_{i}^{(*)}$ to denote $x_{i}$ or $x_{i}^{*}$. From \cref{lagrangian}, we write the KKT conditions:

\begin{subequations}
    \begin{align}
         \nabla_{\beta}L&= \beta+\sum_{i=1}^{n}(\alpha_{i}-\alpha_{i}^{*})X_{i:} + A^{T}\gamma -\Gamma^{T}\mu=0  \label{eq1.1} \\
         \nabla_{\beta_{0}}L&=\sum^{n}_{i=1}\alpha_{i}-\alpha_{i}^{*} =0 \label{eq1.2} \\
         \nabla_{\xi_{i}^{(*)}}L&=\frac{C}{n}-\alpha_{i}^{(*)}-\lambda_{i}^{(*)} =0 \label{eq1.3} \\
         \nabla_{\epsilon}L&=C\nu-\sum^{n}_{i=1}\alpha_{i}+\alpha_{i}^{*}-\eta =0  \label{eq1.5} \\
         \alpha_{i}^{(*)}&\geq 0 \label{eq2.1} \\
         \eta&\geq 0 \label{eq2.2} \\
         \lambda_{i}^{(*)}&\geq 0 \label{eq2.3} \\
         \gamma_{j}&\geq 0  \label{eq2.4} \\
             \alpha_{i}(-\epsilon-\xi_{i}-y_{i}+\beta^{T}X_{i:}+\beta_{0})&=0 \label{eq3.1}  \\
            \alpha_{i}^{*}(-\epsilon-\xi_{i}^{*}+y_{i}-\beta^{T}X_{i:}-\beta_{0})&=0 \label{eq3.2} \\
         \lambda_{i}^{(*)}\xi_{i}^{(*)}&=0 \label{eq3.3} \\
         \eta\epsilon&=0 \label{eq3.4} \\
         \gamma_{j}(A\beta-b)_{j}&=0. \label{eq3.5}
    \end{align}      
\end{subequations}
From \cref{eq1.3}, we have that: 
\begin{equation}
    \lambda_{i}^{(*)}= \frac{C}{n}-\alpha_{i}^{(*)} \label{I}.
\end{equation}
From \cref{eq2.1} and \cref{eq2.3}, we have that: 
\begin{equation}
    \frac{C}{n}\geq \alpha_{i}^{*}\geq 0 \label{II}.
\end{equation}
From \cref{eq1.5}, we have that:
\begin{equation}
    \eta=C\nu - \sum_{i=1}^{n}(\alpha_{i}+\alpha_{i}^{*}) \label{III}.   
\end{equation}
From \cref{eq1.1},
\begin{equation}
    \beta= -\sum_{i=1}^{n}(\alpha_{i}-\alpha_{i}^{*})X_{i:} - A^{T}\gamma +\Gamma^{T}\mu. \label{IV}
\end{equation}
From \cref{eq1.2},
\begin{equation}
    \sum_{i=1}^{n}(\alpha_{i}-\alpha_{i}^{*})=0. \label{V}
\end{equation} 
From \cref{eq2.2}, 
\begin{equation}
    C\nu \geq \sum_{i=1}^{n}(\alpha_{i}+\alpha_{i}^{*}). \label{VI}
\end{equation}
Using \cref{I}, \cref{III}, \cref{V}, we obtain:
\begin{equation*}
    \begin{split}
        L= &\frac{1}{2}||\beta||^{2}+C\nu\epsilon + \frac{C}{n}\sum_{i=1}^{n}(\xi_{i}+\xi_{i}^{*})-\epsilon\sum^{n}_{i=1} (\alpha_{i}+\alpha_{i}^{*})-\sum^{n}_{i=1} \alpha_{i}\xi_{i}+\alpha_{i}^{*}\xi_{i}^{*}\\
    +& \sum^{n}_{i=1} (\alpha_{i}-\alpha_{i}^{*})(-y_{i}+\beta^{T}X_{i:})-\sum_{i=1}^{n} (\frac{C}{n}-\alpha_{i})\xi_{i}+(\frac{C}{n}-\alpha_{i}^{*})\xi_{i}^{*} \\
    - & (C\nu - \sum_{i=1}^{n}(\alpha_{i}+\alpha_{i}^{*}))\epsilon+\gamma^{T}(A\beta-b)-\mu^{T}(\Gamma\beta-d),
    \end{split}
\end{equation*}

and 
\begin{equation*}
    L=\frac{1}{2}||\beta||^{2}-\sum^{n}_{i=1} (\alpha_{i}-\alpha_{i}^{*})y_{i} +\sum^{n}_{i=1} (\alpha_{i}-\alpha_{i}^{*})\beta^{T}X_{i:}
   + \gamma^{T}(A\beta-b)-\mu^{T}(\Gamma\beta-d).
\end{equation*}
Replacing $\beta$ by the expression obtained in \cref{IV} yields to:

\begin{equation*}
    \begin{split}
    L= & \frac{1}{2}\langle -\sum_{i=1}^{n}(\alpha_{i}-\alpha_{i}^{*})X_{i:} - A^{T}\gamma +\Gamma^{T}\mu, -\sum_{i=1}^{n}(\alpha_{i}-\alpha_{i}^{*})X_{i:} - A^{T}\gamma +\Gamma^{T}\mu \rangle \\
    - &\sum^{n}_{i=1} (\alpha_{i}-\alpha_{i}^{*})y_{i} +\langle -\sum_{i=1}^{n}(\alpha_{i}-\alpha_{i}^{*})X_{i:} - A^{T}\gamma +\Gamma^{T}\mu ,\sum^{n}_{i=1} (\alpha_{i}-\alpha_{i}^{*}) X_{i:}\rangle \\
   + &\gamma^{T}(A(-\sum_{i=1}^{n}(\alpha_{i}-\alpha_{i}^{*})X_{i:} - A^{T}\gamma +\Gamma^{T}\mu)-b)\\
   -& \mu^{T}(\Gamma(-\sum_{i=1}^{n}(\alpha_{i}-\alpha_{i}^{*})X_{i:} - A^{T}\gamma +\Gamma^{T}\mu)-d),
    \end{split}
\end{equation*}

\begin{equation*}
\begin{split}
    L= &\frac{1}{2}\sum_{i=1}^{n}\sum^{n}_{j=1}(\alpha_{i}-\alpha_{i}^{*})(\alpha_{j}-\alpha_{j}^{*})\langle X_{i:}, X_{j:} \rangle + \frac{1}{2} \gamma^{T}AA^{T}\gamma + \frac{1}{2} \mu^{T}\Gamma\Gamma^{T}\mu \\
    + &\sum^{n}_{i=1}(\alpha_{i}-\alpha_{i}^{*})\gamma^{T}AX_{i:}-\sum^{n}_{i=1}(\alpha_{i}-\alpha_{i}^{*})\mu^{T}\Gamma X_{i:}-\gamma^{T}A\Gamma^{T}\mu-\sum^{n}_{i=1} (\alpha_{i}-\alpha_{i}^{*})y_{i} \\
    -&\sum_{i=1}^{n}\sum^{n}_{j=1}(\alpha_{i}-\alpha_{i}^{*})(\alpha_{j}-\alpha_{j}^{*})\langle X_{i:}, X_{j:} \rangle -\sum^{n}_{i=1}(\alpha_{i}-\alpha_{i}^{*})\gamma^{T}AX_{i:} \\
    + & \sum^{n}_{i=1}(\alpha_{i}-\alpha_{i}^{*})\mu^{T}\Gamma X_{i:}-\sum^{n}_{i=1}(\alpha_{i}-\alpha_{i}^{*})\gamma^{T}AX_{i:}-\gamma^{T}AA^{T}\gamma + \gamma^{T}A\Gamma^{T}\mu - \gamma^{T}b \\+ & \sum^{n}_{i=1}(\alpha_{i}-\alpha_{i}^{*})\mu^{T}\Gamma X_{i:}-\mu^{T}\Gamma\Gamma^{T}\mu + \gamma^{T}A\Gamma^{T}\mu + \mu^{T} d, \\
\end{split}
\end{equation*}
and finally 
\begin{equation}
\begin{split}
    L= & -\frac{1}{2}(\alpha-\alpha^{*})^{T}Q(\alpha-\alpha^{*}) - \frac{1}{2} \gamma^{T}AA^{T}\gamma - \frac{1}{2} \mu^{T}\Gamma\Gamma^{T}\mu- \sum^{n}_{i=1}(\alpha_{i}-\alpha_{i}^{*})\gamma^{T}AX_{i:} \\
    + &\sum^{n}_{i=1}(\alpha_{i}-\alpha_{i}^{*})\mu^{T}\Gamma X_{i:}+\gamma^{T}A\Gamma^{T}\mu-\sum^{n}_{i=1} (\alpha_{i}-\alpha_{i}^{*})y_{i}-\gamma^{T}b+\mu^{T} d.
\end{split}
\label{last_lagrangian}
\end{equation}
Using the constraints derived from \cref{eq2.4}, \cref{V}, \cref{VI}, \cref{II} and the expression of the Lagragian \cref{last_lagrangian}, the dual problem is as follows: 

\begin{eqnarray}
    \begin{aligned}
    & \underset{\alpha,\alpha^{*},\gamma,\mu}{\min}
    & \frac{1}{2}((\alpha-\alpha^{*})^{T}Q(\alpha-\alpha^{*})+\gamma^{T}AA^{T}\gamma +\mu^{T}\Gamma\Gamma^{T}\mu+2\sum^{n}_{i=1}(\alpha_{i}-\alpha_{i}^{*})\gamma^{T}AX_{i:}\\
    & &-2\sum^{n}_{i=1}(\alpha_{i}-\alpha_{i}^{*})\mu^{T}\Gamma X_{i:}-2\gamma^{T}A\Gamma^{T}\mu)+\sum^{n}_{i=1} (\alpha_{i}-\alpha_{i}^{*})y_{i}+\gamma^{T}b-\mu^{T}d\\
    & \text{subject to}
    & 0 \leq \alpha_{i}^{(*)}\leq \frac{C}{n} \\
    & & \sum_{i=1}^{n}\alpha_{i}+\alpha_{i}^{*}\leq C\nu \\
    & & \sum_{i=1}^{n}\alpha_{i}-\alpha_{i}^{*}=0 \\
    & & \gamma_{j}\geq 0. \\
    \end{aligned}
    \label{dual_polyhedral_svr_proof}
\end{eqnarray}

The equation linking the primal and the dual optimization problems is given by \cref{IV} which finishes the proof. 
\end{proof}

\section{Proof of \cref{prop_structure}}
\label{proof_structure}
\begin{proof}
    To prove part 1., let's recall that $\alpha_{i}$, $\alpha_{i}^{*}$ are the lagrange multipliers associated to the optimization problem \cref{polyhedral_svr} constraints: 
        \begin{equation}
           \begin{aligned}
               \beta^{T}X_{i:}+\beta_{0}-y_{i}\leq \epsilon+\xi_{i} \\
               y_{i}-\beta^{T}X_{i:}-\beta_{0}\leq \epsilon+\xi_{i}^{*}. \\
           \end{aligned} 
        \end{equation}
        The complementary optimality conditions leads to 
        \begin{equation*}
            \begin{aligned}
                \alpha_{i}(\beta^{T}X_{i:}+\beta_{0}-y_{i}-\epsilon-\xi_{i})=0 \\
                \alpha_{i}^{*}(y_{i}-\beta^{T}X_{i:}-\beta_{0}-\epsilon-\xi_{i}^{*})=0. \\
            \end{aligned} 
         \end{equation*}
        Let's now suppose that $\alpha_{i}>0$ and $\alpha_{i}^{*}>0$ which implies that  
         \begin{equation*}
            \begin{aligned}
            \beta^{T}X_{i:}+\beta_{0}-y_{i}-\epsilon-\xi_{i}=0 \\
            y_{i}-\beta^{T}X_{i:}-\beta_{0}-\epsilon-\xi_{i}^{*}=0.
            \end{aligned}
         \end{equation*}
         It follows that $-2\epsilon=\xi_{i}+\xi_{i}^{*}$ and $\xi_{i},\xi_{i}^{*}\geq 0$ which implies $\xi_{i}=\xi_{i}^{*}=\epsilon=0$. This goes against our condition $\epsilon>0$. \\

         To prove part 2., we need to remind the optimality conditions given in \cref{proof_section}, \cref{eq3.4} and \cref{III} leads to
         \begin{equation*}
             (C\nu-\sum_{i=1}^{l}\alpha_{i}+\alpha_{i}^{*})\epsilon=0.
         \end{equation*}

         Thus, if $\epsilon>0$ we have that $\sum_{i=1}^{n} \alpha_{i}+\alpha_{i}^{*}=C\nu$. 
\end{proof}

\section{Proof of \cref{prop_non_violating}}
\label{proof_non_violating}

We start by giving a lemma that will be usefull to prove the proposition for the blocks $\alpha$ and $\alpha^{*}$. 
\begin{lemma}
    \label{lemma_gradient}
    If the update between iteration $k$ and $k+1$ happens in the block $\alpha$ (or $\alpha^{*}$) and that $(i,j)$ is the most violating pair of variables then 
    $$
    \nabla_{\alpha_{i}}f(\theta^{k+1})-\nabla_{\alpha_{j}}f(\theta^{k+1}) = \nabla_{\alpha_{i}}f(\theta^{k})-\nabla_{\alpha_{j}}f(\theta^{k}) + t^{*}
    (Q_{ii}+Q_{jj}-2Q_{ij})
$$
\end{lemma}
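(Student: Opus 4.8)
\textbf{Plan for proving \cref{lemma_gradient}.}
The plan is to compute directly how the gradient of the quadratic objective $f(\theta) = \theta^{T}\bar{Q}\theta + l^{T}\theta$ changes when we pass from $\theta^{k}$ to $\theta^{k+1}$. Recall from the discussion preceding \cref{def_updates} that when the block $\alpha$ is selected with most violating pair $(i,j)$, the update is $\theta^{k+1} = \theta^{k} + u$, where $u$ is supported on the two coordinates corresponding to $\alpha_{i}$ and $\alpha_{j}$, with $u_{i} = t^{*}$ and $u_{j} = -t^{*}$. Since $f$ is quadratic with Hessian $2\bar{Q}$, we have the exact identity $\nabla f(\theta^{k+1}) = \nabla f(\theta^{k}) + 2\bar{Q}u$ (no higher-order terms). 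Hence for any coordinate $m$, $\nabla_{m} f(\theta^{k+1}) - \nabla_{m} f(\theta^{k}) = 2(\bar{Q}u)_{m} = 2 t^{*}(\bar{Q}_{mi} - \bar{Q}_{mj})$.

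First I would apply this with $m$ equal to the coordinate of $\alpha_{i}$ and then to the coordinate of $\alpha_{j}$, and subtract:
\begin{equation*}
\bigl(\nabla_{\alpha_{i}} f(\theta^{k+1}) - \nabla_{\alpha_{j}} f(\theta^{k+1})\bigr) - \bigl(\nabla_{\alpha_{i}} f(\theta^{k}) - \nabla_{\alpha_{j}} f(\theta^{k})\bigr) = 2 t^{*}\bigl(\bar{Q}_{ii} - \bar{Q}_{ij} - \bar{Q}_{ji} + \bar{Q}_{jj}\bigr).
\end{equation*}
Then I would use the block structure of $\bar{Q}$ spelled out after \cref{dual_prop}: the top-left block of $\bar{Q}$ (the $\alpha$-$\alpha$ block) is exactly $Q$, so $\bar{Q}_{ii} = Q_{ii}$, $\bar{Q}_{jj} = Q_{jj}$ and $\bar{Q}_{ij} = \bar{Q}_{ji} = Q_{ij}$ (here one also uses that $Q = XX^{T}$ is symmetric). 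Substituting gives the claimed formula. Note there is a factor-of-two bookkeeping subtlety: depending on whether one writes $f(\theta) = \tfrac12 \theta^T \bar Q \theta + l^T\theta$ or $f(\theta) = \theta^T \bar Q\theta + l^T \theta$, the constant is $Q_{ii}+Q_{jj}-2Q_{ij}$ as stated; I would just follow the normalization used in the derivation of $t^{*}$ in \cref{def_updates}, where the unconstrained minimizer was $t_{q} = -(\nabla_{\alpha_{i}}f - \nabla_{\alpha_{j}}f)/(\bar{Q}_{ii}+\bar{Q}_{jj}-2\bar{Q}_{ij})$, so that the coefficient in the lemma is consistent with that display.

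The argument for the $\alpha^{*}$ block is identical: by the block structure, the $\alpha^{*}$-$\alpha^{*}$ block of $\bar{Q}$ is again $Q$, so replacing $\alpha_{i}, \alpha_{j}$ by $\alpha_{i}^{*}, \alpha_{j}^{*}$ throughout and repeating the computation yields the same conclusion. There is no real obstacle here — the only thing to be careful about is the indexing into the stacked vector $\theta$ and the corresponding block of $\bar{Q}$, together with the normalization constant, so the main ``work'' is just making sure the off-diagonal cross terms from the other blocks ($\gamma$, $\mu$) do not contribute, which is immediate because $u$ vanishes on those coordinates and we only read off the $\alpha_{i}, \alpha_{j}$ entries of $\nabla f$.
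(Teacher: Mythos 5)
Your proposal is correct and follows essentially the same route as the paper: the paper's proof likewise expands $\nabla_{\alpha_{i}}f(\theta^{k+1})-\nabla_{\alpha_{j}}f(\theta^{k+1})$ using the affine gradient of the quadratic objective and the fact that $\theta^{k+1}-\theta^{k}$ is supported on the two coordinates $i,j$ with values $\pm t^{*}$, reading off the relevant entries of the quadratic-form matrix. Your resolution of the factor-of-two issue is the right one — all of the paper's computations (the derivation of $t_{q}$ in particular) use the $\tfrac{1}{2}\theta^{T}\bar{Q}\theta+l^{T}\theta$ normalization, so the Hessian is $\bar{Q}$ and the coefficient is $Q_{ii}+Q_{jj}-2Q_{ij}$ exactly as in the lemma.
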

\begin{proof}
    Let's recall that the update in the block $\alpha$ (or $\alpha^{*}$) has the following form 
    \begin{equation*}
        \begin{aligned}
            \alpha_{i}^{k+1} = \alpha_{i}^{k}+ t^{*} \\
            \alpha_{j}^{k+1} = \alpha_{j}^{k}- t^{*},
        \end{aligned}
    \end{equation*}
    with $t^{*}$ as defined in \cref{def_updates}. In a stacked form we have that
    \begin{equation*}
        \begin{split}
            \nabla_{\alpha_{i}}f(\theta^{k+1})-\nabla_{\alpha_{j}}f(\theta^{k+1})& = (Q\theta^{k+1})_{i}+l_{i}-(Q\theta^{k+1})_{j}-l_{j} \\
            & = \sum_{s=1}^{2n+k_{1}+k_{2}} Q_{is}\theta^{k+1}_{s} + l_{i} -\sum_{s=1}^{2n+k_{1}+k_{2}} Q_{js}\theta^{k+1}_{s}- l_{j} \\
            & = \nabla_{\alpha_{i}}f(\theta^{k})-\nabla_{\alpha_{j}}f(\theta^{k}) + t^{*}(Q_{ii}-Q_{ij})+t^{*}(Q_{jj}-Q_{ij}) \\
            & = \nabla_{\alpha_{i}}f(\theta^{k})-\nabla_{\alpha_{j}}f(\theta^{k}) + t^{*}(Q_{ii}+Q_{jj}-2Q_{ij}). 
        \end{split}
    \end{equation*}
    \hspace{0.5em}
\end{proof}

This lemma is helpful for the proof the blocks $\gamma$ and $\mu$.
\begin{lemma}
    \label{gradient_update}
    If $\theta_{i}$ is the updated variable at iteration $k$, then the following holds:
    \begin{equation*}
        \nabla_{i}f(\theta^{k+1})=\bar{Q}_{ii}(\theta_{i}^{k+1}-\theta_{i}^{k})+\nabla_{i}f(\theta^{k})
    \end{equation*}
\end{lemma}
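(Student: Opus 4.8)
The plan is to read the identity off the affine structure of the gradient of the dual objective. Recall that the objective can be written in stacked form with Gramian (hence symmetric) matrix $\bar{Q}$, so that $\nabla f(\theta) = \bar{Q}\theta + l$ and, for any increment $u \in \mathbb{R}^{2n+k_1+k_2}$, $\nabla f(\theta + u) = \nabla f(\theta) + \bar{Q}u$. This is exactly the same linearity of the gradient that was already used in the proof of \cref{lemma_gradient}.

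The key structural observation is that when the block $\gamma$ or the block $\mu$ is selected, the generalized SMO step of \cref{euclid} modifies a single coordinate: writing $\delta = \theta_i^{k+1} - \theta_i^k$, we have $\theta^{k+1} = \theta^k + \delta\, e_i$, with $e_i$ the canonical vector. First I would substitute $u = \delta\, e_i$ into the linearity relation above, so that $\nabla_i f(\theta^{k+1}) = \nabla_i f(\theta^k) + (\bar{Q}\,\delta e_i)_i = \nabla_i f(\theta^k) + \bar{Q}_{ii}\,\delta$. Replacing $\delta$ by $\theta_i^{k+1} - \theta_i^k$ gives precisely the claimed identity
\[
\nabla_i f(\theta^{k+1}) = \bar{Q}_{ii}\,(\theta_i^{k+1}-\theta_i^k) + \nabla_i f(\theta^k).
\]
Alternatively, one can expand $\nabla_i f(\theta) = \sum_{s=1}^{2n+k_1+k_2}\bar{Q}_{is}\theta_s + l_i$ directly, split off the $s=i$ term, use $\theta_s^{k+1}=\theta_s^k$ for $s\neq i$, and subtract the corresponding expression for $\theta^k$; all off-diagonal terms cancel and only $\bar{Q}_{ii}(\theta_i^{k+1}-\theta_i^k)$ survives.

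There is essentially no obstacle here: the statement is a one-line consequence of the quadratic form of $f$. The only points requiring a word of care are that $\bar{Q}$ is symmetric (true since it is a Gramian matrix, as noted after \cref{prop_structure}), so the gradient is $\bar{Q}\theta + l$ rather than $\tfrac12(\bar{Q}+\bar{Q}^T)\theta+l$, and that exactly one coordinate is updated, which is precisely the situation in which the block $\gamma$ or $\mu$ is chosen in \cref{euclid}; the lemma will then feed into the proof of \cref{prop_non_violating} for those two blocks.
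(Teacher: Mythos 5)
Your proposal is correct and follows essentially the same route as the paper: the paper's proof is exactly your ``alternative'' computation, expanding $\nabla_{i}f(\theta^{k+1})=(\bar{Q}\theta^{k+1})_{i}+l_{i}$, splitting off the $s=i$ term, and using that only coordinate $i$ changes, which is the same one-line consequence of the affine gradient $\nabla f(\theta)=\bar{Q}\theta+l$ that you invoke.
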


\begin{proof}
    The proof is straightforward,
    \begin{equation*}
        \begin{split}
            \nabla_{i}f(\theta^{k+1}) = & (\bar{Q}\theta_{i}^{k+1})_{i}+l_{i} \\
            = & \sum_{s=1}^{2n+k_{1}+k_{2}} \bar{Q}_{is}\theta_{s}^{k+1}+l_{i} \\
            = & \sum_{s\neq i}^{2n+k_{1}+k_{2}} \bar{Q}_{is}\theta_{s}^{k+1}+l_{i}+\bar{Q}_{ii}\theta_{i}^{k+1} \\
            = & \nabla_{i}f(\theta_{k}) +\bar{Q}_{ii}\theta_{i}^{k+1} -\bar{Q}_{ii}\theta_{i}^{k} \\
            = & \bar{Q}_{ii}(\theta_{i}^{k+1}-\theta_{i}^{k})+\nabla_{i}f(\theta^{k}).
        \end{split}
    \end{equation*}
    \hspace{1em}
\end{proof}

Let's now give the proof of \cref{prop_non_violating}.
\begin{proof}
    Let's consider that the update between iteration $k$ and $k+1$ takes place in the block $\alpha$. We will define $(i,j)$ as the most violating pair of variables as defined in \cref{SMO}. From the discussion in \cref{updates}, we know that minimizing the objective function of \cref{dual_polyhedral_svr} considering that only the parameter $t$ is a variable leads to minimizing the following function:
    \begin{equation}
        \psi (t)=\frac{1}{2}t^{2}(\bar{Q}_{ii}+\bar{Q}_{jj}-2\bar{Q}_{ij})+t(\nabla_{\alpha_{i}}f(\theta^{k})-\nabla_{\alpha_{j}}f(\theta^{k}))+K
        \label{psi_t}
    \end{equation}
    We recall that $t$ is the parameter that will be used for the update of $\alpha_{i}$ and $\alpha_{j}$ and $K$ is a constant term. We also have the following result from \cref{lemma_gradient}:
    \begin{equation}
        \nabla_{\alpha_{i}}f(\theta^{k+1})-\nabla_{\alpha_{j}}f(\theta^{k+1})=\nabla_{\alpha_{i}}f(\theta^{k})-\nabla_{\alpha_{j}}f(\theta^{k})+(\bar{Q}_{ii}+\bar{Q}_{jj}-2\bar{Q}_{ij})t^{*}
        \label{eq_gradient}
    \end{equation}

    The minimization update takes place in the square $S=[0, \frac{C}{n}]\times [0,\frac{C}{n}]$ illustrated in \cref{square}. 
    
    \begin{figure}
        \centering
        \includegraphics[scale=0.3 ]{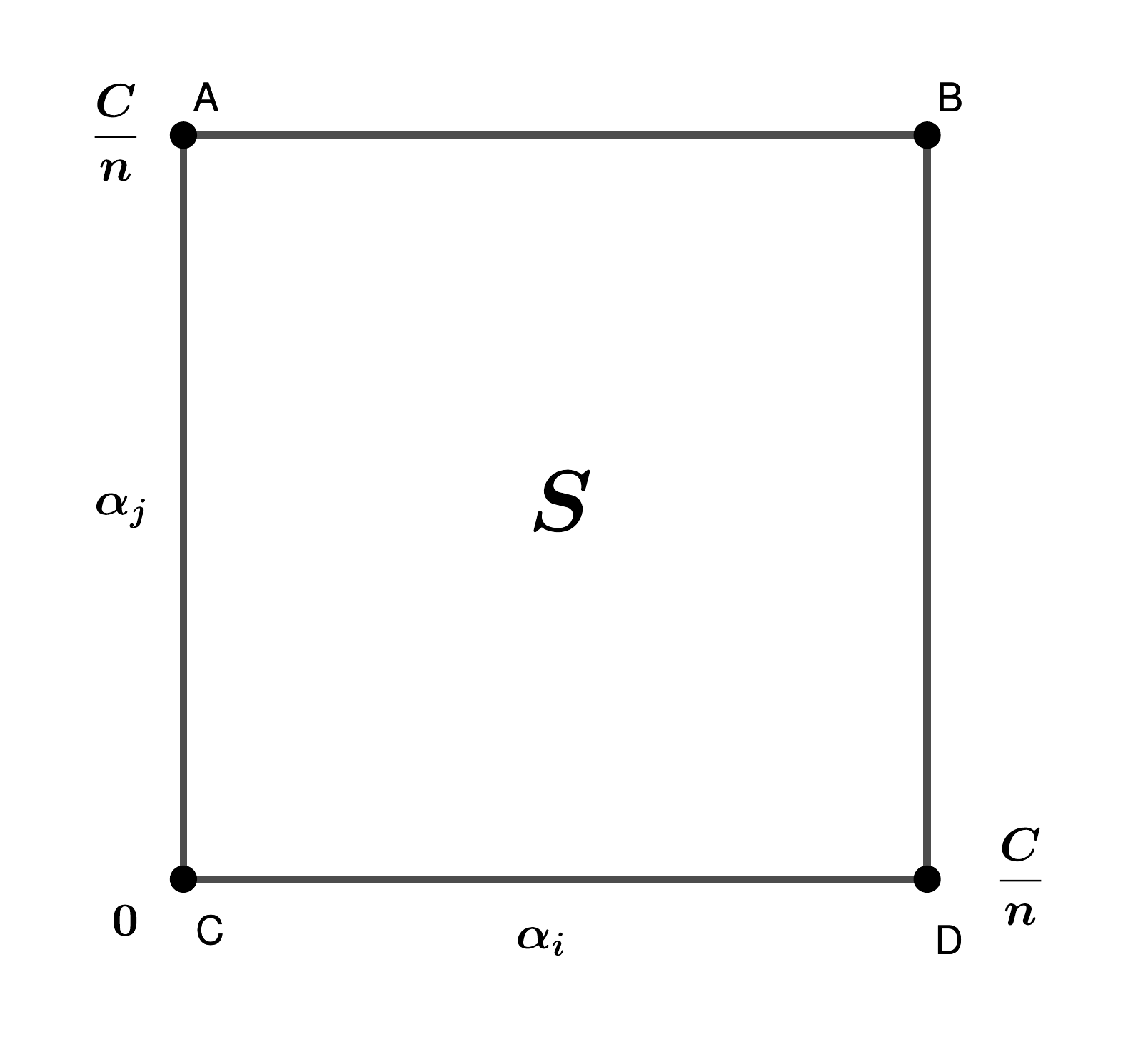}
        \caption{Possible update for the block $\alpha$ or $\alpha^{*}$}
        \label{square}
    \end{figure}
At points B and C of the square S, $(i,j)$ cannot be a $\tau$-violating pair of variables because they belong to the same set of indices $I_{\text{up}}$ (or $I_{\text{low}}$). Everywhere else, violation can take place. 

\begin{itemize}
    \item On $]CA]$, $\alpha_{i}=0$ and $\alpha_{j}>0$ so $i\in I_{\text{up}}$ and $j\in I_{\text{low}}$ which means that by definition of $\tau$-violating pair of variable 
    $$
    \nabla_{\alpha_{i}}f(\theta^{k})-\nabla_{\alpha_{j}}f(\theta^{k})<-\tau<0 
    $$
    which means $t_{q} =\frac{-(\nabla_{\alpha_{i}} f(\theta^{k})-\nabla_{\alpha_{j}} f(\theta^{k}))}{(\bar{Q}_{ii}+\bar{Q}_{jj}-2\bar{Q}_{ij})}>0$. Let's remind that : 
    \begin{equation}
        \max(-\alpha_{i},\alpha_{j}-\frac{C}{n})\leq t^{*} \leq \min(\frac{C}{n}-\alpha_{i},\alpha_{j})
        \label{t_inequality}
    \end{equation}

    It means that on $]CA]$, \cref{t_inequality} becomes : $0\leq t^{*} \leq \alpha_{j}$
    There are then two possibilities:
        \begin{itemize}
            \item if $t_{q}\geq \alpha_{j}$, it implies because of the constraints on $t^{*}$, that $t^{*}=\alpha_{j}$. The update becomes then $\alpha_{i}^{k+1}=\alpha^{k}_{i}+\alpha^{k}_{j}$ and $\alpha^{k+1}_{j}=0$. Then $j$ belongs to the set of indices $I_{\text{up}}$ and $i$ belongs to $I_{\text{low}}$.  
            From \cref{eq_gradient}, we deduce that $\nabla_{\alpha_{i}}f(\theta^{k+1})-\nabla_{\alpha_{j}}f(\theta^{k+1})\leq 0$ which proves that $(i,j)$ is not a violating pair of variable anymore and that $\alpha^{k+1}\neq \alpha^{k}$
            \item Second possibility is that $t_{q}\leq \alpha_{j}$ then $t^{*}=t_{q}$, then $(\alpha^{k+1}_{i},\alpha^{k+1}_{j})$ belongs to $\text{int}(S)$. From \eqref{eq_gradient}, we deduce that $\nabla_{\alpha_{i}}f(\theta^{k+1})-\nabla_{\alpha_{j}}f(\theta^{k+1})=0 $, $(i,j)$ is not a $\tau$-violating pair of variables anymore and $\alpha^{k+1}\neq \alpha^{k}$.
        \end{itemize}

        \item On $]CD]$, $\alpha_{i}>0$ and $\alpha_{j}=0$ so $i\in I_{\text{low}}$ and $j\in I_{\text{up}}$ which means that by definition of $\tau$-violating pair of variable 
    $$
    \nabla_{\alpha_{i}}f(\theta^{k})-\nabla_{\alpha_{j}}f(\theta^{k})>\tau 
    $$
    which yields to $t_{q}<0$. 
    It means that on $]CD]$, \cref{t_inequality} becomes : $-\alpha_{i}\leq t^{*} \leq 0$
    There are then two possibilities:
        \begin{itemize}
            \item $t_{q}\leq -\alpha_{i}$, it implies because of the constraints on $t^{*}$, that $t^{*}=-\alpha_{i}$. The update becomes then $\alpha^{k+1}_{i}=0$ and $\alpha^{k+1}_{j}=\alpha^{k}_{i}$. Then $j$ belongs to the set of indices $I_{\text{low}}$ and $i$ belongs to $I_{\text{up}}$ at $\alpha^{k+1}$.  
            From \cref{eq_gradient}, we deduce that $\nabla_{\alpha_{i}}f(\theta^{k+1})-\nabla_{\alpha_{j}}f(\theta^{k+1})\geq 0$ which proves that $(i,j)$ is not a violating pair of variable anymore and that $\alpha^{k+1}\neq \alpha^{k}$
            \item Second possibility is that $t_{q}\geq -\alpha_{i}$ then $t^{*}=t_{q}$. Implying that $(\alpha^{k+1}_{i},\alpha^{k+1}_{j})$ belongs to $\text{int}(S)$. From \cref{eq_gradient}, we deduce that $\nabla_{\alpha_{i}}f(\theta^{k+1})-\nabla_{\alpha_{j}}f(\theta^{k+1})=0 $, $(i,j)$ is not a $\tau$-violating pair of variables anymore and $\alpha^{k+1}\neq \alpha^{k}$.
        \end{itemize}

\item On $[AB[$, $0\leq \alpha_{i}<\frac{C}{n}$ and $\alpha_{j}=\frac{C}{n}$ so $i\in I_{\text{up}}$ and $j\in I_{\text{low}}$ which means that by definition of $\tau$-violating pair of variable 
    $$
    \nabla_{\alpha_{i}}f(\theta^{k})-\nabla_{\alpha_{j}}f(\theta^{k})<-\tau<0, 
    $$
    which implies $t_{q}>0$. 

    It means that on $[AB[$, \cref{t_inequality} becomes $0\leq t^{*} \leq \frac{C}{l}-\alpha_{i}.$
    There are then two possibilities:
        \begin{itemize}
            \item if $t_{q}\geq \frac{C}{l}-\alpha_{i}$, it implies, because of the constraints on $t^{*}$, that $t^{*}=\frac{C}{n}-\alpha_{i}$. The update is $\alpha^{k+1}_{i}=\frac{C}{n}$ and $\alpha^{k+1}_{j}=\alpha_{i}^{k}$. Then $j$ belongs to the set of indices $I_{\text{up}}$ and $i$ belongs to $I_{\text{low}}$.  
            From \cref{eq_gradient} we deduce that $ \nabla_{\alpha_{i}}f(\theta^{k+1})-\nabla_{\alpha_{j}}f(\theta^{k+1})\leq 0$ which proves that $(i,j)$ is not a violating pair of variable anymore and that $\alpha^{k+1}\neq \alpha^{k}$.
            \item Second possibility is that $t_{q}\leq \frac{C}{n}-\alpha_{i}$. Thus $t^{*}=t_{q}$, then $(\alpha^{k+1}_{i},\alpha^{k+1}_{j})$ belongs to $\text{int}(S)$. From \cref{eq_gradient}, we deduce that $\nabla_{\alpha_{i}}f(\theta^{k+1})-\nabla_{\alpha_{j}}f(\theta^{k+1})=0 $, $(i,j)$ is not a $\tau$-violating pair of variables anymore and $\alpha^{k+1}\neq \alpha^{k}$.
        \end{itemize}

        \item On $]BD]$, $\alpha_{i}=\frac{C}{n}$ and $0\leq\alpha_{j}<\frac{C}{l}$. Thus we have that $i\in I_{\text{low}}$ and $j\in I_{\text{up}}$ which means that by definition of $\tau$-violating pair of variable 
        $$
        \nabla_{\alpha_{i}}f(\theta^{k})-\nabla_{\alpha_{j}}f(\theta^{k})>\tau, 
        $$
        which yields to $t_{q}<0$. 
        It means that on $]BD]$, \cref{t_inequality} becomes $\alpha_{j}-\frac{C}{n}\leq t^{*} \leq 0.$
        There are then two possibilities:
            \begin{itemize}
                \item if $t_{q}\leq \alpha_{j}-\frac{C}{n}$, it implies that $t^{*}=\alpha_{j}-\frac{C}{n}$. The update becomes $\alpha^{k+1}_{i}=\alpha^{k}_{j}$ and $\alpha^{k+1}_{j}=\frac{C}{n}$. Then $j$ belongs to the set of indices $I_{\text{low}}$ and $i$ belongs to $I_{\text{up}}$ at $\alpha^{k+1}$.  
                From \cref{eq_gradient}, we deduce that $\nabla_{\alpha_{i}}f(\theta^{k+1})-\nabla_{\alpha_{j}}f(\theta^{k+1})\geq 0$ which proves that $(i,j)$ is not a violating pair of variable anymore and that $\alpha^{k+1}\neq \alpha^{k}$.
                \item Second possibility is that $t_{q}\geq \alpha_{j}-\frac{C}{n}$. Then $t^{*}=t_{q}$ and $(\alpha^{k+1}_{i},\alpha^{k+1}_{j})\in int(S)$. From \cref{eq_gradient}, we deduce that $\nabla_{\alpha_{i}}f(\theta^{k+1})-\nabla_{\alpha_{j}}f(\theta^{k+1})=0 $, $(i,j)$ is not a $\tau$-violating pair of variables anymore and $\alpha^{k+1}\neq \alpha^{k}$.
            \end{itemize} 
        \item Inside the square $S$, if $i\in I_{\text{low}}$ and $j\in I_{\text{up}}$, we have that $t_{q}<0$. Then there will be three possibilities for the update coming from this inequality $\max(-\alpha_{i},\alpha_{j}-\frac{C}{n})\leq t^{*} < 0$. The same discussion as the one we had for the edges of S gives the desired results, the only difference here is that there are three different possibilities: 2 clipped updates possibilities and the update using $t_{q}$. 
        The same observation is true for the case where $i\in I_{\text{up}}$ and $j\in I_{\text{low}}$, it will only change the sign of $t_{q}$. Thus changing the 2 possible clipped update using the upper bound of \cref{t_inequality} or the update using $t_{q}$. Everything leads to the conclusion that $(i,j)$ cannot be a violating pair of variables at iteration $k+1$ and that $\alpha^{k+1}\neq \alpha$.     
\end{itemize}

The same arguments are used to prove the same for the block $\alpha^{*}$, the proof is similar. 

    Let's now prove that when the update takes place at index $i$ in the block $\gamma$ then $i$ is not violating variable at iteration $k+1$. Then we need to show that $\nabla_{\gamma_{i}}f(\theta^{k+1})\geq 0$. Let's start with the case where the update $\gamma_{i}^{k+1}=\frac{\nabla_{\gamma_{i}}f(\theta^{k})}{\bar{Q}_{ii}}-\gamma^{k}_{i}$. Using \cref{gradient_update}, we have that $\nabla_{\gamma_{i}}f(\theta^{k+1})=0$. 
    The second possible case is $\gamma_{i}^{k+1}=0$ because $-\frac{\nabla_{\gamma_{i}}f(\theta^{k})}{\bar{Q}_{ii}}+\gamma^{k}_{i}\leq 0$. If $\gamma_{i}^{k+1}=0$ then $\nabla_{\gamma_{i}}f(\theta^{k+1})=-\bar{Q}_{ii}\gamma_{i}^{k}+\nabla_{\gamma_{i}}f(\theta^{k})$. $\bar{Q}_{ii}$ is positive because it is a diagonal element of a Gram matrix $(A^{T}A)$ thus we get that $\nabla_{\gamma_{i}}f(\theta^{k+1})\geq 0$, which proves that $i$ is not a violating variable anymore.

    The proof for the block $\mu$ relies on the same idea except that it is simpler because there is no clipped updates possible so $\nabla_{\mu_{i}}f(\theta^{k+1})=0$ if the updates takes place at $\mu_{i}$ which also proves that $i$ is not a violating variable for this block of variables anymore. 
\end{proof}

\section{Proof of \cref{convergence_theorem}}
\label{proof_theorem}

We begin the proof of the theorem by giving several preliminary results that will be hepful for giving the final proof. 
The first result gives a bound for controlling the distance of the primal iterates generated by the algorithm and the solution of \cref{polyhedral_svr}.
\begin{lemma}
    \label{lemma_bound}
    For any SMO-LSSVR iterate $\beta^{k}=-\sum_{i=1}^{n} (\alpha_{i}^{k}-(\alpha_{i}^{*})^{k})X_{i:}-A^{T}\gamma^{k}+\Gamma^{T}\mu^{k}$, $\beta^{\text{opt}}$ a solution of \cref{polyhedral_svr} and $\theta^{\text{opt}}$ a solution of \cref{dual_polyhedral_svr}, it holds that 
    \begin{equation*}
        \frac{1}{2}||\beta^{k}-\beta^{\text{opt}}||\leq f(\theta^{k})-f(\theta^{\text{opt}}).
    \end{equation*}
\end{lemma}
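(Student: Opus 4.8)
The plan is to exploit the link between the dual objective value and the primal objective value together with the fact that the primal problem has a strongly convex objective in the variable $\beta$. First I would observe that the dual objective $f(\theta) = \tfrac12 \theta^T \bar Q \theta + l^T \theta$ is, by construction of $\bar Q$ as a Gramian, exactly $-g(\theta)$ where $g$ is the Lagrangian dual function that was maximized in \cref{proof_section}; more precisely, evaluating the Lagrangian \cref{last_lagrangian} at the primal point $\beta(\theta)=-\sum_i(\alpha_i-\alpha_i^*)X_{i:}-A^T\gamma+\Gamma^T\mu$ recovers $-f(\theta)$ up to the constant terms that were dropped, and crucially $\tfrac12\|\beta(\theta)\|^2$ appears inside. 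The key algebraic identity to establish is
\begin{equation*}
    f(\theta^{\mathrm{opt}}) - f(\theta^{k}) = -\tfrac12\|\beta^{k}\|^2 + \tfrac12\|\beta^{\mathrm{opt}}\|^2 + \langle \text{linear terms} \rangle,
\end{equation*}
and then to massage the linear terms using feasibility of $\theta^k$ and optimality of $\theta^{\mathrm{opt}}$.

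**Main steps.**

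First I would write $P(\beta,\dots)$ for the primal objective of \cref{polyhedral_svr} and use strong duality (\cref{dual_prop}, part 1) so that $f(\theta^{\mathrm{opt}}) = -P^{\mathrm{opt}}$ where $P^{\mathrm{opt}}=\tfrac12\|\beta^{\mathrm{opt}}\|^2 + C(\nu\epsilon^{\mathrm{opt}}+\dots)$. Second, for the iterate $\theta^k$, which lies in the dual feasible set $\mathcal F$, I would construct an associated primal-feasible point: take $\beta^k$ as defined, and choose $\beta_0^k,\epsilon^k,\xi^k,(\xi^*)^k$ so that the slacks are tight, i.e. set the slack variables to the smallest values making the primal constraints hold with $\beta^k$. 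Then the weak-duality inequality gives $f(\theta^k) \le -(\text{value of } L \text{ at this primal-dual pair})$, but since $\theta^k$ need not be dual-optimal I instead want the reverse-flavored bound. The cleaner route: use that $-f$ is the dual function value, so $-f(\theta^k) \le P^{\mathrm{opt}} = -f(\theta^{\mathrm{opt}})$, giving $f(\theta^k)\ge f(\theta^{\mathrm{opt}})$ (the right sign for the claim to be nontrivial), and then I would compute the \emph{gap} $f(\theta^k)-f(\theta^{\mathrm{opt}})$ explicitly. Expanding both using the Gramian structure, the quadratic part contributes $\tfrac12\|\beta^k\|^2 - \tfrac12\|\beta^{\mathrm{opt}}\|^2$ plus cross terms, and I would recognize that the cross terms combine with the linear part $l^T(\theta^k-\theta^{\mathrm{opt}})$ into $-\langle \beta^{\mathrm{opt}}, \beta^k - \beta^{\mathrm{opt}}\rangle$ after invoking the first-order optimality (variational inequality) for $\theta^{\mathrm{opt}}$ and the equality constraints $\mathbf e^T(\alpha-\alpha^*)=0$ which kill the $\beta_0$-type terms. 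That yields
\begin{equation*}
    f(\theta^k) - f(\theta^{\mathrm{opt}}) \ge \tfrac12\|\beta^k\|^2 - \tfrac12\|\beta^{\mathrm{opt}}\|^2 - \langle\beta^{\mathrm{opt}},\beta^k-\beta^{\mathrm{opt}}\rangle = \tfrac12\|\beta^k-\beta^{\mathrm{opt}}\|^2,
\end{equation*}
which is exactly the claim (noting the statement's left side should read $\tfrac12\|\beta^k-\beta^{\mathrm{opt}}\|^2$).

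**Main obstacle.**

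The delicate part is handling the linear/cross terms so that everything collapses to the square $\tfrac12\|\beta^k-\beta^{\mathrm{opt}}\|^2$ without leaving stray terms of uncontrolled sign. Concretely, I expect to need: (i) the variational inequality $\langle \nabla f(\theta^{\mathrm{opt}}), \theta^k - \theta^{\mathrm{opt}}\rangle \ge 0$ for all $\theta^k\in\mathcal F$, since $\theta^{\mathrm{opt}}$ minimizes the convex $f$ over the convex feasible set; (ii) the identity $\nabla f(\theta^{\mathrm{opt}})$ expressed in block form in terms of $\beta^{\mathrm{opt}}$ (the $\alpha$-blocks give $\pm(X\beta^{\mathrm{opt}} + y)$-type expressions, the $\gamma$-block gives $A\beta^{\mathrm{opt}}+b$, etc.), so that the variational inequality translates into a statement purely about $\beta^k$ and $\beta^{\mathrm{opt}}$; and (iii) careful bookkeeping that the constant $K$-type terms and the $b,d$ terms cancel correctly. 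Writing $v = (X^T, -X^T, ?, ?)$-style, the cleanest presentation is to set $M = \begin{bmatrix} X \\ -X \\ A \\ -\Gamma\end{bmatrix}$ so $\bar Q = MM^T$ and $\beta(\theta) = -M^T\theta_{\text{appropriately signed}}$; then $f(\theta) = \tfrac12\|M^T\theta\|^2 + l^T\theta = \tfrac12\|\beta(\theta)\|^2 + l^T\theta$, and the whole computation reduces to a one-line expansion of $\tfrac12\|\beta^k\|^2 - \tfrac12\|\beta^{\mathrm{opt}}\|^2 + l^T(\theta^k-\theta^{\mathrm{opt}})$ combined with the variational inequality. I would present it in that compact matrix form to keep the cross-term algebra manageable.
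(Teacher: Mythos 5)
Your final route is correct, but it is genuinely different from the paper's proof. The paper proceeds primal-side: it expands $\tfrac12\|\beta^{k}-\beta^{\text{opt}}\|^{2}=\tfrac12\|\beta^{k}\|^{2}-\langle\beta^{k},\beta^{\text{opt}}\rangle+\tfrac12\|\beta^{\text{opt}}\|^{2}$, lower-bounds the inner product $\langle\beta^{k},\beta^{\text{opt}}\rangle$ term by term using primal feasibility of $\beta^{\text{opt}}$ (the $\epsilon$-tube constraints, $A\beta\leq b$, $\Gamma\beta=d$) together with dual feasibility of $\theta^{k}$ ($\alpha,\alpha^{*},\gamma\geq 0$, $\mathbf{e}^{T}(\alpha-\alpha^{*})=0$, $\alpha_{i}^{(*)}\leq C/n$), and then recognizes the primal objective at the optimum and invokes strong duality to replace it by $-f(\theta^{\text{opt}})$. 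You instead stay entirely on the dual side: writing $\bar Q=MM^{T}$ with $\beta(\theta)=-M^{T}\theta$, the exact second-order expansion of the quadratic $f$ at $\theta^{\text{opt}}$ gives $f(\theta^{k})-f(\theta^{\text{opt}})=\langle\nabla f(\theta^{\text{opt}}),\theta^{k}-\theta^{\text{opt}}\rangle+\tfrac12\|M^{T}(\theta^{k}-\theta^{\text{opt}})\|^{2}$, and the variational inequality for the minimizer of the convex $f$ over the convex set $\mathcal F$ kills the linear term, leaving exactly $\tfrac12\|\beta^{k}-\beta^{\text{opt}}\|^{2}$. Your argument is shorter and avoids all the feasibility bookkeeping (no $C/n$ bounds, no handling of the slack and $\epsilon$ terms); what it requires instead is the identification $\beta^{\text{opt}}=-M^{T}\theta^{\text{opt}}$ for the \emph{given} pair of solutions, which deserves one explicit sentence: since the primal objective is strongly convex in $\beta$, the optimal $\beta$ is unique, so the $\beta$ recovered from any dual solution via part 3 of \cref{dual_prop} coincides with $\beta^{\text{opt}}$ (the paper uses the same fact implicitly when it writes $f(\theta^{\text{opt}})=\tfrac12\|\beta^{\text{opt}}\|^{2}+l^{T}\theta^{\text{opt}}$). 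Two smaller remarks: the middle of your write-up (constructing a primal point with tight slacks, weak duality, and the claim that the equality constraint $\mathbf{e}^{T}(\alpha-\alpha^{*})=0$ is needed to kill $\beta_{0}$-type terms) is an unnecessary detour --- in the matrix form the variational inequality disposes of all linear terms at once and $\beta_{0}$ never enters the dual objective; and you correctly note that the left-hand side of the statement should be the squared norm $\tfrac12\|\beta^{k}-\beta^{\text{opt}}\|^{2}$, which is indeed what the paper's own proof establishes.
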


\begin{proof}

    A first observation is that the relationship between the primal optimization problem and the dual leads to this equality 
    \begin{equation}
        f(\theta^{k})=\frac{1}{2}||\beta^{k}||^{2}+l^{T}\theta^{k}.
        \label{equality_dual_primal}
    \end{equation}

    Replacing $\beta^{k}$ by $-\sum_{i=1}^{n} (\alpha_{i}^{k}-(\alpha_{i}^{*})^{k})X_{i:}-A^{T}\gamma^{k}+\Gamma^{T}\mu^{k}$ leads to \cref{equality_dual_primal}.
    We have already seen that there is strong duality between both problems so the dual gap is zero at the solutions. Thus it means that for any primal optimal solution $(\beta^{\text{opt}},\beta_{0}^{\text{opt}},\xi^{\text{opt}},\xi^{\text{opt}},\epsilon^{\text{opt}})$ and any dual solution $\theta^{\text{opt}}$, it holds true that 
    $$
        \frac{1}{2} ||\beta^{\text{opt}}||^{2}+C(\nu\epsilon^{\text{opt}}+\frac{1}{n}\sum_{i=1}^{n}\xi_{i}^{\text{opt}}+\xi_{i}^{\text{opt}})=-f(\theta^{\text{opt}})=-\frac{1}{2}||\beta^{\text{opt}}||^{2}-l^{T}\theta^{\text{opt}}.
    $$

    Using the equation link between primal and dual yields to
    \begin{align*} 
        \langle \beta, \beta^{\text{opt}}\rangle &= \langle -\sum_{i=1}^{n} (\alpha_{i}-\alpha_{i}^{*})X_{i:}-A^{T}\gamma+\Gamma^{T}\mu, \beta^{\text{opt}}\rangle  \\ 
        &= -\langle A^{T}\gamma,\beta^{\text{opt}} \rangle -\sum_{i=1}^{n} (\alpha_{i}-\alpha_{i}^{*}) \langle X_{i:}, \beta^{\text{opt}}\rangle + \langle \Gamma^{T}\mu,\beta^{\text{opt}}\rangle.
        \end{align*}
        Since $\sum_{i=1}^{n} (\alpha_{i}-\alpha_{i}^{*})=0$, we have that
    \begin{align*}
        \langle \beta, \beta^{\text{opt}}\rangle &= -\langle A^{T}\gamma,\beta^{\text{opt}} \rangle -\sum_{i=1}^{n} (\alpha_{i}-\alpha_{i}^{*}) \langle X_{i:}, \beta^{\text{opt}}\rangle + \langle \Gamma^{T}\mu,\beta^{\text{opt}}\rangle  -\beta_{0}^{\text{opt}}\sum_{i=1}^{n} (\alpha_{i}-\alpha_{i}^{*}) \\
        &= -\langle A^{T}\gamma,\beta^{\text{opt}} \rangle -\sum_{i=1}^{n} \alpha_{i}(\langle X_{i:}, \beta^{\text{opt}}\rangle + \beta_{0}^{\text{opt}})+\sum_{i=1}^{n} \alpha_{i}^{*}(\langle X_{i:}, \beta^{\text{opt}}\rangle + \beta_{0}^{\text{opt}})\\ 
        & +\langle \Gamma^{T}\mu,\beta^{\text{opt}}\rangle.
    \end{align*}
    Moreover, using the constraints of \cref{polyhedral_svr} and the fact that $\alpha\geq 0$ and $\alpha^{*}\geq 0$ it holds that:
    \begin{align*}
        \langle \beta, \beta^{\text{opt}}\rangle & \geq -\langle A^{T}\gamma,\beta^{\text{opt}} \rangle + \sum_{i=1}^{n} \alpha_{i} (-y_{i}-\epsilon^{\text{opt}}-\xi_{i}^{\text{opt}})+\sum_{i=1}^{n} \alpha_{i}^{*} (y_{i}-\epsilon^{\text{opt}}-(\xi_{i}^{*})^{\text{opt}})\\ & +\langle \Gamma^{T}\mu,\beta^{\text{opt}}\rangle  \\
        &= -\langle A^{T}\gamma,\beta^{\text{opt}} \rangle - \sum_{i=1}^{n} (\alpha_{i}-\alpha_{i}^{*})y_{i} -\epsilon^{\text{opt}}C\nu- \sum_{i=1}^{n}\alpha_{i}\xi_{i}^{\text{opt}}+\alpha_{i}^{*}(\xi_{i}^{*})^{\text{opt}}\\
        & +\langle \Gamma^{T}\mu,\beta^{\text{opt}}\rangle 
    \end{align*}

    Finally we have
    \begin{align*}
        \frac{1}{2}||\beta^{k}-\beta^{\text{opt}}||^{2} &= \frac{1}{2}|| \beta^{k}||^{2}-\langle \beta^{k}, \beta^{\text{opt}}\rangle + \frac{1}{2} ||\beta^{\text{opt}}||^{2}\\
        & \leq \frac{1}{2}|| \beta^{k}||^{2} +\langle A^{T}\gamma^{k},\beta^{\text{opt}} \rangle + \sum_{i=1}^{n} (\alpha_{i}^{k}-(\alpha_{i}^{*})^{k})y_{i} +\epsilon^{\text{opt}}C\nu \\
        & + \sum_{i=1}^{n}\alpha_{i}^{k}\xi_{i}^{\text{opt}}+(\alpha_{i}^{*})^{k}(\xi_{i}^{*})^{\text{opt}}-\langle \Gamma^{T}\mu^{k},\beta^{\text{opt}}\rangle +\frac{1}{2} ||\beta^{\text{opt}}||^{2} 
    \end{align*}

    Since $\beta^{\text{opt}}$ statisfies the constraints of the primal optimization problem, it holds that $
     \langle \Gamma^{T}\mu,\beta^{\text{opt}}\rangle=\mu^{T}d
    $
    and since $\gamma\geq 0$ we have $\langle A^{T}\gamma,\beta^{\text{opt}}\rangle \leq \gamma^{T}b$, thus
    \begin{align*}
        \frac{1}{2}||\beta^{k}-\beta^{\text{opt}}||^{2} & \leq \frac{1}{2}|| \beta^{k}||^{2} +\gamma^{T}b + \sum_{i=1}^{n} (\alpha_{i}^{k}-(\alpha_{i}^{*})^{k})y_{i} +\epsilon^{\text{opt}}C\nu \\
        & + \sum_{i=1}^{n}\alpha_{i}^{k}\xi_{i}^{\text{opt}}+(\alpha_{i}^{*})^{k}(\xi_{i}^{*})^{\text{opt}}-\mu^{T}d +\frac{1}{2} ||\beta^{\text{opt}}||^{2}. 
    \end{align*}

    The linear term that we wrote $l$ in the objective function of \cref{dual_polyhedral_svr} defines $l^{T}\theta = \sum_{i=1}^{n}(\alpha_{i}-\alpha_{i}^{*})X_{i:}+\gamma^{T}b-\mu^{T}d$ which in combination with the equality \cref{equality_dual_primal} gives
    \begin{align*}
        \frac{1}{2}||\beta^{k}-\beta^{\text{opt}}||^{2} & \leq \frac{1}{2} f(\theta^{k})  +\epsilon^{\text{opt}}C\nu 
     + \sum_{i=1}^{n}\alpha_{i}^{k}\xi_{i}^{\text{opt}}+(\alpha_{i}^{*})^{k}(\xi_{i}^{*})^{\text{opt}} +\frac{1}{2} ||\beta^{\text{opt}}||^{2}.
    \end{align*}

    Each $\alpha_{i}^{k}$, $(\alpha_{i}^{*})^{k}$ is bounded by $\frac{C}{n}$ which yields to 
    \begin{align*}
        \frac{1}{2}||\beta^{k}-\beta^{\text{opt}}||^{2} &\leq f(\theta^{k}) +\epsilon^{\text{opt}}C\nu + \frac{C}{n}\sum_{i=1}^{n}\xi_{i}^{\text{opt}}
        +(\xi_{i}^{*})^{\text{opt}}
        +\frac{1}{2} ||\beta^{\text{opt}}||^{2}.
    \end{align*}

    We recognize the objective function of the primal optimization problem and using that there is no dual gap at the optimum it follows that 
    $$\epsilon^{\text{opt}}C\nu + \frac{C}{n}\sum_{i=1}^{n}\xi_{i}^{\text{opt}}
    +(\xi_{i}^{*})^{\text{opt}}
    +\frac{1}{2} ||\beta^{\text{opt}}||^{2} = -f(\theta^{\text{opt}}),$$
    which finishes the proof.
    \hspace{1em}
\end{proof}

 Before the next statement, we need to give a definition that we will use in the next proofs. 

\begin{definition}
    Let $(i,j)$ ($i\in I_{\text{low}}$ and $j\in I_{\text{up}}$) be the most violating pair of variables in the block $\alpha$, $(i^{*},j^{*})$ ($i^{*}\in I^{*}_{\text{low}}$ and $j^{*}\in I^{*}_{\text{up}}$) for the block $\alpha^{*}$. Let $s_{1}$ be the index of the most violating variable in the block $\gamma$ and $s_{2}$ in the block $\mu$. We will call "optimality score" at iteration $k$ the quantity $\Delta^{k} = \max (\Delta_{1}^{k}, \Delta_{2}^{k}, \Delta_{3}^{k}, \Delta_{4}^{k}),  $ where $\Delta_{1}^{k} = \max (\nabla_{\alpha_{j}} f(\theta^{k}) - \nabla_{\alpha_{i}} f(\theta^{k}), 0)$, $\Delta_{2}^{k} = \max(\nabla_{\alpha_{j^{*}}} f(\theta^{k}) - \nabla_{\alpha_{i^{*}}} f(\theta^{k}),0)$, $\Delta_{3}^{k} = \max (- \nabla_{\gamma_{s_{1}}} f(\theta^{k}), 0)$ and $\Delta_{4}^{k} = \max (|\nabla_{\mu_{s_{2}}} f(\theta^{k})|, 0)$.
\end{definition}
The next result states that the sequence $\{f(\theta^{k})\}$ is a decreasing sequence. This result already states the convergence to a certain value $\bar{f}$ because we know that the sequence is bounded by the existing global minimum of the function since $f$ is convex.
\begin{lemma}
    \label{lemma_decrease}
    The sequence generated by the Generalized SMO algorithm $\{ f(\theta^{k})\}$ is a decreasing sequence. This sequence converges to a value $\bar{f}$. 
\end{lemma}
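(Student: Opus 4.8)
The plan is to split the claim into two independent parts — that $\{f(\theta^{k})\}$ is non-increasing, and that it is bounded below — after which the existence of the limit $\bar f$ follows immediately from the monotone convergence theorem; the authors' own remark that $f$ is convex and hence has a global minimum is exactly the lower bound we will make precise.

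For monotonicity I would argue that, whatever block is selected at step $k$ (namely the one achieving $\Delta=\max(\Delta_{1},\Delta_{2},\Delta_{3},\Delta_{4})$), the next iterate $\theta^{k+1}$ is \emph{by construction} an exact minimizer of $f$ over the feasible set obtained by freezing every coordinate except the one (blocks $\gamma$, $\mu$) or two (blocks $\alpha$, $\alpha^{*}$) that are updated. In the notation of \cref{updates}, for the blocks $\alpha,\alpha^{*}$ the rule of \cref{def_updates} returns the minimizer over $t\in[I_{1},I_{2}]$ of the one-variable convex quadratic $\psi(t)=\tfrac12 t^{2}(\bar{Q}_{ii}+\bar{Q}_{jj}-2\bar{Q}_{ij})+t\bigl(\nabla_{\alpha_{i}}f(\theta^{k})-\nabla_{\alpha_{j}}f(\theta^{k})\bigr)+K$, and for the blocks $\gamma,\mu$ it returns the minimizer of $\theta_{i}\mapsto\tfrac12\bar{Q}_{ii}\theta_{i}^{2}+\bigl(\nabla_{i}f(\theta^{k})-\bar{Q}_{ii}\theta_{i}^{k}\bigr)\theta_{i}+K$ over $[0,+\infty)$, respectively over $\mathbb{R}$. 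The key observation is that in each of these restricted subproblems the current iterate is itself a feasible competitor: it is the point $t=0$ in the $\alpha,\alpha^{*}$ cases, and $0\in[I_{1},I_{2}]$ precisely because $\theta^{k}\in\mathcal{F}$; and it is the point $\theta_{i}=\theta_{i}^{k}\ge 0$ in the $\gamma$ case, feasible for the same reason. Hence the optimal value of the restricted problem is at most its value at the current point, which equals $f(\theta^{k})$, so $f(\theta^{k+1})\le f(\theta^{k})$. (This can be sharpened to a strict inequality whenever $\Delta^{k}>\tau$: the selected variable/pair being $\tau$-violating forces the linear coefficient of the restricted quadratic to be nonzero, and the case analysis already carried out in the proof of \cref{prop_non_violating} shows the clipped optimizer $t^{*}$ lies strictly on the descent side of $0$; this strict version is not required for the present lemma but is used in the convergence proof.)

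For the lower bound I would recall that every iterate stays in the feasible set $\mathcal{F}$ of \cref{dual_polyhedral_svr}: the $\pm t$ form of the $\alpha,\alpha^{*}$ updates preserves the two equality constraints, the clipping preserves the box constraints and the sign constraint $\gamma\ge 0$, and $\theta^{0}\in\mathcal{F}$ by initialization. Since $\bar{Q}$ is a Gramian matrix the objective $f$ is convex, and since strong duality holds for \cref{polyhedral_svr} (\cref{dual_prop}, part~1) the optimal value of the quadratic program \cref{dual_polyhedral_svr} is finite; being a quadratic function bounded below over a nonempty polyhedron it is in fact attained, at some $\theta^{\mathrm{opt}}$ (Frank--Wolfe theorem). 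Therefore $f(\theta^{k})\ge f(\theta^{\mathrm{opt}})$ for all $k$, so $\{f(\theta^{k})\}$ is non-increasing and bounded below, hence converges to some $\bar f\ge f(\theta^{\mathrm{opt}})$.

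The only genuinely delicate point I anticipate is the first step: one must verify that the closed-form rules of \cref{def_updates} really do coincide with the exact minimizer of the restricted subproblem — which is precisely the content of the derivation preceding \cref{prop_non_violating} in \cref{updates} — and that the current iterate always qualifies as a feasible competitor there, so that "passing to the minimizer" can only lower $f$. The boundedness-below step is then routine once strong duality and the Frank--Wolfe attainment result are invoked.
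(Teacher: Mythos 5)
Your proof is correct, but it takes a genuinely different route from the paper's for the monotonicity part. The paper computes $f(\theta^{k})-f(\theta^{k+1})$ explicitly and runs a case analysis on whether the update is clipped, obtaining in each case a quantitative lower bound on the decrease (for instance $\frac{(\Delta_{1}^{k})^{2}}{2\|X_{i:}-X_{j:}\|^{2}}$ when no clipping occurs, $\frac{1}{2}\Delta_{1}^{k}\alpha_{i}$ or $\frac{1}{2}\Delta_{1}^{k}(\frac{C}{n}-\alpha_{j})$ in the clipped cases, and the analogue \cref{diff_obj} for the $\gamma$, $\mu$ blocks). You instead observe that each update of \cref{def_updates} is the exact minimizer of the restricted convex one-variable quadratic over the restricted feasible set, and that the current iterate ($t=0$, resp.\ $\theta_{i}=\theta_{i}^{k}$) is a feasible competitor, so the objective cannot increase; this is cleaner, avoids the case analysis entirely, and your bounded-below argument (strong duality plus Frank--Wolfe attainment) is in fact more careful than the paper's one-line appeal to convexity. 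What your argument does not buy is precisely the paper's byproduct: the explicit decrease bounds in terms of $\Delta_{1}^{k}$, $\nabla_{\gamma_{i}}f$, $\nabla_{\mu_{i}}f$ are reused later (in \cref{delta_convergence} to show $\Delta^{k_{j}}\rightarrow 0$, and in \cref{lemma_bounded} via \cref{diff_obj}), so your qualitative descent argument proves \cref{lemma_decrease} as stated but would still have to be supplemented by those computations for the rest of the convergence proof. Your parenthetical remark on strict decrease (via \cref{prop_non_violating}) is consistent with how the paper handles that point.
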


\begin{proof}
    We first prove that $f(\theta^{k})-f(\theta^{k+1})\geq 0$ when minimization takes place in the block $\alpha$. Let $(i,j)$ be the indices of the variables selected to be optimized and let $u\in\mathbb{R}^{2n+k_{1}+k_{2}}$ be the vector with only zeros except at the $i^{th}$ coordinate where it is equal to $t^{*}$ as defined in \cref{def_updates} and at the $j^{th}$ coordinate where it is equal to $-t^{*}$. We will also define $t_{q}=\frac{-(\nabla_{\alpha_{i}}f(\theta^{k})-\nabla_{\alpha_{j}}f(\theta^{k}))}{Q_{ii}+Q_{jj}-2Q_{ij}}$, the unconstrained minimum for the update in $\alpha$ block. Let us compute
    \begin{align*} 
        f(\theta^{k})-f(\theta^{k+1}) & = \frac{1}{2}(\theta^{k})^{T}\bar{Q}\theta^{k}+l^{T}\theta^{k}-\frac{1}{2}(\theta^{k+1})^{T}\bar{Q}\theta^{k+1}+l^{T}\theta^{k+1} \\
        & =\frac{1}{2}(\theta^{k})^{T}\bar{Q}\theta^{k}+l^{T}\theta^{k}-\frac{1}{2}(\theta^{k}+U)^{T}\bar{Q}(\theta^{k}+u)+l^{T}(\theta^{k}+u) \\
        & = -\frac{1}{2}u^{T}\bar{Q}u-u^{T}(Q\theta^{k}+l) \\
        &=-\frac{1}{2}u^{T}\bar{Q}u-u^{T}(\nabla f(\theta^{k})) \\
        &= -\frac{(t^{*})^{2}}{2}(Q_{ii}+Q_{jj}-2Q_{ij})-t^{*}(\nabla_{\alpha_{i}}f(\theta^{k})-\nabla_{\alpha_{j}}f(\theta^{k})).
    \end{align*}

    We first study the case when there is no clipping which means that $t^{*}=t_{q}$

    \textbf{1. No clipping}. Replacing $t^{*}$ by its expression leads to the following result: 
    \begin{align*}
        f(\theta^{k})-f(\theta^{k+1}) &= \frac{(\Delta^{k}_{1})^{2}}{2(Q_{ii}+Q_{jj}-2Q_{ij})} \\
        & = \frac{(\Delta^{k}_{1})^{2}}{2||X_{i:}-X_{j:}||^{2}}\geq 0.
    \end{align*}

    \textbf{2. Clipping takes place because }$t_{q}\leq t^{*}=\max(-\alpha_{i},\alpha_{j}-\frac{C}{n})$\\
    We notice that $t_{q}\leq \max(-\alpha_{i},\alpha_{j}-\frac{C}{n})\leq 0$ which implies that $i\in I_{\text{low}}$ and $j\in I_{\text{up}}$. In that case $\Delta_{1}^{k} = \nabla_{\alpha_{i}} f(\theta^{k})-\nabla_{\alpha_{j}} f(\theta^{k})$. Replacing $t_{q}$ by its expression leads to 
    \begin{align*}
        -(\nabla_{\alpha_{i}}f(\theta^{k})-\nabla_{\alpha_{j}}f(\theta^{k})) & \leq t^{*}(Q_{ii}+Q_{jj}-2Q_{ij}) \\
        \frac{\Delta^{k}_{1}t^{*}}{2} & \leq \frac{-(t^{*})^{2}}{2}(Q_{ii}+Q_{jj}-2Q_{ij}) \\
        \frac{\Delta^{k}_{1}t^{*}}{2} -t^{*}\Delta^{k}_{1} & \leq \frac{-(t^{*})^{2}}{2}(Q_{ii}+Q_{jj}-2Q_{ij})-t^{*}(\nabla_{\alpha_{i}}f(\theta^{k})-\nabla_{\alpha_{j}}f(\theta^{k})) \\
        -\frac{1}{2}\Delta^{k}_{1}t^{*} & \leq \frac{-(t^{*})^{2}}{2}(Q_{ii}+Q_{jj}-2Q_{ij})-t^{*}(\nabla_{\alpha_{i}}f(\theta^{k})-\nabla_{\alpha_{j}}f(\theta^{k}))
    \end{align*}
    Thus we have that if $t^{*}=-\alpha_{i}$,
    $$
    f(\theta^{k})-f(\theta^{k+1})\geq \frac{1}{2}\Delta^{k}_{1}\alpha_{i}\geq 0
    $$
    and that if $t^{*}=\alpha_{j}-\frac{C}{n}$,
    $$
    f(\theta^{k})-f(\theta^{k+1})\geq \frac{1}{2}\Delta^{k}_{1}(\frac{C}{n}-\alpha_{j})\geq 0.
    $$
    \textbf{3. Clipping takes place because } $t_{q}\geq t^{*}=\min(\frac{C}{n}-\alpha_{i},\alpha_{j})$.\\
    This time $t_{q}\geq \min(\frac{C}{n}-\alpha_{i},\alpha_{j})\geq 0$ which also implies that $i\in I_{\text{up}}$ and $j\in I_{\text{low}}$ and that $\Delta_{1}^{k} = \nabla_{\alpha_{j}} f(\theta^{k})-\nabla_{\alpha_{i}} f(\theta^{k})$. The only difference here is that multiplying by $-t^{*}$ will imply a change in the inequality.

    \begin{align*}
        -(\nabla_{i}f(\theta^{k})-\nabla_{j}f(\theta^{k})) & \geq t^{*}(Q_{ii}+Q_{jj}-2Q_{ij}) \\
        \frac{-\Delta_{1}^{k}t^{*}}{2} & \leq \frac{-(t^{*})^{2}}{2}(Q_{ii}+Q_{jj}-2Q_{ij}) \\
        \frac{-\Delta_{1}^{k}t^{*}}{2} +t^{*}\Delta_{1}^{k} & \leq \frac{-(t^{*})^{2}}{2}(Q_{ii}+Q_{jj}-2Q_{ij})-t^{*}(\nabla_{i}f(\theta^{k})-\nabla_{j}f(\theta^{k})) \\
        \frac{1}{2}\Delta_{1}^{k}t^{*} & \leq \frac{-(t^{*})^{2}}{2}(Q_{ii}+Q_{jj}-2Q_{ij})-t^{*}(\nabla_{i}f(\theta^{k})-\nabla_{j}f(\theta^{k}))
    \end{align*}

    Thus we have that if $t^{*}= \frac{C}{n}-\alpha_{i}$
    $$
    f(\theta^{k})-f(\theta^{k+1})\geq \frac{1}{2}\Delta^{k}_{1}(\frac{C}{n}-\alpha_{i})\geq 0, 
    $$
    and if $t^{*}= \alpha_{j}$,
    $$
    f(\theta^{k})-f(\theta^{k+1})\geq \frac{1}{2}\Delta^{k}_{1}\alpha_{j}\geq 0. 
    $$
    To prove that $f(\theta^{k})-f(\theta^{k+1})\geq 0$ when the update takes place in the block $\gamma$ and $\mu$ we first need to observe that 
    when only one variable is updated between iteration $k$ and $k+1$ it follows that
    $$
    f(\theta^{k})-f(\theta^{k+1})=\frac{1}{2}\bar{Q}_{ii}(\theta^{k}_{i}-\theta_{i}^{k+1})^{2}.
    $$

    Therefore, we now prove the result for the block $\gamma$. If the update is not a clipped update and $i$ is the index of the updated variable, it holds that
    $$\gamma^{k}_{i}-\gamma_{i}^{k+1}=\frac{\nabla_{\gamma_{i}}f(\theta^{k})}{(AA^{T})_{ii}},$$ which gives the following bound
    \begin{equation}
        \label{diff_obj}
        f(\theta^{k})-f(\theta^{k+1})=\frac{1}{2(AA^{T})_{ii}}(\nabla_{\gamma_{i}}f(\theta^{k}))^{2}\geq 0.
    \end{equation}
    Moreover, if a clipped update takes place in this block, we know that it happens when $0\leq \gamma^{k}_{i}\leq \frac{\nabla_{\gamma_{i}}f(\theta^{k})}{(AA^{T})_{ii}}$. It yields to the following bound
    $$
    f(\theta^{k})-f(\theta^{k+1})=\frac{1}{2}(AA^{T})_{ii}(\gamma^{k}_{i})^{2}\geq 0.
    $$
     The result for the block $\mu$ is obtained using the same arguments except that there is no clipped updates. 
\end{proof}

\begin{lemma}
    There exists a subsequence $\{\theta^{k_{j}}\}$ of iterations generated by the generalized SMO where clipping does not take place.
\end{lemma}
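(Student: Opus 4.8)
The plan is to argue by contradiction. Assuming the algorithm produces an infinite sequence of iterates (if it stops after finitely many steps the statement is moot and the situation is handled directly in the proof of \cref{convergence_theorem}), suppose there is \emph{no} such subsequence, i.e. only finitely many iterations are unclipped, so that there is an $N$ with every update at $k\ge N$ clipped. I will contradict this using \cref{lemma_decrease} and \cref{prop_non_violating}.

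First I would restrict which blocks can be active once clipping always occurs. If block $\gamma$ is selected at an iteration $k$ of the main loop, then $\Delta^k=\Delta_3^k>\tau$, so the chosen index $u$ satisfies $\nabla_{\gamma_u}f(\theta^k)<-\tau<0$; since $(AA^T)_{uu}>0$ and $\gamma_u^k\ge 0$, the quantity $-\nabla_{\gamma_u}f(\theta^k)/(AA^T)_{uu}+\gamma_u^k$ is strictly positive, hence equal to the unconstrained minimizer, so no clipping occurs. Block $\mu$ updates are never clipped. Therefore, under the contradiction hypothesis, for all $k\ge N$ only blocks $\alpha$ and $\alpha^*$ are selected, and $\gamma^k,\mu^k$ stay frozen.

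Next I would use the decrease estimates obtained in the proof of \cref{lemma_decrease}: for a clipped step in block $\alpha$ (resp. $\alpha^*$) one has $f(\theta^k)-f(\theta^{k+1})\ge \tfrac12\Delta_1^k\,|t_k^*|>\tfrac{\tau}{2}\,|t_k^*|$ (resp. with $\Delta_2^k$), where $t_k^*$ is the step parameter of \cref{def_updates}. Since $\{f(\theta^k)\}$ is nonincreasing and bounded below by \cref{lemma_decrease}, summing over $k\ge N$ gives $\sum_{k\ge N}|t_k^*|<\infty$, hence $|t_k^*|\to 0$ and $\|\theta^{k+1}-\theta^k\|\le\sqrt2\,|t_k^*|\to 0$; in fact the consecutive differences are summable, so $\{\theta^k\}$ is a Cauchy sequence and converges to some $\bar\theta\in\mathcal F$.

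The heart of the argument, and the step I expect to be the main obstacle, is to derive a contradiction from $\Delta^k>\tau$ for every $k\ge N$ together with $\theta^k\to\bar\theta$. The difficulty is that $\Delta^k$ is not a continuous function of $\theta^k$: the sets $I_{\mathrm{up}},I_{\mathrm{low}}$ (and their starred analogues) can lose a coordinate that tends to a box bound from the interior, so a violation of size $>\tau$ at $\theta^k$ need not survive at $\bar\theta$ — a ``phantom'' violation can persist even when $\bar\theta$ satisfies the block-$\alpha$ and block-$\alpha^*$ optimality conditions. To handle this I would pass, by the pigeonhole principle over the finitely many blocks, most-violating pairs, orientations and clipping cases, to a subsequence along which the selected block, the pair $(i,j)$, the orientation, and the clipping type are all fixed; pass to the limit using that $\nabla f$ is affine (hence continuous) to get $\nabla_{\alpha_j}f(\bar\theta)-\nabla_{\alpha_i}f(\bar\theta)\ge\tau$, and use $|t_k^*|\to0$ to locate $\bar\theta$ precisely — the clipped coordinate sits exactly on its box bound ($0$ or $C/n$) at $\bar\theta$. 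Then, because that coordinate must later leave its bound in order for this pair (or another fixed pair) to be most-violating again infinitely often — which it is, by a further pigeonhole application together with \cref{prop_non_violating} — there is an intervening update pairing this coordinate with a new one whose limiting gradient $\nabla_v f(\bar\theta)$ is smaller by at least $\tau$. Iterating this ``migration of the violation'' and using that there are only finitely many coordinates, some coordinate must recur in the chain, forcing $\nabla_v f(\bar\theta)\le \nabla_v f(\bar\theta)-\tau$, an absurdity. This contradiction shows that clipping cannot occur at every iteration beyond any $N$; hence infinitely many iterations are unclipped, yielding the claimed subsequence.
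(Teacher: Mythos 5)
Your route is genuinely different from the paper's, and its first two stages are sound. The observation that a selected $\gamma$-variable can never produce a clipped update (because $\nabla_{\gamma_u}f(\theta^{k})<-\tau<0$ and $\gamma_u^{k}\geq 0$ make the unconstrained one-dimensional minimizer strictly positive), together with the fact that $\mu$-updates are never clipped, is correct and even cleaner than the paper's treatment of those blocks. Likewise, the estimate $f(\theta^{k})-f(\theta^{k+1})\geq\tfrac{1}{2}\Delta^{k}\,|t^{*}_{k}|>\tfrac{\tau}{2}|t^{*}_{k}|$ for clipped $\alpha$/$\alpha^{*}$ steps does follow from the computations in the proof of \cref{lemma_decrease}, so under your contradiction hypothesis $\sum_{k\geq N}|t^{*}_{k}|<\infty$, the iterates are Cauchy, and $\theta^{k}\to\bar\theta\in\mathcal{F}$. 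You also correctly identify that $\Delta^{k}$ is not continuous in $\theta^{k}$, so the limit alone does not finish the argument.

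The gap is in the ``migration'' step, which is asserted rather than proved, and it is precisely the nontrivial part. Concretely: after fixing the pair $(i,j)$, orientation and clipping type (say $t^{*}_{k}=\alpha_{j}^{k}$, so $\bar\alpha_{j}=0$), you obtain a replenishing donor $v$ with $\nabla_{\alpha_{j}}f(\bar\theta)\leq\nabla_{\alpha_{v}}f(\bar\theta)-\tau$ (note the gradient of the new coordinate is \emph{larger} by $\tau$ in this lower-bound channel, not smaller as you wrote). To iterate, you must show that $v$ itself is driven exactly onto a bound infinitely often and therefore also needs an intervening update of the opposite kind; this is not automatic, since the replenishing update could a priori be clipped on the recipient side, leaving $\alpha_{v}$ strictly positive forever, in which case your chain stops after one link and no contradiction follows. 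The missing ingredient is to use $t^{*}=\min(\alpha_{v},\tfrac{C}{n}-\alpha_{j})\to 0$ together with $\alpha_{j}^{k}\to 0$ (so $\tfrac{C}{n}-\alpha_{j}^{k}$ stays bounded away from $0$) to force the clip onto the donor side for all large $k$, pinning $\alpha_{v}$ at $0$; only then does the chain propagate, with limiting gradients increasing by at least $\tau$ at every step, so that finiteness of the coordinates yields the absurdity. Without this argument (and the analogous one for the upper-bound channel $\bar\alpha_{i}=\tfrac{C}{n}$), the final contradiction is a plan rather than a proof. For comparison, the paper's proof avoids the limit analysis entirely: it notes that a clipped update never decreases the number of coordinates sitting at their bounds, so this integer count stabilizes; after stabilization every clipped $\alpha$/$\alpha^{*}$ update merely permutes the current values among coordinates, so only finitely many iterates are possible, hence $\theta^{k_{1}}=\theta^{k_{2}}$ for some $k_{1}<k_{2}$, contradicting the strict decrease of $f(\theta^{k})$ guaranteed by \cref{lemma_decrease} and \cref{prop_non_violating}. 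Your approach can be completed, but as written the crucial step is missing; the paper's counting argument is shorter and sidesteps the discontinuity issue you flagged.
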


\begin{proof}
    Let's suppose the contrary, which means that there exists an iteration $K$ such that for all $k\geq K$ we only perform clipped updates. The number of variables $N_{B}^{k}$ that belong to the boundary of its contraints ($0$ or $\frac{C}{n}$ for the blocks $\alpha$ or $\alpha^{*}$ and $0$ for the block $\gamma$) is non-decreasing for all $k\geq K$ and it is bounded thus it must converge to another integer $N^{*}$. 
    
    This convergence implies that there exists $k^{*}$ such that for all $k\geq k^{*}$, $N_{B}^{k}=N^{*}$ since $N_{B}^{k}$ and $N^{*}$ are integers. This observation allows us to conclude that for all $k\geq k^{*}$ clipped updates only take place in the blocks $\alpha$ or $\alpha^{*}$ since the updates in the block $\gamma$ are made on only one variable and that the number of clipped variables has reached its maximum value. An update in the block $\gamma$ would strictly increase the number of clipped variables which is not possible for all $k\geq k^{*}$ or the update would not change the value of $\theta$ and we showed before that this situation is not possible (\cref{prop_non_violating}). 
    
    For all $k\geq k^{*}$, we have that updates in the block $\alpha$ (resp. $\alpha^{*}$) have this necessary scheme: $\alpha_{i}^{k}$ or $\alpha_{j}^{k}$ is equal to $0$ or $\frac{C}{n}$ thus after the update, one of them will leave the boundary and the other one goes to it in order to keep the number of clipped variables equals to $N^{*}$. The different possibilities are then the following:
    \begin{itemize}
        \item if $\alpha_{i}^{k}=0$ and $0<\alpha_{j}^{k}\leq \frac{C}{l}$ the only possible update following the \cref{def_updates} is
        $$ \alpha_{i}^{k+1}=\alpha_{i}^{k}+\alpha_{j}^{k}=\alpha_{j}^{k}$$
        $$
        \alpha_{j}^{k+1}=\alpha_{j}^{k}-\alpha_{j}^{k}=0.
        $$
        \item if $\alpha_{j}^{k}=\frac{C}{l}$ and $0\leq\alpha_{i}^{k}< \frac{C}{l}$ the only possible update following the \cref{def_updates} is
        $$ \alpha_{i}^{k+1}=\alpha_{i}^{k}+(\frac{C}{l}-\alpha_{i}^{k})=\frac{C}{l}$$
        $$
        \alpha_{j}^{k+1}=\alpha_{j}^{k}-(\frac{C}{l}-\alpha_{i}^{k})=\alpha_{i}^{k}.
        $$
    \end{itemize}
    It stays true for the block $\alpha^{*}$ and the discussion is similar.  
    It is clear that from the description of the updates made above that there is only a finite number of ways to shuffle the values which means that there exists $k_{1},k_{2} \geq k^{*}$ such as $\theta^{k_{1}}=\theta^{k_{2}}$ and with $k_{1}<k_{2}$. Therefore $f(\theta^{k_{1}})=f(\theta^{k_{2}})$ which contradicts the decrease of the sequence $f(\theta^{k})$ (\cref{lemma_decrease}).

\end{proof}

\begin{lemma}
    \label{delta_convergence}
    Let $\{\theta^{k_{j}}\}$ be a subsequence generated by the Generalized SMO algorithm where clipping does not take place. We then have that $\Delta^{k_{j}} \rightarrow 0$.
\end{lemma}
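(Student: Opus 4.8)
The plan is to convert the monotone convergence of $\{f(\theta^{k})\}$ established in \cref{lemma_decrease} into a quantitative control of $\Delta^{k_{j}}$, using the explicit expressions for the per-iteration decrease that were derived, in the non-clipped case, inside the proof of that lemma. Since $\{f(\theta^{k})\}$ is nonincreasing and converges to $\bar f$, the nonnegative differences $f(\theta^{k})-f(\theta^{k+1})$ tend to $0$; restricting to the subsequence indices $k_{j}$ at which no clipping occurs, we still have $f(\theta^{k_{j}})-f(\theta^{k_{j}+1})\to 0$.

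First I would fix the uniform constant
\[
M=\max\!\Bigl(\max_{i\neq j}\|X_{i:}-X_{j:}\|^{2},\ \max_{1\le i\le k_{1}}(AA^{T})_{ii},\ \max_{1\le i\le k_{2}}(\Gamma\Gamma^{T})_{ii}\Bigr),
\]
which is finite and strictly positive under the standing assumptions recalled in \cref{updates} (no repeated rows of $X$, no zero row of $A$ or $\Gamma$). Next I would split according to which block is selected at iteration $k_{j}$. If it is the $\alpha$ block, then $\Delta^{k_{j}}=\Delta_{1}^{k_{j}}$ and, because $t^{*}=t_{q}$ (no clipping), the computation in the proof of \cref{lemma_decrease} gives $f(\theta^{k_{j}})-f(\theta^{k_{j}+1})=(\Delta^{k_{j}})^{2}/(2\|X_{i:}-X_{j:}\|^{2})$; the $\alpha^{*}$ block is identical. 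If it is the $\gamma$ block, then for $\Delta^{k_{j}}=\Delta_{3}^{k_{j}}>0$ one has $\nabla_{\gamma_{s_{1}}}f(\theta^{k_{j}})=-\Delta^{k_{j}}$, and the non-clipped $\gamma$-update yields $f(\theta^{k_{j}})-f(\theta^{k_{j}+1})=(\Delta^{k_{j}})^{2}/(2(AA^{T})_{s_{1}s_{1}})$; similarly $|\nabla_{\mu_{s_{2}}}f(\theta^{k_{j}})|=\Delta^{k_{j}}$ gives $f(\theta^{k_{j}})-f(\theta^{k_{j}+1})=(\Delta^{k_{j}})^{2}/(2(\Gamma\Gamma^{T})_{s_{2}s_{2}})$ for the $\mu$ block (the cases $\Delta^{k_{j}}=0$ being trivial). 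In every case, bounding the denominator by $2M$ gives
\[
(\Delta^{k_{j}})^{2}\le 2M\,\bigl(f(\theta^{k_{j}})-f(\theta^{k_{j}+1})\bigr),
\]
and letting $j\to\infty$ forces $\Delta^{k_{j}}\to 0$.

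The only thing requiring care is bookkeeping rather than any genuine difficulty: one must match the definitions of the $\Delta_{\ell}^{k}$ (each carrying a $\max(\cdot,0)$) to the decrease formulas and their sign conventions, and verify that ``clipping does not take place'' is exactly the hypothesis that upgrades the inequalities of \cref{lemma_decrease} to the equalities used here. Conceptually this is the familiar ``sufficient decrease implies that the stationarity measure vanishes'' step for greedy coordinate descent, adapted to the four-block structure of \cref{dual_polyhedral_svr}; no idea beyond \cref{lemma_decrease} is needed.
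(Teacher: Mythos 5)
Your proposal is correct and follows essentially the same route as the paper: it converts the non-clipped sufficient-decrease formulas from \cref{lemma_decrease} plus the convergence of $\{f(\theta^{k})\}$ (hence $f(\theta^{k_{j}})-f(\theta^{k_{j}+1})\to 0$) into $(\Delta^{k_{j}})^{2}\le 2M\,(f(\theta^{k_{j}})-f(\theta^{k_{j}+1}))\to 0$. If anything, your uniform constant $M$ handles the $\gamma$ and $\mu$ blocks slightly more carefully than the paper, which drops the $(AA^{T})_{ii}$ and $(\Gamma\Gamma^{T})_{ii}$ factors in its stated bounds, but the argument is the same.
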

\begin{proof}
    We have that $f(\theta^{k_{j}})-f(\theta^{k_{j}+1})\geq \frac{(\nabla_{\alpha_{i}}f(\theta^{k_{j}})-\nabla_{\alpha_{j}}f(\theta^{k_{j}}))^{2}}{2D^{2}}= \frac{(\Delta^{k_{j}})^{2}}{2D^{2}}$ where $D=\underset{p,q}{\max} || X_{p:}-X_{q:}||$ when the update happens in the blocks $\alpha$ or $\alpha^{*}$. When it happens in the block $\gamma$ with no clipping we have the following inequality
    $f(\theta^{k_{j}})-f(\theta^{k_{j}+1})\geq \frac{(\nabla_{\gamma_{i}}f(\theta^{k_{j}}))^{2}}{2}=\frac{(-\Delta^{k_{j}})^{2}}{2} = \frac{(\Delta^{k_{j}})^{2}}{2}$. When the update takes place in the block $\mu$, we have that $f(\theta^{k_{j}})-f(\theta^{k_{j}+1})\geq \frac{(\nabla_{\mu_{i}}f(\theta^{k_{j}}))^{2}}{2})= \frac{(\Delta^{k_{j}})^{2}}{2}$. We then define a sequence   \[   
        u^{k_{j}} = 
             \begin{cases}
              \frac{1}{2D^{2}}(\Delta^{k_{j}})^{2} &\quad\text{if the update takes place in the blocks }\alpha \text{ or } \alpha^{*}.\\
              \frac{1}{2}(\Delta^{k_{j}})^{2} &\quad\text{if the update takes place in the blocks }\gamma \text{ or } \mu.\\
             \end{cases}
        \]

        The sequence $\{u^{k_{j}}\}\rightarrow 0$ because of the bound given above and the fact that $f(\theta^{k_{j}})-f(\theta^{k_{j}+1})\rightarrow 0$ too (\cref{lemma_decrease}). This implies that $\Delta^{k_{j}}\rightarrow 0$ as well. 
\end{proof}
A consequence of the lemma above is that $\Delta_{1}^{k_{j}}\rightarrow 0$, $\Delta_{2}^{k_{j}}\rightarrow 0$, $\Delta_{3}^{k_{j}}\rightarrow 0$ and $\Delta_{4}^{k_{j}}\rightarrow 0$ because $\Delta^{k_{j}}$ is defined as the maximum of those four positive values.
\begin{lemma}
    \label{lemma_bounded}
    Let $\{\theta^{k_{j}}\}$ be a subsequence generated by the generalized SMO algorithm where clipping does not take place. This subsequence is bounded.
\end{lemma}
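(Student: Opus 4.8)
The plan is to prove the stronger fact that the \emph{entire} sequence $\{\theta^k\}$ is bounded, which gives the stated claim a fortiori (so the hypothesis that clipping does not occur along $\{\theta^{k_j}\}$ will not even be needed). Split each iterate as $\theta^k=(\alpha^k,(\alpha^*)^k,\gamma^k,\mu^k)$. The $\alpha$- and $\alpha^*$-blocks are harmless: feasibility of the iterates forces $0\le\alpha_i^k,(\alpha_i^*)^k\le C/n$, so these two blocks always lie in the compact box $[0,C/n]^{2n}$. Everything therefore reduces to bounding the $\gamma$- and $\mu$-blocks, and the idea is to show that these blocks are modified only finitely often.

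The key observation is that every update the generalized SMO performs in the block $\gamma$ or $\mu$ decreases the objective by a fixed positive amount. Indeed, if at iteration $k$ the algorithm updates $\gamma_i$, then since we are inside the while-loop and the $\gamma$-block was selected, $\nabla_{\gamma_i}f(\theta^k)=-\Delta^k<-\tau<0$; hence the unconstrained update $\gamma_i^k-\nabla_{\gamma_i}f(\theta^k)/(AA^T)_{ii}$ strictly exceeds $\gamma_i^k\ge0$ and \emph{no clipping occurs}, so by \cref{lemma_decrease} one has $f(\theta^k)-f(\theta^{k+1})=(\nabla_{\gamma_i}f(\theta^k))^2/(2(AA^T)_{ii})>\tau^2/(2\max_j\|A_{j:}\|^2)>0$. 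Likewise, an update in the block $\mu$ at index $i$ is never clipped, satisfies $|\nabla_{\mu_i}f(\theta^k)|=\Delta^k>\tau$, and by \cref{lemma_decrease} decreases $f$ by more than $\tau^2/(2\max_j\|\Gamma_{j:}\|^2)>0$; the two constants are positive because, as observed in \cref{updates}, no row of $A$ or $\Gamma$ vanishes.

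Since $\{f(\theta^k)\}$ is nonincreasing and bounded below (\cref{lemma_decrease}), it can drop by at least $\min\bigl(\tau^2/(2\max_j\|A_{j:}\|^2),\,\tau^2/(2\max_j\|\Gamma_{j:}\|^2)\bigr)$ only finitely many times; hence there is an index $K$ beyond which every update is performed in the block $\alpha$ or $\alpha^*$. Such updates modify only the two selected coordinates among the $\alpha$'s (resp.\ $\alpha^*$'s) and leave $\gamma$ and $\mu$ untouched, so $\gamma^k=\gamma^K$ and $\mu^k=\mu^K$ for all $k\ge K$. Therefore $\{(\gamma^k,\mu^k):k\ge0\}$ is a finite set, hence bounded, and combining this with the box bounds on the $\alpha$- and $\alpha^*$-blocks shows that $\{\theta^k\}$, and in particular the subsequence $\{\theta^{k_j}\}$, is bounded. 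The only point requiring care — more a subtlety than an obstacle — is the remark that a $\gamma$-update triggered by a violation is automatically unclipped, which is precisely what makes the uniform per-step decrease available; clipped updates can occur only in the $\alpha/\alpha^*$ blocks, and those are exactly the updates that need not produce such a decrease, but they are irrelevant here because they keep $\alpha,\alpha^*$ inside a compact box.
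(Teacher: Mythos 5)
Your argument is correct, and it takes a genuinely different route from the paper's. The paper fixes a dual solution $\theta^{\text{opt}}$ and bounds $\|\theta^{k_j}-\theta^{\text{opt}}\|^2$ directly: for the unclipped $\mu$- (and $\gamma$-) updates along the subsequence it expands the squared distance, controls the cross term by Cauchy--Schwarz together with $\Delta_4^{k_j}\to 0$, controls the quadratic term through the per-step objective decrease, and then iterates the resulting inequality. You instead exploit the fixed tolerance $\tau>0$ of the algorithm: whenever the $\gamma$- or $\mu$-block is selected inside the loop, the chosen gradient coordinate has magnitude greater than $\tau$, the update is automatically unclipped (for $\gamma$ because the violating gradient is negative, for $\mu$ because the variable is unconstrained), and the identities from the proof of \cref{lemma_decrease} give a decrease of $f$ bounded below by a positive constant depending only on $\tau$, $A$ and $\Gamma$; since $\{f(\theta^k)\}$ is nonincreasing and bounded below, only finitely many such updates occur, so $\gamma^k$ and $\mu^k$ are eventually constant, and with the box constraints $0\le\alpha_i^k,(\alpha_i^*)^k\le C/n$ the \emph{whole} sequence $\{\theta^k\}$ is bounded, not just the unclipped subsequence. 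What each approach buys: yours is more elementary, needs neither the subsequence hypothesis nor a reference point $\theta^{\text{opt}}$, yields the stronger structural fact that for fixed $\tau$ the $\gamma$ and $\mu$ blocks are touched only finitely often, and sidesteps the delicate ``by recursion'' step of the paper's proof, which as written does not visibly accumulate the increments over successive updates; the paper's argument, in exchange, works only with quantities ($\Delta_4^{k_j}$ and the per-step decrease) that are reused in the neighbouring lemmas and does not lean on the uniform gap $\tau$, so it is the one that would survive an analysis in which $\tau$ is driven to zero. The one point to keep explicit in your write-up is that the lower bound on $f$ over $\mathcal{F}$ (equivalently, the existence of a dual optimum) is precisely what \cref{lemma_decrease} already invokes, so your appeal to it is legitimate within the paper's standing assumptions.
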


\begin{proof}
    To prove the statement, we will show that $||\theta^{k_{j}}-\theta^{\text{opt}}||^{2}$ is bounded where $\theta^{\text{opt}}$ belongs to the set of solution of \cref{dual_polyhedral_svr}. Since each $\alpha_{i}$ and $\alpha_{i}^{*}$ is belongs to $[0,\frac{C}{n}]$, we have that
    \begin{align*}
        ||\theta^{k_{j}+1}-\theta^{\text{opt}}||^{2} & = ||\alpha^{k_{j}+1}-\alpha^{\text{opt}}||^{2} + ||(\alpha^{*})^{k_{j}+1}-(\alpha^{*})^{\text{opt}}||^{2} + ||\gamma^{k_{j}+1}-\gamma^{\text{opt}}||^{2}\\ & + ||\mu^{k_{j}+1}-\mu^{\text{opt}}||^{2} \\
         & \leq \frac{2C^{2}}{n} + ||\gamma^{k_{j}+1}-\gamma^{\text{opt}}||^{2} + ||\mu^{k_{j}+1}-\mu^{\text{opt}}||^{2}.
    \end{align*}
    
    We will work on the bound for the quantity $||\mu^{k_{j}+1}-\mu^{\text{opt}}||^{2}$ first. If the update happens in the block $\mu$ at coordinate $\mu_{j}$, we have the following 
    \begin{align*}
    ||\mu^{k_{j}+1}-\mu^{\text{opt}}||^{2} & =  ||\mu^{k_{j}}-e_{j}\frac{\nabla_{\mu_{j}}f(\theta^{k_{j}})}{(\Gamma\Gamma^{T})_{jj}}-\mu^{\text{opt}}||^{2} \\
    & = ||\mu^{k_{j}}-\mu^{\text{opt}}||^{2} -2 \langle \mu^{k_{j}}-\mu^{\text{opt}}, e_{j}\frac{\nabla_{\mu_{j}}f(\theta^{k_{j}})}{(\Gamma\Gamma^{T})_{jj}} \rangle + ||e_{j}\frac{\nabla_{\mu_{j}}f(\theta^{k_{j}})}{(\Gamma\Gamma^{T})_{jj}} ||^{2}  \\
    & = ||\mu^{k_{j}}-\mu^{\text{opt}}||^{2} + \frac{\nabla_{\mu_{j}}f(\theta^{k_{j}})^{2}}{(\Gamma\Gamma^{T})_{jj}^{2}}-2 \frac{\nabla_{\mu_{j}}f(\theta^{k_{j}})}{(\Gamma\Gamma^{T})_{jj}}(\mu_{j}^{k_{j}}-\mu^{\text{opt}}_{j}).
    \end{align*}

    We then have that
    \begin{align*}
        -2 \frac{\nabla_{\mu_{j}}f(\theta^{k_{j}})}{(\Gamma\Gamma^{T})_{jj}}(\mu_{j}^{k_{j}}-\mu^{\text{opt}}_{j}) & = 2 (\mu^{k_{j}+1}_{j}-\mu^{k_{j}}_{j})(\mu_{j}^{k_{j}}-\mu^{\text{opt}}_{j}) \\
        & = 2\langle \mu^{k_{j}+1}-\mu^{k_{j}}, \mu^{k_{j}}-\mu^{\text{opt}} \rangle \\
        & \leq 2|| \mu^{k_{j}+1}-\mu^{k_{j}} || \cdot || \mu^{k_{j}}-\mu^{\text{opt}} || \\
        & \leq 2 \frac{|\nabla_{\mu_{j}}f(\theta^{k_{j}})|}{(\Gamma\Gamma^{T})_{jj}} || \mu^{k_{j}}-\mu^{\text{opt}} || \\
        & \leq 2 \frac{\Delta_{4}^{k_{j}}}{(\Gamma\Gamma^{T})_{jj}}|| \mu^{k_{j}}-\mu^{\text{opt}} || \\
    \end{align*}
    From \cref{delta_convergence}, we have that $\Delta_{4}^{k_{j}}\rightarrow 0$ then it can be bounded by a constant $M_{0}$. 
    We know from \cref{diff_obj} that $\frac{\nabla_{\mu_{j}}f(\theta^{k_{j}})^{2}}{(\Gamma\Gamma^{T})_{jj}^{2}} = \frac{2}{(\Gamma\Gamma^{T})_{jj}}(f(\theta^{k_{j}})-f(\theta^{k_{j}+1}))$. From \cref{lemma_decrease}, we know that $f(\theta^{k_{j}})-f(\theta^{k_{j}+1})\rightarrow 0$ then it can be bounded by a constant $M_{1}$. Overall we have that 
    \begin{equation*}
        ||\mu^{k_{j}+1}-\mu^{\text{opt}}||^{2}  \leq ||\mu^{k_{j}}-\mu^{\text{opt}}||^{2} + 2 \frac{M_{0}}{(\Gamma\Gamma^{T})_{jj}}|| \mu^{k_{j}}-\mu^{\text{opt}} || + \frac{2}{(\Gamma\Gamma^{T})_{jj}}M_{1}.
    \end{equation*}

    By recursion we have
    $$
    ||\mu^{k_{j}+1}-\mu^{\text{opt}}||^{2}  \leq ||\mu^{0}-\mu^{\text{opt}}||^{2} + 2 \frac{M_{0}}{(\Gamma\Gamma^{T})_{jj}}|| \mu^{0}-\mu^{\text{opt}} || + \frac{2}{(\Gamma\Gamma^{T})_{jj}}M_{1}<\infty.
    $$ 

    Since there is no clipped update on the subsequence $\{\theta^{k_{j}}\}$, the proof for the block $\gamma$ is similar which proves that $||\theta^{k_{j}}-\theta^{\text{opt}}||$ is bounded. 
\end{proof}

\begin{lemma}
    Let $\{\theta^{k_{j}}\}$ be a subsequence generated by the generalized SMO algorithm where clipping does not take place. There exists a sub-subsequence that converges to $\bar{\theta}$, with $\bar{\theta}$ being a solution of $\cref{dual_polyhedral_svr}$.   
\end{lemma}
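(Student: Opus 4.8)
The plan is to promote the asymptotic vanishing of the optimality score on the no-clipping subsequence into an exact optimality statement, by combining compactness with the continuity of $\nabla f$. First I would invoke \cref{lemma_bounded}: the sequence $\{\theta^{k_j}\}$ lies in a bounded subset of $\mathbb{R}^{2n+k_1+k_2}$, so by Bolzano--Weierstrass it admits a further subsequence $\{\theta^{k_{j_l}}\}$ converging to some $\bar\theta$. That $\bar\theta$ is feasible is immediate: every iterate of the Generalized SMO algorithm stays in $\mathcal F$ (this is exactly what the clipped updates of \cref{def_updates} enforce), and $\mathcal F$ is closed, being cut out by finitely many linear (in)equalities; a closed set contains the limits of its sequences, so $\bar\theta\in\mathcal F$.

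It then remains to check that $\bar\theta$ satisfies the optimality conditions of \cref{dual_polyhedral_svr} recorded block by block in \cref{SMO}. Since $f$ is a quadratic, $\nabla f$ is continuous, hence $\nabla f(\theta^{k_{j_l}})\to\nabla f(\bar\theta)$, and $\Delta_m^{k_{j_l}}\to 0$ for $m=1,2,3,4$ since this is a subsequence of $\Delta^{k_j}\to 0$ (\cref{delta_convergence}). For the $\mu$ and $\gamma$ blocks the scores $\Delta_4^k=\max_j|\nabla_{\mu_j}f(\theta^k)|$ and $\Delta_3^k=\max(-\min_j\nabla_{\gamma_j}f(\theta^k),0)$ are continuous functions of $\theta$, so letting $l\to\infty$ yields $\nabla_{\mu_j}f(\bar\theta)=0$ and $\nabla_{\gamma_j}f(\bar\theta)\ge 0$ for every $j$, which are the optimality conditions for those blocks. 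For the $\alpha$ block — and verbatim for $\alpha^*$ — I would argue by contradiction: if $\min_{i\in I_{\text{up}}(\bar\alpha)}\nabla_{\alpha_i}f(\bar\theta)<\max_{j\in I_{\text{low}}(\bar\alpha)}\nabla_{\alpha_j}f(\bar\theta)$, choose $\bar i\in I_{\text{up}}(\bar\alpha)$, $\bar j\in I_{\text{low}}(\bar\alpha)$ and $\rho>0$ such that $\nabla_{\alpha_{\bar j}}f(\bar\theta)-\nabla_{\alpha_{\bar i}}f(\bar\theta)>2\rho$; the memberships $\bar\alpha_{\bar i}<C/n$ and $\bar\alpha_{\bar j}>0$ are strict inequalities, so for $l$ large enough $\bar i\in I_{\text{up}}(\alpha^{k_{j_l}})$ and $\bar j\in I_{\text{low}}(\alpha^{k_{j_l}})$, and by continuity $\nabla_{\alpha_{\bar j}}f(\theta^{k_{j_l}})-\nabla_{\alpha_{\bar i}}f(\theta^{k_{j_l}})>\rho$; since $\Delta_1^{k_{j_l}}$ dominates every such gap over $I_{\text{up}}\times I_{\text{low}}$, this forces $\Delta_1^{k_{j_l}}>\rho$ for all large $l$, contradicting $\Delta_1^{k_{j_l}}\to 0$. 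Collecting the four blocks, $\bar\theta$ meets the optimality conditions of the convex program \cref{dual_polyhedral_svr}, hence is a solution of it.

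The main obstacle is the last step: the optimality score $\Delta$ is not a continuous function of $\theta$, because the index sets $I_{\text{up}},I_{\text{low}},I_{\text{up}}^{*},I_{\text{low}}^{*}$ can jump whenever a coordinate reaches a box endpoint, so one cannot simply pass to the limit in ``$\Delta^{k_{j_l}}\to 0\Rightarrow\Delta(\bar\theta)=0$''. The contradiction argument above is the device that handles this: the inequalities defining $I_{\text{up}}$ and $I_{\text{low}}$ are open on the side that matters, so a coordinate lying strictly inside its box at $\bar\theta$ stays strictly inside along the tail of the sub-subsequence, which is precisely what lets a violating pair at $\bar\theta$ be transported back to violating pairs at the $\theta^{k_{j_l}}$. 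Together with \cref{lemma_bound} and the monotone convergence of $\{f(\theta^k)\}$ from \cref{lemma_decrease}, this sub-subsequence then pins down the limit of the full sequence and completes \cref{convergence_theorem}.
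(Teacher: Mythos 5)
Your proposal is correct and follows essentially the same route as the paper: extract a convergent sub-subsequence by boundedness, get feasibility from closedness of $\mathcal{F}$, and rule out a violating pair at $\bar\theta$ for the $\alpha$ and $\alpha^*$ blocks by noting that strict box-interior membership (hence membership in $I_{\text{up}}$, $I_{\text{low}}$) persists along the tail, which together with continuity of $\nabla f$ contradicts $\Delta^{k_j}\to 0$. The only difference is cosmetic: for the $\gamma$ and $\mu$ blocks you pass to the limit directly using that their scores are continuous in $\theta$ (the index sets being fixed), where the paper runs the same contradiction argument as for the other blocks.
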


\begin{proof}
    From \cref{lemma_bounded}, we have that $\{\theta^{k_{j}}\}$ is a bounded sequence, it means that we can extract a converging subsequence that we will write $\{\theta^{k_{j}}\}$ not to complicate the notations. 
    Since $\mathcal{F}$ is closed, $\bar{\theta}$ meets the constraints of the dual optimization problem and belongs to $\mathcal{F}$. We now want to prove that it belongs to the set of solution of \cref{dual_polyhedral_svr} by showing that $\bar{\Delta}_{1}(\bar{\theta})\leq 0$, $\bar{\Delta}_{2}(\bar{\theta})\leq 0$, $\bar{\Delta}_{3}(\bar{\theta})\leq 0$ and $\bar{\Delta}_{4}(\bar{\theta})\leq 0$. Let's make two observations that will be used for the following proof.
    The first one comes from the continuity of the gradient which implies that for all $\epsilon$ there exists $K_{1}$ such that for all $k_{j}\geq K_{1}$,  $|\nabla_{i}f(\theta^{k_{j}})-\nabla_{i}f(\bar{\theta})|<\epsilon$ for all $i$. 
    The second observation is that it is possible too chose an $\epsilon$ small enough such that there exists  $K_{2}$ such that for all $k_{j}\geq K_{2}$: if $\bar{\alpha_{i}}>0$, we have $\alpha_{i}^{k_{j}}>0$ and if $\bar{\alpha_{i}}<\frac{C}{n}$ we have $\alpha_{i}^{k_{j}}<\frac{C}{n}$. In other words, we say that all the indices in the set $I_{\text{low}}(\bar{\alpha}) (\text{ resp. }I_{\text{up}})$ are also in $I_{\text{low}}(\alpha_{k_{j}}) (\text{ resp. }I_{\text{\text{up}}})$. The same argument holds for indices in the block $\alpha^{*}$. 

    Let's assume that $\bar{\Delta}_{1}>0$, it means that there exists at least one violating pair of variables that we will note $(\bar{i},\bar{j})$ at $\bar{\theta}$. From the discussion above, we know that $\bar{i}\in I_{\text{low}}$ for all $k_{j}\geq K_{2}$ and that $\bar{j}\in I_{\text{up}}$ for all $k_{j}\geq K_{2}$. We then have that for all $\epsilon>0$, there exists $K_{1}$ such as for all $k_{j}\geq \max(K_{1}, K_{2})$,
    \begin{align*}
        \Delta_{1}^{k_{j}} &=\underset{i\in I_{\text{up}}}{\min}\nabla_{i}f(\theta^{k_{j}})-\underset{i\in I_{\text{low}}}{\max}\nabla_{i}f(\theta^{k_{j}})\\
         & \geq \nabla_{\bar{i}}f(\theta^{k_{j}})-\nabla_{\bar{j}}f(\theta^{k_{j}}) \\
         & \geq (\nabla_{\bar{i}}f(\bar{\theta})-\epsilon)-(\nabla_{\bar{j}}f(\bar{\theta})+\epsilon) \\
         &= \bar{\Delta}_{1}-2\epsilon.  
        \end{align*}   
        
        We choose $\epsilon= \frac{\bar{\Delta}_{1}}{2}-\epsilon'$ where $0<\epsilon'<\frac{\bar{\Delta}_{1}}{2}$ which leads to :
        $$
        \Delta_{1}^{k_{j}}\geq \bar{\Delta}_{1}-2\epsilon'=2\epsilon' >0.
        $$
        This inequality is true for all $k_{j}\geq \max(K_{1},K_{2})$ which contradicts the fact that $\Delta_{1}^{k_{j}}\rightarrow 0$. 
        The proof is similar to show that $\bar{\Delta}_{2}\leq 0$. 

        Let's now suppose that $\bar{\Delta}_{3}> 0$ it means that there exists an index $\bar{i}$ such that $\nabla_{\gamma_{i}}f(\bar{\theta})<0$. For all $\epsilon>0$, there exists $K_{1}$, $K_{2}$ such as for all $k_{j}>\max(K_{1}, K_{2})$ 
        \begin{align*}
            \Delta_{3}^{k_{j}} &=-\underset{i\in \{1,\hdots,k_{1}\}}{\min}\nabla_{\gamma_{i}}f(\theta^{k_{j}})\\
             & \geq -\nabla_{\gamma_{\bar{i}}}f(\theta^{k_{j}}) \\
             & \geq -(\nabla_{\gamma_{\bar{i}}}f(\bar{\theta})+\epsilon) \\
             &= \bar{\Delta}_{3}-\epsilon.
            \end{align*}

            We choose $\epsilon= \bar{\Delta}_{3}-\epsilon'$ where $0<\epsilon'<\bar{\Delta}_{3}$ which leads to :
            $$
            \Delta_{3}^{k_{j}}\geq \bar{\Delta}_{3}-\epsilon=\epsilon' >0.
            $$
            This inequality is true for all $k_{j}\geq \max(K_{1},K_{2})$ which contradicts the fact that $\Delta_{3}^{k_{j}}\rightarrow 0$.

            Finally let's assume that $\Delta_{4}^{k_{j}}>0$, it means that $|\nabla_{\mu_{i}}f(\theta^{k_{j}})|\neq 0$.  
            Using the continuity of the gradient we write that for all $\epsilon>0$ there exists $K_{1}$ such that for all $k_{j}\geq K_{1}$ we have $|\nabla_{\mu_{i}}f(\theta^{k_{j}})-\nabla_{\mu_{i}}f(\bar{\theta}|<\epsilon$. Using triangle inequality we get that 
            $$
            \bigg | |\nabla_{\mu_{i}}f(\theta^{k_{j}})|-|\nabla_{\mu_{i}}f(\bar{\theta})|\bigg |\leq|\nabla_{\mu_{i}}f(\theta^{k_{j}})-\nabla_{\mu_{i}}f(\bar{\theta})|<\epsilon.
            $$
            Thus
            $$
            -\epsilon\leq |\nabla_{\mu_{i}}f(\theta^{k_{j}})|-|\nabla_{\mu_{i}}f(\bar{\theta})| \leq \epsilon, 
            $$

            which means that 
            $$
            |\nabla_{\mu_{i}}f(\bar{\theta})|-\epsilon\leq |\nabla_{\mu_{i}}f(\theta^{k_{j}})|.
            $$
            Then we have the following: 
            \begin{align*}
                \Delta_{4}^{k_{j}} &=\underset{i\in\{1,\hdots, k_{2}\}}{\max}|\nabla_{\mu_{i}}f(\theta^{k_{j}})|\\
                & \geq \nabla_{\mu_{\bar{i}}}f(\theta^{k_{j}})| \\
                 & \geq |\nabla_{\sigma}f(\bar{\theta})|-\epsilon \\
                 &= \bar{\Delta}_{4}-\epsilon.
                \end{align*}

            We choose $\epsilon= \bar{\Delta}_{4}-\epsilon'$ where $0<\epsilon'<\bar{\Delta}_{4}$ which leads to           
            $$ 
            \Delta_{4}^{k_{j}}\geq \bar{\Delta}_{4}-\epsilon=\epsilon' >0.
            $$
            This inequality is true for all $k_{j}\geq \max(K_{1},K_{2})$ which contradicts the fact that $\Delta_{4}^{k_{j}}\rightarrow 0$. 
\end{proof}

\begin{proof}
    We are now able to give the proof of the \cref{convergence_theorem}.
From \cref{lemma_bound}, we have that $\frac{1}{2}||\beta^{k}-\beta^{\text{opt}}||\leq f(\theta^{k})-f(\theta^{\text{opt}}).$ Moreover, from \cref{delta_convergence} we know that there is a subsequence $\{\theta^{k_{j}}\}$ generated by the Generalized SMO algorithm where clipping does not take place and that converges to $\bar{\theta}$, with $\bar{\theta}$ a solution of \cref{dual_polyhedral_svr}. 
The continuity of the objective function $f$ allows us to say that $f(\theta^{k_{j}})\rightarrow f(\bar{\theta})$. 
From \cref{lemma_decrease}, we know that $\{f(\theta^{k_{j}})\}$ is decreasing and bounded so the monotone convergence theorem implies that the whole sequence $f(\theta^{k})\rightarrow f(\bar{\theta})$ and it follows that $\frac{1}{2}||\beta^{k}-\beta^{\text{opt}}||\rightarrow 0$ and finally that $\beta^{k}\rightarrow \beta^{\text{opt}}.$ 
\end{proof}
\end{document}